\newtheorem{theorem}{Theorem}[section]
\newtheorem{lemma}[theorem]{Lemma}
\newtheorem{proposition}[theorem]{Proposition}
\theoremstyle{definition}
\newtheorem{definition}[theorem]{Definition}
\newtheorem{example}[theorem]{Example}
\theoremstyle{remark}
\newtheorem{remark}[theorem]{Remark}
\numberwithin{equation}{section}
\begin{document}

\setlength\parskip{0.5em plus 0.1em minus 0.2em}

\title[A Fourier--Mukai transform for $KR$-theory]{A Fourier--Mukai transform for $KR$-theory}
\author{David Baraglia}

\address{School of Mathematical Sciences, The University of Adelaide, Adelaide SA 5005, Australia}

\email{david.baraglia@adelaide.edu.au}


\date{\today}

\begin{abstract}

In complex K-theory, the Fourier--Mukai transform is an isomorphism between K-theory groups of a torus and its dual torus which is defined by pullback, tensoring by the Poincar\'e line bundle and pushforward. The Fourier--Mukai transform extends to families of dual tori provided one works with twisted K-theory. The Fourier--Mukai transform is then an isomorphism between twisted K-theory groups of $T$-dual torus bundles. In this paper we prove an extension of these results to twisted KR-theory. We introduce a notion of Real $T$-duality for torus bundles with Real structures and prove the existence of Real $T$-duals. We then define a Real Fourier--Mukai transform for Real $T$-dual torus bundles and prove that it is an isomorphism. Lastly, we consider an application of these results to the families index of Real Dirac operators which is relevant to Real Seiberg--Witten theory.

\end{abstract}

\maketitle




\section{Introduction}

The Fourier--Mukai transform in complex $K$-theory is a topological counterpart of the Fourier--Mukai transform in algebraic geometry. In its simplest form, the Fourier--Mukai transform is an isomorphism $\Phi : K^*(X) \to K^{*-dim(X)}( \widehat{X} )$ between the $K$-theories of dual tori and is given by pulling back to the product $X \times \widehat{X}$, tensoring by the Poincar\'e line bundle, and pushing forward to $\widehat{X}$. The Fourier--Mukai transform can be extended to families of dual tori $X \to B$, $\widehat{X} \to B$ over a common base $B$, except now it becomes necessary to make use of twisted $K$-theory. The necessity of using twisted $K$-theory arises because a twist on one side corresponds to a non-trivial Chern class on the dual side. The families $X$, $\widehat{X}$ are {\em topologically $T$-dual} in the sense of \cite{bem,bhm,bs,brs,bar1,bar2}. $T$-duality is a duality in string theory which roughly speaking, replaces a family of tori with a family of dual tori. D-branes in string theory have topological charges which are valued in complex $K$-theory \cite{mm,wit}, or more generally in twisted $K$-theory in the presence of a non-trivial $B$-field \cite{bm}. From this point of view, the Fourier---Mukai transform maps the charge of a D-brane to the charge of the corresponding $T$-dual D-brane.

For orientifold string theories, D-brane charges are valued in twisted $KR$-theory \cite{guk,hor}, hence it is natural to expect that both $T$-duality and the Fourier--Mukai transform extend to the setting of $KR$-theory. The main goal of this paper is to prove that such extensions exist and that the $KR$-theoretic Fourier--Mukai transform is an isomorphism. Some related results can be found in \cite{ddr1,ddr2,bar4}. Aside from the connection to string theory, a separate motivation for the $KR$-theoretic Fourier--Mukai transform comes from Seiberg--Witten theory. In \cite{bar4}, we made use of the $KR$-theoretic Fourer--Mukai transform in order to compute the mod $2$ Seiberg--Witten invariants of spin structures on $4$-manifolds. $KR$-theory on tori (or torus bundles) is also relevant for Real Seiberg--Witten theory \cite{bar5}.

Recall that to define the Real K-theory of a space $X$, one must equip $X$ with a {\em Real structure}, that is a homeomorphism $\sigma : X \to X$ satisfying $\sigma^2 = id_X$. To define twisted $KR$-theory, we must also equip $X$ with a (graded) {\em Real gerbe} $\mathcal{G}$. We recall the definition in Section \ref{sec:grg}. If $X$ is a torus, we say that a Real structure $\sigma$ on $X$ is {\em affine} if it is induced by an affine transformation on the universal cover of $X$. More generally, we consider Real affine torus bundles $X \to B$ (see \textsection \ref{sec:ratb}). This means that $X$ is a torus bundle over a base space $B$ whose transition maps are affine transformations and furthermore $X$ is equipped with a Real structure $\sigma_X$ which covers a Real structure $\sigma_B$ on $B$ and such that $\sigma_X$ acts affinely on the fibres. We also require that the gerbe $\mathcal{G}$ is an {\em affine gerbe} (see \textsection \ref{sec:ag}). This somewhat technical condition ensures that the pair $(X , \mathcal{G})$ admits a $T$-dual pair.

In order to define the $KR$-theoretic Fourier--Mukai transform we must first define what it means for two Real affine torus bundles $X$, $\widehat{X}$ to be Real $T$-duals. The precise definition will be given in Section \ref{sec:rtd}. Here we sketch the definition. Let $B$ be a topological space with a Real structure $\sigma_B$ ($B$ is always taken to be locally contractible and paracompact). Let $\pi : X \to B$, $\widehat{\pi} : \widehat{X} \to B$ be affine torus bundles over $B$ of the same rank, equipped with Real structures $\sigma_X$, $\sigma_{\widehat{X}}$ covering $\sigma_B$. Let $\mathcal{G}, \widehat{\mathcal{G}}$ be affine graded Real gerbes on $X, \widehat{X}$. Let $C = X \times_B \widehat{X}$ be the fibre product and $p : C \to X$, $\widehat{p} : C \to \widehat{X}$ the projections to $X$ and $\widehat{X}$. The pairs $(X , \mathcal{G})$, $(\widehat{X} , \widehat{\mathcal{G}})$ are said to be {\em Real $T$-duals} if there is an isomorphism 
\[
\gamma : p^*(\mathcal{G}) \to \widehat{p}^*(\widehat{\mathcal{G}})
\]
of graded Real gerbes which satisfies a condition which we call the {\em Poincar\'e property}. This property ensures that in a certain sense, the isomorphism $\gamma$ locally resembles tensoring by the Poincar\'e line bundle. In particular this property implies that monodromy representations of $X$ and $\widehat{X}$ are dual, as one would expect. Our first main result is the existence of Real $T$-duals. We give a simplified statement here, the more precise result is Theorem \ref{thm:rtd}.

\begin{theorem}\label{thm:1}
Let $X \to B$ be a Real affine torus bundle on $B$ and let $\mathcal{G}$ be an affine graded Real gerbe on $X$. Then there exists a Real $T$-dual pair $(\widehat{X} , \widehat{\mathcal{G}})$.
\end{theorem}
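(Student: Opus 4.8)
The plan is to construct $\widehat{X}$ and $\widehat{\mathcal{G}}$ explicitly from local data and then assemble the isomorphism $\gamma$ on the correspondence space, following the strategy used to establish existence of topological $T$-duals in the non-Real case, but carried out $\mathbb{Z}/2$-equivariantly. First I would choose a good cover $\{U_i\}$ of $B$ adapted to the Real structure: invariant under $\sigma_B$, with each $U_i$ and each intersection either $\sigma_B$-invariant and contractible onto a fixed point, or freely exchanged with a twin $U_{\overline{i}}$; such covers exist because $B$ is paracompact and locally contractible with an involution. Over each $U_i$ the affine torus bundle $X$ trivialises as $U_i \times T$ with $T = V/\Lambda$, and the \emph{affine gerbe} hypothesis (\textsection\ref{sec:ag}) guarantees that on these patches the graded Real gerbe $\mathcal{G}$ has Real local data in a normal form: trivial Dixmier--Douady class along the fibre $T$, a ``mixed'' component recording a homomorphism $\Lambda \to (\text{transition functions of a }\widehat{T}\text{-bundle})$, and a $B$-field-type component along $B$, all compatible with $\sigma$.

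Next I would \emph{define} $\widehat{X}$ by setting $\widehat{X}|_{U_i} = U_i \times \widehat{T}$ with $\widehat{T} = V^*/\Lambda^*$ the dual torus, glued over $U_i \cap U_j$ by the affine transformation whose linear part is the inverse transpose of that of $X$ and whose translation part is extracted from the mixed component of the Real local data of $\mathcal{G}$; the Real structure $\sigma_{\widehat{X}}$ is the dual of $\sigma_X$ twisted by the Real $B$-field data, arranged so that $\sigma_{\widehat{X}}^2 = \mathrm{id}$, $\sigma_{\widehat{X}}$ covers $\sigma_B$, and $\sigma_{\widehat{X}}$ is fibrewise affine. Dually, $\widehat{\mathcal{G}}$ is built from the complementary pieces of the local data of $\mathcal{G}$: its mixed component is the transition data of $X$ viewed as a $\widehat{\widehat{T}} = T$-bundle, and its $B$-field component is again read off from $\mathcal{G}$. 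One then checks that the resulting \v{C}ech cochains satisfy the graded Real gerbe cocycle conditions and that all structures are $\sigma$-compatible — this is where affineness and the $\sigma$-invariance of the normal form are used crucially — so that $(\widehat{X}, \widehat{\mathcal{G}})$ is a bona fide Real affine torus bundle with affine graded Real gerbe.

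Then, on $C = X \times_B \widehat{X}$, which over $U_i$ is $U_i \times T \times \widehat{T}$, I would define $\gamma$ fibrewise as tensoring by the Poincar\'e line bundle $\mathcal{P} \to T \times \widehat{T}$ equipped with its canonical Real structure: the duality pairing $T \times \widehat{T} \to U(1)$ intertwines the fibrewise involution induced by $(\sigma_X, \sigma_{\widehat{X}})$ with complex conjugation on $U(1)$, which is exactly what promotes $\mathcal{P}$ to a Real line bundle. The whole point of the construction of $\widehat{X}$ and $\widehat{\mathcal{G}}$ is that the discrepancy between these local $\mathcal{P}$-twists on overlaps is the coboundary realised by the chosen transition data, so the local isomorphisms patch to a global isomorphism $\gamma \colon p^*\mathcal{G} \to \widehat{p}^*\widehat{\mathcal{G}}$ of graded Real gerbes. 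Finally I would verify the \emph{Poincar\'e property}: by construction the restriction of $\gamma$ to any fibre $T \times \widehat{T}$ is literally the Poincar\'e-bundle twist, which is the defining local model, while the induced duality of monodromy representations of $X$ and $\widehat{X}$ follows at once from the inverse-transpose relation between their linear holonomies. This gives the precise statement, Theorem \ref{thm:rtd}, and hence Theorem \ref{thm:1}.

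The main obstacle I expect is the equivariant cocycle bookkeeping in the second step: one must simultaneously track the $U(1)$-valued \v{C}ech data of the graded Real gerbe and the affine transition data of the torus bundles across the three types of patches (invariant over a fixed point, invariant over a free orbit, and swapped twins), and show that the normal form furnished by the affine-gerbe hypothesis is preserved under every gluing and under $\sigma$. Equivalently, one is computing in Borel $\mathbb{Z}/2$-equivariant cohomology with the appropriate local coefficient systems (the fibrewise $H^1$ and $H^2$ of the torus), and the affineness hypothesis is precisely what forces the relevant obstruction class to vanish and singles out a canonical geometric dual rather than a non-geometric one; making this vanishing explicit and $\sigma$-natural is the technical heart of the argument.
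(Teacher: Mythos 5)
Your proposal takes a genuinely different route from the paper. You propose an explicit \v{C}ech-cocycle construction over an equivariant good cover, mirroring the classical (non-Real) existence proofs for topological $T$-duals: put $\mathcal{G}$ in a normal form, redistribute its mixed and $B$-field components to manufacture $\widehat{X}$ and $\widehat{\mathcal{G}}$, and patch Poincar\'e-bundle twists into a global isomorphism $\gamma$. The paper instead works sheaf-theoretically: it lifts the ungraded Dixmier--Douady class to $H^2_{\mathbb{Z}_2}(B;\mathcal{A}(X)_-)$, sets $\widehat{c} = \delta^{-1}\lambda(k)$, and then assembles $\widehat{\mathcal{G}}$ and $\gamma$ inside the Leray--Serre spectral sequence for $q\colon C\to B$ with coefficients in $\mathcal{C}_C(S^1)_-$, using Proposition~\ref{prop:cup} (the coboundary $H^2_{\mathbb{Z}_2}(B;(\Lambda_\rho)^*_-)\to H^4_{\mathbb{Z}_2}(B;\mathbb{Z}_-)$ equals $\langle c,-\rangle$) and a Real extension of the $d_2$-formula from \cite{bar2}. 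The cohomological route avoids choosing good $\mathbb{Z}_2$-covers altogether and so works at the full level of generality ($B$ paracompact and locally contractible), whereas your approach depends on the existence of equivariant covers in which every patch and intersection is either an invariant contractible set around a fixed point or one of a freely swapped twin pair; that is reasonable for compact smooth $(B,\sigma)$, but you assert it without justification for arbitrary spaces with involution.

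There is also a genuine gap that you yourself flag: the ``technical heart'' in your last paragraph is left entirely unproved. Concretely, when you try to build $\widehat{\mathcal{G}}$ as an affine gerbe on $\widehat{X}$ with prescribed linear part $\delta^t c$, the obstruction lives in $H^4_{\mathbb{Z}_2}(B;\mathbb{Z}_-)$ and is precisely the cross-term pairing $\langle c,\widehat{c}\,\rangle$, where $\widehat{c}=\delta^{-1}\lambda(k)$. In the paper this vanishing is a two-line consequence of the long exact sequence for $0\to\mathcal{C}_B(S^1)_-\to\mathcal{A}(X)_-\to(\Lambda_\rho)^*_-\to 0$ together with Proposition~\ref{prop:cup}: since $\delta\widehat{c}=\lambda(k)$ lies in the image of $\lambda$, the next coboundary kills it. Your \v{C}ech bookkeeping would need to reproduce exactly this cancellation (it is hidden in how the constant parts $u_{ijk}$ of $\theta_{ijk}$ satisfy a cocycle condition twisted by the translation data $t_{ij}$ of $X$ and the linear parts $\lambda_{ijk}$), and you do not supply that calculation. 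Without it, Step~2 of your plan (``One then checks that the resulting \v{C}ech cochains satisfy the graded Real gerbe cocycle conditions'') is an unverified assertion, and the patching of the local Poincar\'e twists in Step~3 depends on it. The overall plan is sensible and could be pushed through, but as written it is an outline that stops short of the substantive content.
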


Now let $(X , \mathcal{G})$, $(\widehat{X} , \widehat{\mathcal{G}})$ be Real $T$-dual pairs over $B$ and let $\gamma : p^*(\mathcal{G}) \to \widehat{p}^*(\widehat{\mathcal{G}})$ be an isomorphism satisfying the Poincar\'e property. The {\em Real Fourier--Mukai transform} (with respect to $(X , \mathcal{G} , \widehat{X} , \widehat{\mathcal{G}} , \gamma)$) is the map
\[
\Phi : KR^j(X , \mathcal{G} \otimes L(V) ) \to KR^{j-n}(\widehat{X} , \widehat{\mathcal{G}})
\]
(where $n$ is the dimension of the fibres of $X$ and $L(V)$ is the lifting gerbe of the vertical tangent bundle of $X \to B$, see \textsection \ref{sec:lg}) given by pulling back to $X \times_B \widehat{X}$, applying $\gamma$ and then pushing forward to $\widehat{X}$:
\[
\Phi = \widehat{p}_* \circ \gamma \circ p^*.
\]

Our second main result is that the Real Fourier--Mukai transform is an isomorphism.

\begin{theorem}\label{thm:2}
Assume that $B$ is a compact smooth manifold and $\sigma_B$ is a smooth involution. Then the Real Fourier--Mukai transform $\Phi : KR^*(X , \mathcal{G} \otimes L(V_\rho) ) \to KR^{*-n}(\widehat{X} , \widehat{\mathcal{G}})$ is an isomorphism for any Real $T$-dual pair $(X , \mathcal{G}), (\widehat{X} , \widehat{\mathcal{G}})$ and any isomorphism $\gamma : p^*(\mathcal{G}) \to \widehat{p}^*(\widehat{\mathcal{G}})$ satisfying the Poincar\'e property.
\end{theorem}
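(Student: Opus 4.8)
The plan is to prove that $\Phi$ is an isomorphism by a Mayer--Vietoris induction over the base $B$, reducing to a local model in which $\Phi$ becomes a Real refinement of the classical Fourier--Mukai isomorphism for dual tori.

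The first step is to check that $\Phi$ is compatible with restriction to $\sigma_B$-invariant open subsets of $B$ and with the associated $\mathbb Z/2$-equivariant Mayer--Vietoris sequences in twisted $KR$-theory. Pullback $p^*$ and the gerbe isomorphism $\gamma$ restrict on the nose, while the Gysin pushforward $\widehat p_* : KR^{*}(C, \widehat{p}^*\widehat{\mathcal G} \otimes p^* L(V_\rho)) \to KR^{*-n}(\widehat X, \widehat{\mathcal G})$ — which is fibre integration over the $n$-torus fibres of $\widehat p$, oriented using the Real $\mathrm{Spin}^c$-type data recorded by the twist $L(V_\rho)$ — is natural and commutes, up to the usual sign, with the Mayer--Vietoris connecting maps. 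Hence for an invariant cover $B = U \cup V$ the transforms over $U$, $V$, $U\cap V$ and $B$ assemble into a map of long exact sequences, and the five lemma reduces the theorem to the pieces.

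Because $B$ is a compact smooth manifold and $\sigma_B$ is a smooth involution, $B$ admits a finite $\sigma_B$-invariant open cover whose members and all of whose finite intersections are equivariantly contractible onto a single $\sigma_B$-orbit — a fixed point, or a free $\mathbb Z/2$-orbit; such a cover comes, for instance, from geodesically convex invariant balls for a $\sigma_B$-invariant metric, or from an equivariant triangulation. Using homotopy invariance of twisted $KR$-theory together with the classification of Real affine torus bundles and affine graded Real gerbes established earlier in the paper, the restriction of $(X, \mathcal G, \widehat X, \widehat{\mathcal G}, \gamma)$ to such an open set may be replaced by its restriction to the orbit. Iterating the Mayer--Vietoris step therefore reduces the theorem to two cases: $B = \mathrm{pt}$ with the trivial involution, and $B = \mathbb Z/2$ with the free involution. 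In the free case, twisted $KR$-theory of a free involution is twisted complex $K$-theory of the quotient, and $\Phi$ is the classical topological Fourier--Mukai transform of \cite{bhm,bs} for dual tori, which is an isomorphism. In the fixed case I would view $X$ as an iterated affine Real circle bundle over lower-dimensional affine Real tori (a $\sigma$-invariant primitive rank-one summand of the lattice always exists) and induct on the rank $n$ by a Künneth and naturality argument, so that the essential case is $n = 1$: a single Real circle and its dual, where one checks directly — using known computations of twisted $KR$-theory of $S^{1,1}$ and of the circle with a free involution — that push--pull across the Poincar\'e gerbe induces the asserted isomorphism $KR^{j}(X, \mathcal G \otimes L(V_\rho)) \xrightarrow{\cong} KR^{j-1}(\widehat X, \widehat{\mathcal G})$.

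The main obstacle is precisely this $n = 1$ computation. In ordinary $K$-theory the analogous statement is essentially immediate, but here all of the genuinely Real phenomena — the role of the lifting gerbe $L(V_\rho)$ of the vertical tangent bundle, the grading on the Real gerbe $\mathcal G$, and the distinct affine Real structures a circle can carry (with or without fixed points, with $A = \pm 1$) — must be tracked at once; pinning down the $KR$-orientation of $\widehat p$, hence the degree shift, and matching the explicit push--pull map with an abstract isomorphism of the relevant twisted $KR$-groups on the two dual circles is where the argument has real content. A secondary technical point is the sign-compatibility of the Gysin pushforward with the equivariant Mayer--Vietoris connecting homomorphisms in the twisted Real setting, and the construction of a sufficiently fine invariant cover. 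One could alternatively try to run the argument through the equivariant Leray--Serre spectral sequence of $X \to B$, where $\Phi$ would induce the classical Fourier--Mukai isomorphism on fibrewise cohomology, but the Mayer--Vietoris route keeps the bookkeeping of the Real structures more transparent.
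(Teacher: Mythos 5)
Your broad outline — a Mayer--Vietoris induction over $B$, reduction to the point, and then a direct torus calculation — does match the skeleton of the paper's argument, but there are two genuine gaps, one of which is structural.

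\textbf{The indecomposable rank-two torus cannot be split.} Your plan to reduce the point case to $n=1$ rests on the assertion that a $\sigma$-invariant primitive rank-one summand of $\Lambda$ always exists, so that $X$ peels off a Real circle factor and a K\"unneth argument carries the induction. This is correct for $\mathbb Z$-module summands but not for $\mathbb Z[\mathbb Z/2]$-module summands, and the latter is what a product decomposition of the Real torus requires. The regular representation $R = \mathbb Z^2$ with $\sigma(a,b) = (b,a)$ is an indecomposable $\mathbb Z[\mathbb Z/2]$-module: $(1,1)$ generates a $\sigma$-invariant rank-one primitive sublattice, but $\mathbb Z(1,1)\oplus\mathbb Z(1,-1)$ is an index-$2$ sublattice of $\mathbb Z^2$, so $\mathbb Z(1,1)$ is not a direct summand in the equivariant sense. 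Consequently the Real torus $\mathbb R^2/\mathbb Z^2$ with the swap involution does \emph{not} factor as a product of Real circles, and no ``K\"unneth and naturality argument'' will reduce its Fourier--Mukai transform to the circle case. The paper's Section \ref{sec:gentor} establishes a factorization lemma for the Fourier--Mukai transform (Proposition \ref{prop:tdfac}, Lemma \ref{lem:fmfactor}) that requires exactly this equivariant product structure, and then Section \ref{sec:indtor} treats the five indecomposable types separately — one of which is this rank-two torus (type $T_5$), handled by a cellular long exact sequence and a comparison with complex $K$-theory, not by reduction to circles. Viewing $X$ as a circle bundle over a lower-dimensional torus (rather than a product) does not rescue the induction either, since the dual $\widehat X$ then fibers over the dual of the \emph{fiber} circle, not over the same base, so the two sides have no common base over which to run a fiberwise Fourier--Mukai transform.

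\textbf{Compatibility of the pushforward with Mayer--Vietoris is not automatic.} You assert that $\widehat p_*$ ``is natural and commutes, up to the usual sign, with the Mayer--Vietoris connecting maps,'' but this is precisely the nontrivial technical input. For a general Mayer--Vietoris sequence in twisted $KR$-theory there is no a priori reason the Gysin pushforward should intertwine the connecting homomorphisms. The paper gets around this by working only with a special class of Mayer--Vietoris sequences arising from collared decompositions $B = B_-\cup_Y B_+$, for which the connecting map takes the explicit push--pull form $\delta = \iota_*\circ i_Y^*$ (Proposition \ref{prop:mv}); with $\delta$ in this form, commutativity with $\widehat p_*$ follows from base change, but one must first justify that this choice of $\delta$ fits into a long exact sequence. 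Your sketch omits this and also leans on the existence of an equivariant good cover (all sets and all intersections equivariantly contractible to single orbits), whose construction for an arbitrary smooth involution is itself a nontrivial claim; the paper instead uses a Morse function on the quotient together with tubular neighbourhoods of the fixed-point set, handling the trivial-involution, free, and general cases in separate lemmas. Finally, even in the $n=1$ case your dichotomy ``circle with fixed points versus circle without'' undercounts: the gerbe data matters, and there are four rank-one indecomposable types in the paper's list (the reflection circle appears twice, once with trivial gerbe and once with a nontrivial affine Real gerbe distinguishing the two fixed points), each requiring its own computation.
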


The proof of Theorem \ref{thm:2} involves several steps. First a Mayer--Vietoris argument is used to reduce to the case that $B$ is a point. However for this to work one needs a compatibility between the Mayer--Vietorus sequence and the Fourier--Mukai transform. The key technical result used here is Proposition \ref{prop:mv}. Having reduced to the case where $B$ is a point we now have that $X$ and $\widehat{X}$ are dual tori. In contrast to complex $K$-theory, it is still not straightforward to show that the Fourier--Mukai transform is an isomorphism even in this case. In the complex case, the complex $K$-theory of a torus has no torsion, so one can pass to cohomology and deduce that the Fourier--Mukai transform is an isomorphism by a direct calculation. In the case of $KR$-theory there is torsion, so such a simplification is not possible. Another difference between the complex case is that there are non-trivial Real affine gerbes even when $B$ is a point. To prove Theorem \ref{thm:2} in the case that $B$ is a point, we first show that any Real affine torus equipped with a Real affine gerbe can be factored into a product of indecomposable tori. We further show that the Fourier--Mukai transform factors into a product of partial Fourier--Mukai transforms, one for each indecomposable factor. Then by another Mayer--Vietoris argument we are able to reduce to the case that $X$ is an indecomposable Real affine torus. It turns out that there are exactly five isomorphism classes of such tori. We show by a direct calculation that the Real Fourier--Mukai transform is an isomorphism in each of the five cases and from this Theorem \ref{thm:2} follows.

In the final section of the paper, we consider an application of the Real Fourier--Mukai transform to the families index of Real Dirac operators. Suppose $X$ is a compact smooth $n$-manifold and $\sigma_X$ is a smooth, orientation preserving Real structure on $X$. Let $\mathfrak{s}$ be a spin$^c$-structure on $X$ with corresponding $Spin^c(n)$-bundle $P \to X$. A {\em Real structure} on $\mathfrak{s}$ is a lift of $\sigma_X$ to an involution $\sigma_P : P \to P$ such that $\sigma_P(pg) = \sigma_P(p) c(g)$ for all $p \in P$, $g \in Spin^c(n)$, where $c$ is the involution on $Spin^c(n) = S^1 \times_{\pm 1} Spin(n)$ which is trivial on $Spin(n)$ and is complex conjugation on $S^1$. In such a situation the Real structure on $\mathfrak{s}$ promotes the index of the spin$^c$-Dirac operator $D$ from a class $ind(D) \in K^{-n}(pt)$ in complex $K$-theory to a class $ind_R(D) \in KR^{-n}(pt)$ in $KR$-theory. Extending this, we define the notion of a {\em Real spin$^c$-structure of type $k$}, where $k \in \mathbb{Z}/4\mathbb{Z}$. A Real spin$^c$-structure in the sense described above is the case $k = 0$. The cases $k \neq 0$ correspond to allowing $\sigma_X$ to be orientation reversing and also by considering lifts which square to $+1$ or $-1$. More precisely, if $\mathfrak{s}$ is a spin$^c$-structure on $X$, then a Real structure on $\mathfrak{s}$ of type $k$ is defined as follows. First we require that $\sigma_X$ changes the orientation on $X$ by $(-1)^k$. Second, we require that $\sigma_X$ lifts to a map $\sigma_{P'} : P' \to P'$, where $P'$ is the principal $Pin(n)$-bundle corresponding to $\mathfrak{s}$, for which $\sigma_{P'}(pg) = \sigma_{P'}(p) c(g)$ for all $p \in P'$, $g \in Pin(n)$ and $\sigma_{P'}^2 = (-1)^{\binom{k+1}{2}}$ (note that these two conditions fix the value of $k$ mod $4$). We then show that for a Real spin$^c$-structure of type $k$, the index of the Dirac operator lifts to a class $ind_R(D) \in KR^{2k-n}(pt)$. This leads to the following conclusion:

\begin{proposition}
Let $X$ be a compact smooth $n$-manifold with smooth Real structure $\sigma_X$. Let $\mathfrak{s}$ be a spin$^c$-structure and let $ind(D) \in \mathbb{Z}$ denote the index of the spin$^c$-Dirac operator associated to $\mathfrak{s}$. Suppose that $\mathfrak{s}$ admits the structure of a Real spin$^c$-structure of type $k \in \mathbb{Z}/4\mathbb{Z}$.
\begin{itemize}
\item[(1)]{If $n = 2k+4 \; ({\rm mod} \; 8)$ then $ind(D)$ is even.}
\item[(2)]{If $n = 2k \pm 2 \; ({\rm mod} \; 8)$ then $ind(D) = 0$.}
\end{itemize}
\end{proposition}

So far we only considered the index of a single Dirac operator. However, by coupling the Dirac operator to flat line bundles, we get a family of Dirac operators parametrised by the Jacobian $Jac(X) = H^1(X ; \mathbb{R})/H^1(X ; \mathbb{Z})$. The families index of the Dirac operator is now a $K$-theory class on the Jacobian $ind(D) \in K^{-n}(Jac(X))$. This families index is of particular interest for $3$- and $4$-manifolds because it appears in the context of Seiberg--Witten theory. If $\mathfrak{s}$ is a Real spin$^c$-structure on $X$ of type $k$, then we show that the families index can be promoted to a $KR$-theory class $ind_R(D) \in KR^{2k-n}(Jac(X))$. The dual torus to $Jac(X)$ is called the {\em Albanese torus} of $X$, denoted $Alb(X)$. It comes with a natural map $a : X \to Alb(X)$ (defined up to homotopy) with the property that $a^* : H^1(Alb(X) ; \mathbb{Z}) \to H^1(X ; \mathbb{Z})$ is an isomorphism (see Section \ref{sec:indr} for details of its construction). Assuming $\sigma_X$ has a fixed point, it is possible to define a linear Real structure on $Alb(X)$ for which $a$ is equivariant. Our last result expresses the families index $ind_R(D)$ in terms of the Real--Fourier Mukai transform
\[
\Phi : KR^*( Alb(X) ) \to KR^{*-\beta}( Jac(X) )
\]
where $\beta = b_+ - b_+$ and $b_{\pm}$ is the dimension of the $\pm 1$-eigenspace of $\sigma_X$ acting on $H^1(X ; \mathbb{R})$.

\begin{proposition}
Assume $\sigma_X$ has a fixed point. Then we have an equality
\[
ind_R(D) = \Phi( \alpha )
\]
where
\[
\alpha = a_*(1) \in KR^{2k-n-\beta}(Alb(X))
\]
is the pushforward of $1 \in KR^0(X)$ under the Albanese map $a : X \to Alb(X)$.
\end{proposition}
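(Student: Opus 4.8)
The plan is to write both $ind_R(D)$ and $\Phi(\alpha)$ as one and the same Gysin pushforward in twisted $KR$-theory, and then to identify them by base change and the projection formula.

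The first step is to recall from Section~\ref{sec:indr} the construction of $ind_R(D)$. Over $Jac(X)$ one has the trivial bundle $r : X \times Jac(X) \to Jac(X)$ carrying the canonical Real Poincar\'e line bundle $\mathcal{P}$, whose restriction to $X \times \{L\}$ is the flat line bundle on $X$ classified by $L$ and whose Real structure is built from $\sigma_X$ together with the induced involution on $Jac(X)$. Coupling the fibrewise spin$^c$-Dirac operator of $\mathfrak{s}$ to $\mathcal{P}$ produces a family of Real elliptic operators over $Jac(X)$ whose families index is $ind_R(D)$. A Real spin$^c$-structure of type $k$ makes $r$ a $KR$-oriented map with degree shift $2k-n$, so the Real families index theorem (equivalently, the definition of the pushforward used there) gives $ind_R(D) = r_*([\mathcal{P}])$.

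The second step is to unwind $\Phi(\alpha)$. Writing $C = Alb(X) \times Jac(X)$, $p : C \to Alb(X)$, $\widehat{p} : C \to Jac(X)$, and recalling that $\gamma$ implements tensoring with the Poincar\'e line bundle $\mathcal{P}_{Alb}$ on $C$, we have $\Phi(\alpha) = \widehat{p}_*\big(\mathcal{P}_{Alb}\cdot p^*a_*(1)\big)$. The square with corners $X\times Jac(X)$, $X$, $C$, $Alb(X)$ and maps the projection $q$, $a\times\mathrm{id}$, $p$, $a$ is Cartesian, so base change yields $p^*a_*(1) = (a\times\mathrm{id})_*(1)$, and the projection formula rewrites $\mathcal{P}_{Alb}\cdot(a\times\mathrm{id})_*(1)$ as $(a\times\mathrm{id})_*\big((a\times\mathrm{id})^*\mathcal{P}_{Alb}\big)$. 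Because the Albanese map induces an isomorphism $a^* : H^1(Alb(X);\mathbb{Z}) \to H^1(X;\mathbb{Z})$ and, by the fixed-point hypothesis, is equivariant for the chosen linear Real structure on $Alb(X)$, the pullback $(a\times\mathrm{id})^*\mathcal{P}_{Alb}$ is again a universal flat line bundle on $X\times Jac(X)$ with compatible Real structure, hence isomorphic to $\mathcal{P}$. Finally $\widehat{p}\circ(a\times\mathrm{id}) = r$, so by functoriality of Gysin maps
\[
\Phi(\alpha) = \widehat{p}_*\big((a\times\mathrm{id})_*([\mathcal{P}])\big) = r_*([\mathcal{P}]) = ind_R(D).
\]

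The step I expect to be the main obstacle is the bookkeeping of $KR$-orientations, gerbe twists and degree shifts, which is needed for the Gysin maps above to compose on the nose and for the displayed degrees to be consistent. One has to check that the $KR$-orientation of $r$ coming from $\mathfrak{s}$ is the composite of the $KR$-orientation of $a$ --- which exists only after twisting by the lifting gerbe of the vertical tangent bundle, and which is what places $\alpha$ in $KR^{2k-n-\beta}(Alb(X))$ with the twist $\mathcal{G}\otimes L(V_\rho)$ --- with the $KR$-orientation of $\widehat{p}$ used in defining $\Phi$; the gerbe $L(V_\rho)$ is precisely the correction absorbing the discrepancy. One also needs base change and the projection formula in the Real, twisted setting, the compatibility of the Real structures on $X$, $Jac(X)$ and $Alb(X)$ under $a\times\mathrm{id}$, and the agreement of the Real structure on $\mathcal{P}$ with the one encoded by $\gamma$. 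This is where the hypothesis that $\sigma_X$ has a fixed point enters: it produces a linear (rather than merely affine) Real structure on $Alb(X)$ making $a$ equivariant, so that $a_*$ is a morphism of $KR$-groups in the first place. Once these compatibilities are arranged, the chain of equalities above is forced.
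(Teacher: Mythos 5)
Your argument is the paper's own proof written in reverse order: both proceed by (i) identifying $\pi_*\,(\mathcal{P}_X)$ with $\widehat p_*\circ(a\times\mathrm{id})_*$ applied to $(a\times\mathrm{id})^*\mathcal{P}$ via functoriality of the Gysin map through the commuting triangle $X\times T_X\to A_X\times T_X\to T_X$, (ii) the projection formula, and (iii) base change over the Cartesian square $X\times T_X\to C\to A_X$, with the fixed-point hypothesis entering exactly as you say, to make $a$ equivariant. The only cosmetic difference is that you frame the identification $(a\times\mathrm{id})^*\mathcal{P}_{Alb}\cong\mathcal{P}_X$ as something to be checked, whereas in the paper it is the \emph{definition} of $\mathcal{P}_X$, so no argument is needed there.
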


\subsection{Structure of the paper}

The structure is the paper is as follows. In Section \ref{sec:grg} we recall the definition and main properties of graded Real gerbes. In Section \ref{sec:tkr} we recall the main properties of twisted $KR$-theory that will be needed. In particular we prove a compatibillity bewtween the Mayer--Vietoris sequence and push-forward maps (Proposition \ref{prop:mv}). Section \ref{sec:rtd} is concerned with Real $T$-duality. We introduce the notion of Real affine torus bundles and affine Real gerbes. We then define Real $T$-duality and prove the existence of Real $T$-duals. In Section \ref{sec:rfmt} we introduce the Real Fourier--Mukai transform and prove that it is an isomorphism. The proof is broken into several steps. In Section \ref{sec:rtdpt} we reduce to the case that the base is a point. In Section \ref{sec:gentor} we reduce to the case of indecomposable tori. Finally in Section \ref{sec:indtor} we prove that the Real Fourer--Mukai transform is an isomorphism for indecomposable tori. In Section \ref{sec:indr} we consider an application of the Real Fourier--Mukai transform to the families index of Real Dirac operators.

\noindent{\bf Acknowledgments.} The author was financially supported by an Australian Research Council Future Fellowship, FT230100092.

\section{Graded Real gerbes}\label{sec:grg}

In this section we recall the basic definitions and properties of graded Real gerbes. We use the language of bundle gerbes, following \cite{hmsv}, see also \cite{mur, ste, bar2} for the case of complex bundle gerbes. Useful additional references are \cite{fht1} for graded complex gerbes and \cite{fm, gom} for the Real case.

Let $X$ be a topological space, always assumed to be locally contractible and paracompact. A {\em Real structure} on $X$ is a continuous involution $\sigma : X \to X$. By a {\em covering} of $X$ we mean a topological space $Y$ and a continuous map $\pi : Y \to X$ such that $\pi$ admits local sections. Suppose $\sigma_X$ is a Real structure on $X$. A {\em Real covering} is a covering $\pi : Y \to X$ and a Real structure $\sigma_Y$ on $Y$ such that $\pi \circ \sigma_Y = \sigma_X \circ \pi$. A {\em refinement} of a covering $\pi : Y \to X$ is a covering $\pi' : Y' \to X$ together with a continuous map $r : Y' \to Y$ such that $\pi' = \pi \circ r$. There is an obvious notion of Real refinements of Real coverings.

\begin{example}
If $\{ U_i \}_{i \in I}$ is an open cover of $X$ then $Y = \coprod_{i \in I} U_i$ with the obvious map $Y \to X$ is a covering. Let $\sigma$ be a Real structure on $X$. An {\em equivariant open cover} on $X$ is an open cover $\mathcal{U} = \{ U_i \}_{i \in I}$ together with an involution $\sigma : I \to I$ on the indexing set $I$ such that $\sigma(U_i) = U_{\sigma i}$ for all $i \in I$. Any equivariant open cover can be refined to one for which $\sigma$ acts freely on $I$. We will usually assume that our open covers are chosen to satisfy this property. We use the notation $U_{i_1 i_2 \cdots i_k} = U_{i_1} \cap \cdots \cap U_{i_k}$ for multiple intersections. Notice that $\sigma(U_{i_1 \cdots i_k}) = U_{\sigma(i_1) \cdots \sigma(i_k)}$.
\end{example}

Given a covering $\pi : Y \to X$, let $Y^{[p]}$ denote the $p$-fold fibre product $Y \times_X Y \times_X \cdots \times_X Y$. The collection $\{ Y^{[p]} \}_{p \ge 1}$ defines a simplicial space whose face maps $\pi_i : Y^{[p]} \to Y^{[p-1]}$ are given by omitting the $i$-th entry. If $\sigma$ is a Real structure on $X$ and $\pi : Y \to X$ is a Real covering, then each of the spaces $Y^{[p]}$ inherits a Real structure and the face maps respect the Real structures.

We use the term ``line bundle" to refer to unitary complex line bundles and the term ``isomorphism" of line bundles will refer to $\mathbb{C}$-linear unitary isomorphism. An ``antilinear isomorphism" means a $\mathbb{C}$-antilinear unitary isomorphism. A {\em Real structure} on a line bundle $L \to X$ is a lift $\sigma : L \to L$ of $\sigma$ to an involution on $L$ which is an antilinear involution. A Real line bundle is a line bundle equipped with a Real structure. Isomorphism classes of Real line bundles are classified by the equivariant cohomology group $H^2_{\mathbb{Z}_2}(X ; \mathbb{Z}_-)$, where $\mathbb{Z}_-$ denotes the equivariant local system which is just the trivial local system $\mathbb{Z}$, but where $\sigma$ acts as multiplication by $-1$. A {\em graded Real line bundle} is a Real line bundle $L \to X$ together with a continuously varying $\mathbb{Z}_2$-grading on the fibres of $L$ such that the Real structure is grading preserving. Such a grading is specified by a $\sigma$-invariant function $\epsilon : X \to \mathbb{Z}_2$, hence graded Real line bundles are classified by the group $H^0_{\mathbb{Z}_2}(X ; \mathbb{Z}_2) \oplus H^2_{\mathbb{Z}_2}(X ; \mathbb{Z}_-)$. We define the {\em graded Real Picard group} of $X$, $GrPic_R(X)$ to be the group of isomorphism class of graded Real line bundles on $X$, where the group operation is the graded tensor product. Hence
\[
GrPic_R(X) \cong H^0_{\mathbb{Z}_2}(X ; \mathbb{Z}_2) \oplus H^2_{\mathbb{Z}_2}(X  ; \mathbb{Z}_-).
\]

Let $\pi : Y \to X$ be a Real cover of $X$. A {\em graded Real gerbe} on $X$ (with respect to the cover $Y$) is a pair $\mathcal{G} = (L , \lambda)$, where $L \to Y^{[2]}$ is a graded Real line bundle and $\lambda$ is an isomorphism $\lambda : \pi_3^*(L) \otimes \pi_1^*(L) \to \pi_2^*(L)$ which is required to be associative in the sense that the following diagram commutes:
\[
\xymatrix{
L_{12} \otimes L_{23} \otimes L_{34} \ar[r]^-{\lambda_{123} } \ar[d]^-{\lambda_{234}} & L_{13} \otimes L_{34} \ar[d]^-{\lambda_{134}} \\
L_{12} \otimes L_{24} \ar[r]^-{\lambda_{124}} & L_{14}
}
\]
In this diagram $L_{ij}$ denotes the pullback of $L \to Y^{[2]}$ under the map $Y^{[4]} \to Y^{[2]}$ which sends $(y_1,y_2,y_3,y_4)$ to $(y_i,y_j)$ and $\lambda_{ijk}$ is defined as the pullback of $\lambda$ under the map $Y^{[4]} \to Y^{[3]}$ which sends $(y_1,y_2,y_3,y_4)$ to $(y_i,y_j,y_k)$.

\begin{remark}
In this definition we have chosen to describe gerbes in terms of line bundles. One could just as easily define gerbes in terms of principal circle bundles and sometimes it more convenient to do so. We will switch between the two points of view without further mention.
\end{remark}

A {\em strict isomorphism} $\varphi : \mathcal{G} \to \mathcal{G}'$ of graded Real bundle gerbes $\mathcal{G} = (L , \lambda)$, $\mathcal{G}' = (L' , \lambda')$ (both with respect to a cover $\pi : Y \to X$) is an isomorphism $\varphi : L \to L'$ of graded Real line bundles on $Y^{[2]}$ which intertwines $\lambda$ and $\lambda'$.

The dual of a graded Real bundle gerbe $\mathcal{G} = (L , \lambda)$ is given by $\mathcal{G}^* = (L^* , \lambda^{-1})$ where $L^*$ is the dual of $L$ and $\lambda^{-1}$ is the inverse of $\lambda$. Note that since $L$ is unitary, $L^* \cong L^{-1}$ and so $\lambda^{-1}$ has the form $\lambda^{-1} : L_{12}^{*} \otimes L_{23}^{*} \to L_{13}^{*}$.

Let $N \to Y$ be a graded Real line bundle on $Y$. Define a graded Real gerbe $\delta N = ( L , \lambda )$ where $L = \pi_2^*(N) \otimes \pi_1^*(N^*) = N_1 \otimes N_2^*$ and $\lambda : L_{12} \otimes L_{23} \to L_{13}$ is the map
\[
L_{12} \otimes L_{23} = N_1 \otimes N_2^* \otimes N_2 \otimes N_3^* \to N_1 \otimes N_3^* = L_{13}
\]
given by the dual pairing of $N_2$ and $N_2^*$.

Given graded Real gerbes $\mathcal{G} = (L , \lambda)$, $\mathcal{G}' = (L' , \lambda')$ (both with respect to $Y$) we define their tensor product $\mathcal{G} \otimes \mathcal{G}' = (L'' , \lambda'')$ as follows. We take $L''$ to be the graded tensor product $L'' = L \otimes L'$ and $\lambda''$ is defined by the composition
\[
L_{12} \otimes L'_{12} \otimes L_{23} \otimes L'_{23} \buildrel sw \over \longrightarrow L_{12} \otimes L_{23} \otimes L'_{12} \otimes L'_{23} \buildrel \lambda_{123} \otimes \lambda'_{123} \over \longrightarrow L_{13} \otimes L'_{13}
\]
where $sw$ denotes the graded swapping map 
\[
sw( a \otimes b \otimes c \otimes d ) = (-1)^{\epsilon'_{12}\epsilon_{23}} a \otimes c \otimes b \otimes d,
\]
which swaps $b$ and $c$ together with a sign factor $(-1)^{\epsilon'_{12}\epsilon_{23}}$. Here $\epsilon'_{12}$, $\epsilon_{23}$ denote the gradings of $L'_{12}$ and $L_{23}$.

Let $\mathcal{G}, \mathcal{G}'$ be graded Real gerbes (with respect to $L$). An {\em isomorphism} $(N ,\varphi ) : \mathcal{G} \to \mathcal{G}'$ consists of a graded Real line bundle $N \to Y$ and a strict isomorphism $\varphi : \mathcal{G} \otimes \delta N \to \mathcal{G}'$. Two isomorphisms $(N,\varphi) , (N' , \varphi') : \mathcal{G} \to \mathcal{G}'$ are said to be {\em equivalent} if there is an isomorphism $\psi : N \to N'$ of graded Real line bundles such that the diagram
\[
\xymatrix{
L_{12} \otimes N_1 \otimes N_2^* \ar[rr]^-{ \psi_1 \otimes (\psi_2)^* } \ar[drr]^-{\varphi_{12}} & & L_{12} \otimes N'_1 \otimes (N'_2)^* \ar[d]^-{\varphi'_{12}} \\
& & L'_{12}
}
\]
commutes. Define the category $GrGb_R(X ; Y)$ to be the category whose objects are graded Real bundle gerbes on $X$ defined with respect to the cover $Y$ and whose morphisms $GrGb_R(X ; Y)(\mathcal{G} , \mathcal{G}')$ are equivalence classes of isomorphisms $\mathcal{G} \to \mathcal{G}'$. More generally, we wish to have a notion of isomorphism of graded Real gerbes defined for different covers. We use a similar approach to \cite[\textsection 2.3]{fht1}. Let $\pi : Y \to X$, $\pi' : Y' \to X$ be two Real covers and $r : Y' \to Y$ a Real refinement. If $\mathcal{G} = (L , \lambda)$ is a graded Real gerbe defined with respect to $Y$, then we can define the pullback $r^*(\mathcal{G})$ of $\mathcal{G}$ with respect to $r$ to be $r^*(\mathcal{G}) = ( r^*(L) , r^*(\lambda))$. Pullback defines a functor $r^* : GrGb_R(X ; Y) \to GrGb_R(X ; Y)$. Now we define the category $GrGb_R(X)$ of graded Real gerbes on $X$ as follows. Objects are pairs $(\mathcal{G} , Y)$, where $Y \to X$ is a Real cover and $\mathcal{G}$ is a graded Real bundle gerbe on $X$ defined with respect to the cover $Y$. For objects $a = (\mathcal{G} , Y)$, $b = (\mathcal{G}' , Y')$, the set of morphisms is defined to be the direct limit
\[
GrGb_R(X)(a,b) = \lim\limits_{\underset{Y''\to Y \times_X Y'}{\longrightarrow}} GrGb_R(X ; Y'')( p_1^*(a) , p_2^*(b))
\]
where the direct limit is taken over all refinements $p : Y'' \to Y \times_X Y'$ and where $p_1 : Y'' \to Y$, $p_2 : Y'' \to Y'$ are the projections of $p$ to $Y$ and $Y'$.

Each graded Real gerbe $\mathcal{G}$ on $X$ has an invariant called the {\em (graded) Dixmier--Douady class}, as we now explain. Suppose $\mathcal{G} = (L , \lambda)$ is defined with respect to a Real open cover $\pi : Y \to X$. Since $\pi$ admits local sections, we can find an equivariant open cover $\{ U_i \}_{i \in I}$ and local sections $s_i : U_i \to Y$ such that $\sigma \circ s_i = s_{\sigma(i)} \circ \sigma$ (this can always be done since any open cover $\{ U_i \}$ can be replaced by its double $\{ U_i \} \cup \{ \sigma(U_i) \}$ ). Let $Z = \coprod_i U_i$ and let $s : Z \to Y$ be the map which equals $s_i$ on $U_i$. Then $s : Z \to Y$ is a refinement and we may consider the pullback gerbe $s^*(\mathcal{G}) = ( s^*(L) , s^*(\lambda))$. Refining the cover if necessary, we may assume that $(s_i , s_j)^*( L )$ is trivial on each $U_{ij}$. Choose a local section $t_{ij}$ of $(s_i , s_j)^*( L )$. We assume that $\sigma$ acts freely on the indexing set $I$ and so it is possible to choose the local sections $t_{ij}$ such that $t_{\sigma(i) \sigma(j)} \circ \sigma = \sigma \circ t_{ij}$. Then on $U_{ijk}$, we get an $S^1$-valued function $m_{ijk}$ defined by 
\[
(s^*_i , s^*_j , s^*_k)(\lambda)( t_{ij} , t_{jk} ) = m_{ijk} t_{ik}.
\]
It follows that $m_{ijk}$ is an equivariant \v{C}ech $2$-cocycle and so defines a class in $H^2_{\mathbb{Z}_2}(X ; \mathcal{C}_X(S^1)_-) \cong H^3_{\mathbb{Z}_2}(X ; \mathbb{Z}_-)$ (here $\mathcal{C}_X(S^1)$ denotes the sheaf of continuous $S^1$-valued functions and $ \mathcal{C}_X(S^1)_-$ denotes $ \mathcal{C}_X(S^1) \otimes_{\mathbb{Z}} \mathbb{Z}_-$). We also get a $\mathbb{Z}_2$-valued \v{C}ech $1$-cocycle $\epsilon_{ij}$ by taking $\epsilon_{ij}$ to be the grading of $(s_i^* , s_j^*)(L)$. One finds that the \v{C}ech cohomology classes of $\epsilon_{ij}$ and $m_{ijk}$ depend only on the isomorphism class of $\mathcal{G}$ and not on the choice of open cover $\{ U_i \}$ or local sections $s_i, t_{ij}$. The graded Dixmier--Douady class of $\mathcal{G}$ is then defined to be
\[
DD(\mathcal{G}) = ( [\epsilon_{ij}] , [m_{ijk}] ) \in H^1_{\mathbb{Z}_2}( X ; \mathbb{Z}_2) \times H^3_{\mathbb{Z}_2}(X ; \mathbb{Z}_-).
\]
We will sometimes refer to $[m_{ijk}]$ as the ungraded Dixmier--Douady class of $\mathcal{G}$. Let $\pi_0 GrGb_R(X)$ denote the set of isomorphism classes of objects in $GrGb_R(X)$. The graded Dixmier--Douady class defines a bijection of sets
\[
\pi_0 GrGb_R(X) \to H^1_{\mathbb{Z}_2}( X ; \mathbb{Z}_2) \times H^3_{\mathbb{Z}_2}(X ; \mathbb{Z}_-).
\]
The operation of graded tensor product makes $\pi_0 GrGb_R(X)$ into a group. Under the Dixmier--Douady class, the corresponding group operation on $H^1_{\mathbb{Z}_2}( X ; \mathbb{Z}_2) \times H^3_{\mathbb{Z}_2}(X ; \mathbb{Z}_-)$ is not the direct product, but is instead given by:
\[
( e , m ) \otimes (e' , m') = (e+e' , m + m' + \delta(e \smallsmile e'))
\]
See \cite[Corollary 2.25]{fht1} for the proof in the case of graded gerbes without Real structure. The cases of graded Real gerbes is a straightforward extension of this result. Thus it is better to think of $\pi_0 GrGb_R(X)$ as an extension of groups:
\[
0 \to H^3_{\mathbb{Z}_2}(X ; \mathbb{Z}_-) \to \pi_0 GrGb_R(X) \to H^1_{\mathbb{Z}_2}(X ; \mathbb{Z}_2) \to 0.
\]

Let $1$ denote the trivial graded Real gerbe on $X$ ($Y = X$, $L = \mathbb{C}$ with trivial grading and $\lambda = 1$). Then the set of morphisms $GrGb_R(X)(\mathcal{G} , \mathcal{G}')$ can be identified with $GrPic_R(X)$, the group of graded Real line bundles on $X$. Indeed, if $N \to X$ is any graded Real line bundle on $X$ then there is an evident strict isomorphism $\varphi : \delta N \to \mathbb{C}$ and the pair $(N , \varphi)$ defines a morphism $1 \to 1$. Up to equivalence every morphism $1 \to 1$ can be shown to be of this form. More generally, if $\mathcal{G} , \mathcal{G}'$ are isomorphic graded Real gerbes on $X$, then the set of morphisms $GrGb_R(X)(\mathcal{G} , \mathcal{G}')$ has the structure of a torsor for $GrPic_R(X)$. This follows because if $f_0 : \mathcal{G} \to \mathcal{G}'$ is a fixed choice of isomorphism, then any other isomorphism $f : \mathcal{G} \to \mathcal{G}'$ can be written as $f = f_0 \otimes \psi$ for a uniquely determined isomorphism $\psi : 1 \to 1$.

To each graded Real gerbe $\mathcal{G}$ on $X$, we can associate a real line bundle $det(\mathcal{G}) \to X$ as follows. Suppose $\mathcal{G} = (L , \lambda)$ is defined with respect to a cover $Y$. On $Y$ we take the trivial line bundle $\mathbb{R}_Y = Y \times \mathbb{R}$. On $Y^{[2]}$, the grading function $\epsilon : Y^{[2]} \to \mathbb{Z}_2$ can be regarded as an isomorphism $\epsilon : (\pi_2)^*( \mathbb{R}_Y) \to (\pi_1)^*( \mathbb{R}_Y )$. Since $\epsilon$ is a \v{C}ech cocycle, this isomorphism satisfies the descent condition and so $\mathbb{R}_Y$ descends to a line bundle on $X$, which we denote by $det(\mathcal{G})$. Since $\epsilon$ is $\sigma$ invariant, the involution $\sigma : \mathbb{R}_Y \to \mathbb{R}_Y$ given by $\sigma( y , r ) = (\sigma(y) , r)$ descends to an involution $\sigma : det(\mathcal{G}) \to det(\mathcal{G})$. Thus $det(\mathcal{G})$ is a $\mathbb{Z}_2$-equivariant real line bundle on $X$. Of course the isomorphism class of $det(\mathcal{G})$ is precisely the grading class $[\epsilon] \in H^1_{\mathbb{Z}_2}(X ; \mathbb{Z}_2)$ of $\mathcal{G}$.

Let $\mathcal{G}' = (L' , \lambda')$ be another graded Real gerbe (defined with respect to $Y$) and with grading function $\epsilon' : Y^{[2]} \to \mathbb{Z}_2$. Suppose that $(N , \varphi) : \mathcal{G} \to \mathcal{G}'$ is a morphism. Let $n : Y \to \mathbb{R}^*$ be given by $n = (-1)^{\epsilon(N)}$, where $\epsilon(N)$ is the grading on $N$. We can regard $n$ as a line bundle isomorphism $n : \mathbb{R}_Y \to \mathbb{R}_Y$. Then since $\varphi : L \otimes \delta N \to L'$ is grading preserving, it follows that $(-1)^{\epsilon} n = (-1)^{\epsilon'}$. This means that $n$ descends to a line bundle isomorphism $n : det(\mathcal{G}) \to det(\mathcal{G}')$. Moreover, since the grading on $N$ is $\sigma$-invariant, we have that $n : det(\mathcal{G}) \to det(\mathcal{G}')$ is an isomorphism of equivariant line bundles. Observe also that $n$ depends only on the equivalence class of the isomorphism $(N , \varphi)$. In particular, a trivialisation of $\mathcal{G}$ determines a trivialisation of $det(\mathcal{G})$ in a canonical way.

\subsection{Lifting gerbes}\label{sec:lg}

In this section we consider an important special class of graded Real gerbes known as lifting gerbes. Let $G$ be a Lie group. Let $\epsilon : G \to \mathbb{Z}_2$ be a homomorphism and let
\[
1 \to S^1 \to \widetilde{G} \buildrel \pi \over \longrightarrow G \to 1
\]
be a central extension of $G$ by $S^1$. Define $\widetilde{\epsilon} : \widetilde{G} \to \mathbb{Z}_2$ to be the composition $\widetilde{\epsilon} = \pi \circ \epsilon$. Let $c : \widetilde{G} \to \widetilde{G}$ by an involutive automorphism on $\widetilde{G}$ which covers the identity on $G$ and satisfying $c(z) = \overline{z}$ for all $z \in S^1$.

Let $X$ be a topological space with a Real structure $\sigma$. Let $\rho : P \to X$ be a principal $G$-bundle on $X$ and suppose that $P$ is equipped with a Real structure, by which we mean a lift of $\sigma$ to an involution on $P$ that satisfies $\sigma(pg) = \sigma(p)g$ for all $p \in P$, $g \in G$. The {\em lifting gerbe of $P$} is defined as follows. We take the covering $Y \to X$ to be given by $Y = P$. Then 
\[
Y^{[2]} = \{ (p_1 , p_2) \; | p_2 = p_1 g \text{ for some } g \in G \} \cong Y \times G
\]
where the isomorphism $Y \times G \to Y^{[2]}$ is given by $(p,g) \mapsto (p,pg)$. Over $Y^{[2]}$ we have a Real circle bundle $\mathcal{C} \to Y^{[2]}$ given by $\mathcal{C} = Y \times \widetilde{G}$. The projection $\mathcal{C} \to Y^{[2]}$ is given by $( p , h ) \mapsto ( p , p \pi(h) )$. The Real structure on $\mathcal{C}$ is given by $\sigma( p , h ) = (\sigma(p) , c(h) )$ and the grading $\epsilon : \mathcal{C} \to \mathbb{Z}_2$ is given by $\epsilon(p,h) = \widetilde{\epsilon}(h)$. The multiplication $\lambda : \mathcal{C}_{12} \times \mathcal{C}_{23} \to \mathcal{C}_{13}$ is given by
\[
\lambda( (p_1 , h_1 ) , (p_2 , h_2 ) ) = ( p_1 , h_1 h_2 ).
\]

It is easily checked that this data defines a graded Real gerbe on $X$. We denote it by $L(P)$. By the definition of the grading function, it follows that the determinant line $det(L(P))$ is isomorphic to the line bundle $P \times_G \mathbb{R}$, where $G$ acts on $\mathbb{R}$ via $\epsilon : G \to \mathbb{Z}_2$.

\begin{remark}\label{rem:redstr}
If $H \to G$ is a subgroup, then we can restrict the homomorphism $\epsilon$ and the central extension $\widetilde{G} \to G$ to $H$. If $r : P' \to P$ is a reduction of structure from $P$ to $P'$ (compatible with Real structures), then $r$ induces an isomorphism of lifting gerbes by sending $(p' , h') \in P' \times \widetilde{H}$ (where $\widetilde{H} = \pi^{-1}(H)$) to $(r(p') , h ) \in P \times \widetilde{G}$.
\end{remark}

We will be interested in the particular case of this construction where $G = O(n)$ is an orthogonal group, $\epsilon : O(n) \to \mathbb{Z}_2$ is the determinant homomorphism and the central extension is
\[
1 \to S^1 \to Pin^c(n) \to O(n) \to 1.
\]
Let $V \to X$ be a rank $n$ orthogonal vector bundle. Suppose that $V$ has a Real structure in the sense that there is a lift of $\sigma$ to an orthogonal involution on $V$. Let $P_V \to X$ be the $O(n)$-frame bundle of $V$. Then the Real structure on $V$ induces a Real structure on $P_V$. In this case, we denote the lifting gerbe of $P_V$ by $L(V)$ and call it {\em the lifting gerbe of $V$}. The isomorphism class of $L(V)$ measures the failure of $V$ to admit a Real spin$^c$-structure. Observe that the determinant line of $L(V)$ is canonically isomorphic to $det(V)$, the determinant line of $V$.

Let $V,W$ be rank $n$ orthogonal vector bundles and let $f : V \to W$ be a vector bundle isomorphism. Then $f$ induces a principal bundle isomorphism $f : P_V \to P_W$, where $P_V,P_W$ are the frame bundles of $V,W$. We also have that $f$ induces an isomorphism of lifting gerbes as follows. Let $Y = P_V \times_X P_W$ and let $p_V : Y \to P_V$, $p_W : Y \to P_W$ be the projections. Write $L(V) = ( C_V , \lambda_V)$, $L(W) = (C_W , \lambda_W)$ where 
\[
C_V = \{ (v_1,v_2,h) \in P_V \times_X P_V \times Pin^c(n) \; | \; v_2 = v_1 \pi(h) \},
\]
\[
\lambda_V( (v_1,v_2,h_1) , (v_2,v_3 , h_2) ) = (v_1 , v_3 , h_1 h_2)
\]
and $C_W, \lambda_V$ are defined similarly. We will construct an isomorphism $(N , \varphi) : p_V^*(L(V)) \to p_W^*(L(W))$ as follows. We will take $N$ to be the trivial Real line bundle but with grading $n : Y \to \mathbb{Z}_2$ given by $n( v , w ) = \epsilon(t)$, where $t \in O(n)$ is given by $f(v) = wt$. The map $\varphi : p_V^*(C_V) \otimes \delta N \to p_W^*(C_W)$ is defined as follows. First note that
\[
p_V^*(C_V) = \{ (v_1,v_2,w_1,w_2,h) \in P_V^{[2]} \times_X P_W^{[2]} \times Pin^c(n) \; | \; v_2 = v_1 \pi(h) \}
\]
and similarly
\[
p_W^*(C_W) = \{ (v_1,v_2,w_1,w_2,h) \in P_V^{[2]} \times_X P_W^{[2]} \times Pin^c(n) \; | \; w_2 = w_1 \pi(h) \}.
\]
Then we define $\varphi(v_1,v_2,w_1,w_2,h) = (v_1,v_2,w_1,w_2, t_1 h t_2^{-1})$, where $t_1,t_2 \in O(n)$ are defined by $f(v_i) = w_i t_i$ for $i=1,2$. It is easily checked that this defines an isomorphism of gerbes $p_V^*( L(V)) \to p_W^*( L(W) )$ defined over $Y = P_V \times_X P_W$, and hence an isomorphism of gerbes $L(V) \to L(W)$. Moreover, the isomorphism $det(L(f)) : det( L(V) ) \to det( L(W) )$ of determinant lines is precisely the map given by the determinant of $f$: $det(f) : det(V) \to det(W)$.

Suppose $P \to X$ is a principal $G$-bundle with a Real structure. Suppose that $\widetilde{\pi} : \widetilde{P} \to P$ is a lift of $P$ to a principal $\widetilde{G}$-bundle and suppose $\epsilon : \widetilde{P} \to \mathbb{Z}_2$ is a continuous function such that $\epsilon( q h) = \epsilon(q) + \widetilde{\epsilon}(h)$ for all $q \in \widetilde{P}$ and $h \in \widetilde{G}$. Suppose also that $\sigma$ lifts to an involution on $\widetilde{P}$ which is Real in the sense that $\sigma(qh) = \sigma(q) c(h)$ for all $q \in \widetilde{P}$, $h \in \widetilde{H}$. We claim that this data determines a canonical trivialisation of the lifting gerbe of $P$. To see this, first note that $\widetilde{P} \to P$ is a graded Real line bundle over $Y = P$, where the grading is given by $\epsilon : \widetilde{P} \to \mathbb{Z}_2$. One checks that $(\delta \widetilde{P})$ is given by the set of pairs $(q_1,q_2)$ such that $\widetilde{\pi}(q_2) = \widetilde{\pi}(q_1)g$ for some $g \in G$, modulo the equivalence relation $(q_1,q_2) \sim (q_1 z , q_2 z)$, where $z \in S^1$. But $\widetilde{\pi}(q_2) = \widetilde{\pi}(q_1)g$ for some $g \in G$ if and only if $q_2 = q_1 h$ for some $h \in \widetilde{H}$. It follows that $\delta \widetilde{P}$ can be identified with the set of pairs $(q_1 , h) \in \widetilde{P} \times \widetilde{H}$ modulo the equivalence relation $(q_1,h) \sim (q_1 z , h)$, where $z \in S^1$. This shows that there is a canonical isomorphism $\varphi : \delta \widetilde{P} \cong P \times \widetilde{G} = \mathcal{C}$. Explicitly, $\varphi(q_1,q_2) = ( \widetilde{q_1} , h)$, where $q_2 = q_1 h$. The Real structure on $\delta \widetilde{P}$ is $\sigma(q_1 , q_2) = (\sigma(q_1) , \sigma(q_2)$. Then $\varphi ( \sigma(q_1 , q_2) ) = \varphi( \sigma(q_1) , \sigma(q_2) ) = ( \widetilde{\pi}(\sigma(q_1)) , c(h) ) = (\sigma( \widetilde{\pi}(q_1) ) , c(h) ) = \sigma( \varphi( q_1 , q_2) )$. So $\varphi$ respects Real structures. The grading on $\delta \widetilde{P}$ is given by $\epsilon(q_1,q_2) = \epsilon(q_1) + \epsilon(q_2)$. But if $q_2 = q_1 h$, then $\epsilon(q_1) + \epsilon(q_2) = \epsilon(q_1) + \epsilon(q_1) + \widetilde{\epsilon}(h) = \widetilde{\epsilon}(h)$. Thus $\varphi : \delta \widetilde{P} \to \mathcal{C}$ is an isomorphism of graded Real line bundles. Lastly, it is easily checked that $\varphi$ respects multiplication and so $\varphi$ is a strict isomorphism from the trivial gerbe $\delta \widetilde{P}$ to the lifting gerbe $L(P) = (\mathcal{C} , \lambda)$. Thus the pair $( \widetilde{P} , \varphi)$ defines a trivialisation $(\widetilde{P} , \varphi ) : 1 \to L(P)$, as claimed. The trivialisation $(\widetilde{P} , \varphi) : 1 \to L(P)$ induces a trivialisation $1 \to det( L(P) )$ of the determinant line. Recall that $det(L(P))$ can be identified with the line bundle $P \times_G \mathbb{R}$, where $G$ acts on $\mathbb{R}$ through $\epsilon : G \to \mathbb{Z}_2$. This trivialisation of $det(L(P))$ is the trivialisation defined by the grading $\epsilon : \widetilde{P} \to \mathbb{Z}_2$ (note first that $\epsilon : \widetilde{P} \to \mathbb{Z}_2$ descends to a map $\epsilon : P \to \mathbb{Z}_2$. Viewing $\mathbb{Z}_2 = \{ \pm 1\} \subset \mathbb{R}^*$, the map $\epsilon : P \to \mathbb{Z}_2$ defines a non-vanishing section of $P \times_G \mathbb{R}$).

Let $V$ be an $O(n)$-vector bundle with a Real structure. Let $P_V$ be the principal $O(n)$-frame bundle. In the case, a pair $( \widetilde{P} , \epsilon )$ where $\widetilde{P}$ is a lift of $P_V$ to $Pin^c(n)$ and $\epsilon : \widetilde{P} \to \mathbb{Z}_2$ satisfies $\epsilon(qh) = \epsilon(q) + \widetilde{\epsilon}(h)$. This data is (by definition) a Real spin$^c$-structure on $V$. Thus we have  a canonical bijection between Real spin$^c$-structures on $V$ and trivialisations of the lifting gerbe $L(V)$.

If $V = V_1 \oplus V_2$ is a direct sum of orthogonal vector bundles, then there is a canonical isomorphism $L(V_1 \oplus V_2) \cong L(V_1) \otimes L(V_2)$. To see this, let $P_1,P_2$ and $P$ be the orthogonal frame bundles of $V_1,V_2$ and $V$. From $V = V_1 \times V_2$ we get a reduction of structure map $r : P_1 \times_X P_2 \to P$. This is a reduction of structure from $G = O(n)$ to $H = O(n_1) \times O(n_2)$, where $n_i$ is the rank of $V_i$. Then as seen in Remark \ref{rem:redstr}, the lifting gerbe of $P$ is canonically isomorphic to the lifting gerbe of $P_1 \times_X P_2$. Set $Y = P_1 \times_X P_2$. Then the lifting gerbe for $P_1 \times_X P_2$ is given by $\mathcal{C} = Y \times \widetilde{H}$, where $\widetilde{H} = \{ h \in Pin^c(n) \; | \; \pi(h) \in O(n_1) \times O(n_2) \}$. As $S^1$-bundles, $\widetilde{H} \cong Pin^c(n_1) \times_{S^1} Pin^c(n_2)$. Moreover the group structure is given by
\[
(h_1 , h_2) ( h'_1 , h'_2) = (-1)^{\widetilde{\epsilon}(h'_1) \widetilde{\epsilon}(h_2)} ( h_1 h'_1 , h_2 h'_2).
\]
The sign factor $(-1)^{\widetilde{\epsilon}(h'_1) \widetilde{\epsilon}(h_2)}$ comes from the Clifford algebra relations $vw = -wv$ if $v \in \mathbb{R}^{n_1} \oplus 0$, $w \in 0 \oplus \mathbb{R}^{n_2}$. This means that as graded $S^1$ bundles over $O(n_1) \times O(n_2)$ we have $\widetilde{H}$ is the {\em graded} tensor product $Pin^c(n_1) \otimes Pin^c(n_2)$. From this, it follows easily that the lifting gerbe for $P_1 \times_X P_2$ is the graded tensor product of the lifting gerbes for $P_1$ and $P_2$, hence $L(V_1 \oplus V_2) \cong L(V_1) \otimes L(V_2)$.

In the isomorphism $L(V_1 \oplus V_2) \cong L(V_1) \otimes L(V_2)$, the order of the summands $V_1$, $V_2$ matters: the composition
\[
L(V_1) \otimes L(V_2) \cong L(V_1 \oplus V_2) \cong L(V_2 \oplus V_1) \cong L(V_2) \otimes L(V_1) \cong L(V_1) \otimes L(V_2)
\]
is an automorphism of $L(V_1) \otimes L(V_2)$ and hence corresponds to a graded Real line bundle on $X$. This graded line bundle is the trivial Real line bundle, but with grading equal to $(-1)^{dim(V_1)dim(V_2)}$. This corresponds to the fact that the composition
\[
det(V_1) \otimes det(V_2) \cong det(V_1 \oplus V_2) \cong det(V_2 \oplus V_1) \cong det(V_2) \otimes det(V_1) \cong det(V_1) \otimes det(V_2)
\]
is multiplication by $(-1)^{dim(V_1)dim(V_2)}$.

\subsection{Graded Real gerbes over a point}\label{sec:ggbpt}

We consider graded Real gerbes over point. In this case $X = \{pt\}$ and the group is isomorphism classes of graded Real gerbes is an extension
\[
0 \to H^3_{\mathbb{Z}_2}( pt ; \mathbb{Z}_-) \to \pi_0 GrGb_R(pt) \to H^1_{\mathbb{Z}_2}(pt ; \mathbb{Z}_2) \to 0.
\]
Since $H^3_{\mathbb{Z}_2}(pt ; \mathbb{Z}_-) \cong H^1_{\mathbb{Z}_2}(pt ; \mathbb{Z}_2) \cong \mathbb{Z}_2$, this is an extension of $\mathbb{Z}_2$ by $\mathbb{Z}_2$. In particular, there are four isomorphism classes. Up to isomorphism, any graded Real gerbe on $pt$ can be represented using the covering $Y = \{0,1\}$ with $\sigma$ the involution that swaps $0$ and $1$. Let $\mathcal{G} = (L , \lambda)$ be a graded Real gerbe on $X$ defined with respect to the covering $Y$. Then $L = L_{00} \cup L_{01} \cup L_{10} \cup L_{11}$ where $L_{ij} = L|_{ \{i \} \times \{j\}}$. Choose non-vanishing sections $t_{ij} \in L_{ij}$ such that $\sigma(t_{ij}) = t_{\sigma(i) \sigma(j)}$. We can use $\{ t_{ij} \}$ identify $L$ with the trivial Real line bundle $\mathbb{C}$, but equipped with a possibly non-trivial grading $\epsilon_{ij} \in \mathbb{Z}_2$. Since $\sigma : L_{ij} \to L_{ \sigma(i) \sigma(j)}$ is grading preserving, we must have $\epsilon_{00} = \epsilon_{11}$ and $\epsilon_{01} = \epsilon_{10}$. The gerbe multiplication $\lambda$ is a collection of isomorphisms $\lambda_{ijk} : L_{ij} \otimes L_{jk} \to L_{ik}$. Since $\lambda_{ijk}$ is required to be grading preserving, we must have that $\epsilon_{ij} + \epsilon_{jk} = \epsilon_{ik}$. This implies that $\epsilon_{00} = \epsilon_{11} = 0$, so $\epsilon_{ij}$ is completely determined by $\epsilon_{01} \in \mathbb{Z}_2$. Set $m_{ijk} \in S^1$ to be given by $\lambda_{ijk}(1,1) = m_{ijk}$. Then $m_{ijk}$ is an equivariant \v{C}ech $2$-cocycle, that is, $m_{\sigma(i) \sigma(j) \sigma(k)} = \overline{m_{ijk}}$ and $m_{jkl} m_{ikl}^{-1} m_{ijl} m_{ijk}^{-1} = 1$. Replacing $t_{00}$ by $m_{000}^{-1}t_{00}$ and $t_{11}$ by $m_{111}^{-1}t_{11}$, we can assume that $\lambda_{iii}(t_{ii} , t_{ii}) = t_{ii}$ for $i=0,1$ and hence $m_{000} = m_{111} = 1$. From the cocycle condition we also get $m_{001} = m_{110} = m_{100} = m_{011} = 1$. Since $m_{010} = \overline{m_{101}}$, this means $m_{ijk}$ is completely determined by $m_{010} \in S^1$. Furthermore the $2$-cocycle condition implies that $m_{010}^2 = 1$, so $m_{010} = \pm 1$. Write $m_{010} = (-1)^{\mu_{010}}$ where $\mu_{010} \in \mathbb{Z}_2 = \{0,1\}$. The graded Dixmier--Douady class of $\mathcal{G}$ can be identified with the pair $( \epsilon_{01} , \mu_{010} ) \in \mathbb{Z}_2^2$. The group law is easily seen to be given by
\[
(e_1 , \mu_1) (e_2 , \mu_2) = (e_1 + e_2 , \mu_1 + \mu_2 + e_1 e_2).
\]
This is a non-trivial extension of $\mathbb{Z}_2$ by $\mathbb{Z}_2$, hence $\pi_0 Gr Gb_R(pt)$ is isomorphic to a cyclic group of order $4$.

Let $\mathbb{R}_-$ denote the $1$-dimensional real representation of $\mathbb{Z}_2 = \langle \sigma \rangle$ where $\sigma$ acts as multiplication by $-1$. We can view $\mathbb{R}_-$ as an equivariant vector bundle over $pt$. The point gerbes can be identified with the lifting gerbes $L( \mathbb{R}^k_-)$ as follows. Since $L(\mathbb{R}^k_-) \cong L(\mathbb{R}_-)^k$, it suffices to show that $L(\mathbb{R}_-)$ is a generator for $\pi_0 Gr Gb_R(pt)$. The frame bundle of $\mathbb{R}_-$ is given by a two-element set $Y = \{0,1\}$ and $\sigma : Y \to Y$ acts by swapping $0$ and $1$. Here we regard $i \in Y$ as corresponding to the frame $(-1)^i \in \mathbb{R}_-$. The group $Pin^c(1)$ equals $S^1 \cup S^1 e$ where $e^2 = - 1$, $\epsilon(e) = 1 \in \mathbb{Z}/2\mathbb{Z}$. The line bundle $C \to Y^{[2]} = \{ 00 , 01 , 10 , 11 \}$ is given by $C = C_{00} \cup C_{01} \cup C_{10} \cup C_{11}$ where $C_{ij} = S^1$ if $i=j$ and $C_{ij} = S^1 e$ if $i \neq j$. The product $\lambda_{ijk} : C_{ij} \times C_{jk} \to C_{ik}$ is given by group multiplication in $Pin^c(1)$. Let $t_{ij} \in C_{ij}$ be given by $t_{ij} = 1$ if $i = j$, $t_{ij} = e$ if $i \neq j$. Then $\lambda_{ijk}(t_{ij} , t_{jk}) = m_{ijk} t_{ik}$, where $m_{010} = m_{101} = -1$, $m_{ijk} = 1$ otherwise. The grading is given by $e_{ij} = \epsilon(t_{ij})$. Thus $e_{ij} = 0$ if $i=j$ and $e_{ij} = 1$ if $i \neq j$. This shows that the graded Dixmier--Douady class of $L(\mathbb{R}_-)$ equals $( e_{ij} , m_{ijk} ) \in H^1_{\mathbb{Z}_2}(pt ; \mathbb{Z}_2) \oplus H^3_{\mathbb{Z}_2}(pt ; \mathbb{Z}_-)$, where $e_{ij}, m_{ijk}$ are the generators of $H^1_{\mathbb{Z}_2}(pt ; \mathbb{Z}_2) \cong H^3_{\mathbb{Z}_2}(pt ; \mathbb{Z}_-) \cong \mathbb{Z}_2$. In particular, $L(\mathbb{R}_-)$ is a generator of $\pi_0 Gr Gb_R(pt)$, as claimed.

\section{Twisted $KR$-theory}\label{sec:tkr}

In this section we will recall some basic properties of pushforward maps in twisted $KR$-theory that will be needed for the Real Fourier--Mukai transforms. For more details on twisted $K$-theory we refer the reader to \cite{fht1} and \cite{as} for complex twisted $K$-theory and \cite{hmsv,fok,fm,gom} for twisted $KR$-theory.

Let $X$ be a topological space with a Real structure $\sigma$ and let $\mathcal{G}$ be a graded Real gerbe on $X$. The twisted $KR$-theory of $(X , \mathcal{G})$ will be denoted by $KR^*(X,\mathcal{G})$. If $A \subseteq X$ is a subspace then the relative twisted $KR$-theory of $(X,A,\mathcal{G})$ will be denoted by $KR^*(X,A,\mathcal{G})$.

Let $X,Y$ be compact smooth manifolds equipped with Real structures $\sigma_X, \sigma_Y$ and let $\mathcal{G}_Y$ be a graded Real gerbe on $Y$. Then to any continuous map $f : X \to Y$ we have a pushforward morphism \cite{hmsv}
\[
f_* : KR^*(X , L(TX) \otimes f^*(\mathcal{G}_Y) ) \to KR^{*-dim(X)+dim(Y)}(Y , L(TY) \otimes \mathcal{G}_Y)
\]
defined so that the following square commutes
\[
\xymatrix{
KR^*(X , L(TX) \otimes f^*(\mathcal{G}_Y)) \ar[r]^-{f_*} \ar[d]^-{PD_X} & KR^{*-dim(X)+dim(Y)}(Y , L(TY) \otimes \mathcal{G}_Y) \ar[d]^-{PD_Y} \\
KR_{dim(X)-*}(X , f^*(\mathcal{G}_Y) ) \ar[r]^-{f_*} & KR^{dim(X)-*}(Y , \mathcal{G}_Y)
}
\]
where the vertical maps are Poincar\'e duality isomorphisms. More generally, if $X$ and $Y$ are smooth manifolds which are not necessarily compact and $f : X \to Y$ is a continuous map, we still have a pushforward morphism 
\[
f_* : KR^*_c(X , L(TX) \otimes f^*(\mathcal{G}_Y) ) \to KR^{*-dim(X)+dim(Y)}_c(Y , L(TY) \otimes \mathcal{G}_Y)
\]
provided we use compactly supported $KR$-theory: $KR^*_c(M , \mathcal{G}) = KR^*( M^+ , + , \mathcal{G})$, where $M^+$ denotes the one-point compactification of $M$. If $f : X \to Y$ is a fibre bundle, then $TX \cong f^*(TY) \oplus V$ where $V$ is the vertical tangent bundle. Hence $L(TX) \cong f^*( L(TY) ) \oplus L(V)$ and in such cases the pushforward takes the form
\[
f_* : KR^*(X , L(V) \otimes f^*(\mathcal{G}_Y) ) \to KR^{*-m}(Y , \mathcal{G}_Y)
\]
where $m = dim(X) - dim(Y)$ is the dimension of the fibres.

The main properties of pushforward morphisms we need are:
\begin{itemize}
\item[(1)]{Composition: $(f \circ g)_* = f_* \circ g_*$.}
\item[(2)]{Projection formula: $f_*(  f^*(y)x ) = y f_*(x)$.}
\item[(3)]{Thom isomorphism: let $\sigma : X \to X$ be an involution and let $\pi : V \to X$ be a Real vector bundle over $X$. That is, $V$ is a complex vector bundle together with an antilinear lift of $\sigma$ to $V$. Then there exists a class $\tau_V \in KR^m( D(V) , S(V) , \pi^*(L(V)) )$ (where $m$ is the rank of $V$, $D(V)$ is the closed unit disc bundle, $S(V)$ is the unit sphere bundle) called the {\em Thom class of $V$} with the property that for any graded Real gerbe $\mathcal{G}$ on $X$, the map $KR^*(X , \mathcal{G}) \to KR^{*+m}( D(V) , S(V) , \pi^*( \mathcal{G} \otimes L(V) )$ given by $\alpha \mapsto \pi^*(\alpha) \tau_V$ is an isomorphism. Furthermore, we have that $\pi_*( \tau_V) = 1$.}
\item[(4)]{Embeddings: if $f : X \to Y$ is an embedding then $f_*$ is the composition
\begin{align*}
KR^*_c( X , L(TX) \otimes \mathcal{G}_Y|_X) &\to KR_c^{*-dim(X)+dim(Y)}( \nu X , L(TY) \otimes \mathcal{G}_Y|_{\nu X}) \\
& \to KR_c^{*-dim(X)+dim(Y)}(Y , L(TY) \otimes \mathcal{G}_Y)
\end{align*}
where the first map is the Thom isomorphism, $\nu$ is an open tubular neighbourhood of $X$ in $Y$ and the second map is extension by zero (i.e. pullback under the map $Y^+ \to (\nu X)^+$ which sends everything outside of $\nu X$ to infinity). In particular, for open embeddings $f_*$ is just extension by zero.}
\item[(5)]{Base change: let $X,Y,E$ be compact smooth manifolds. Let $\pi : E \to X$ be a fibre bundle over $Y$ with compact fibres and let $f : X \to Y$ be a proper map. Let $\pi_X : E_X \to X$ be the pullback of $E$ along $f$ bundle giving a commutative diagram
\[
\xymatrix{
E_X \ar[r]^-{\widetilde{f}} \ar[d]^-{\pi_X} & E \ar[d]^-{\pi} \\
X \ar[r]^-{f} & Y
}
\]
Assume $X,Y,E$ are given Real structures which are respected by $\pi$ and $f$. Then $E_X$ inherits a Real structure. Let $\mathcal{G}$ be a graded Real gerbe on $Y$. Then the base change formula is that the following square commutes
\[
\xymatrix{
KR^*_c( E , L(V) \otimes \pi^*(\mathcal{G})) \ar[d]^-{\pi_*} \ar[r]^-{\widetilde{f}^*} & KR^*_c( E_X , \widetilde{f}^*(L(V)) \otimes \pi_X^* f^* (\mathcal{G})) \ar[d]^-{(\pi_X)_*} \\
KR^{*-m}_c( Y , \mathcal{G} ) \ar[r]^-{f^*} & KR^{*-m}_c( X , f^{*}(\mathcal{G}))
}
\]
where the fibres of $E \to Y$ are $m$-dimensional and $V$ is the vertical tangent bundle of $E \to Y$.}
\end{itemize}

We also need that the pushforward commutes with the Mayer--Vietoris sequence in an appropriate sense. In general this seems difficult to verify since it is not clear that the pushforward will commute with the coboundary maps. However we can show that this does hold for a special class of Mayer--Vietoris sequences. 

Let $X$ be a smooth manifold. Suppose that $X = X_- \cup_Y X_+$ where $X_+,X_-$ are smooth manifolds with boundary ($Y$ is regarded as an outgoing boundary of $X_-$ and an ingoing boundary of $X_+$) and assume that the inclusion $Y \to X$ is proper. Assume that $X_-$ contains a collar neighbourhood $(-2 , 0] \times Y$ and $X_+$ contains a collar neighbourhood $[0 , 2) \times Y$. Let $U = X_- \cup_Y [0,1) \times Y$, $V = (-1,0] \times Y \cup_Y X_+$. Then $U,V$ are open subsets covering $X$ and $U \cap V = (-1,1) \times Y$.

Note that $X_- - Y$ is diffeomorphic to $U$. In fact we can choose a diffeomorphism $f : X_- - Y \to U$ which is the identity outside of $(-2,0) \times Y$ and which sends $(-2,0) \times Y$ to $(-2,1) \times Y$ via a map of the form $(t,y) \mapsto ( \varphi(t) , y)$, where $\varphi : (-2,0) \to (-2,1)$ is a diffeomorphism which is the identity near $-2$. Similarly $X_+ - Y$ is diffeomorphic to $V$.

Assume $X$ has a Real structure $\sigma$. Assume that $X_+,X_-$ and $Y$ are preserved by $\sigma$. It follows that the collar neighbourhood $(-2,0] \times Y$ of $X_-$ can be chosen so that $\sigma$ is given in this neighbourhood by $\sigma(t,y) = (t , \sigma(y))$. The same is true for the collar neighbourhood $[0 , 2) \times Y$ of $Y$ in $X_+$.

Let $i : U \cap V \to U$, $j : U \cap V \to V$, $k : U \to X$, $l : V \to X$ be the inclusions. Let $i_Y : Y \to X$ be the inclusion of $Y$ in $X$ and let $\iota : Y \to U \cap V$ be the inclusion of $Y$ in $U \cap V$. Let $\mathcal{G}$ be a graded Real gerbe on $X$.

\begin{proposition}\label{prop:mv}
We have a long exact sequence
\begin{small}
\[
\xymatrix@C-1pc{
\cdots \ar[r] &  KR^a_c( U \cap V , \mathcal{G}) \ar[rr]^-{(i_* , -j_*)} & &{\begin{matrix} KR^a_c(U , \mathcal{G}) \\ \oplus \\ KR^a_c(V , \mathcal{G}) \end{matrix}} \ar[rr]^-{k_* + l_*} & & KR^a_c(X , \mathcal{G}) \ar[r]^-{\delta} & KR^{a+1}_c( U \cap V , \mathcal{G}) \ar[r] & \cdots
}
\]
\end{small}
where $\delta = \iota_* \circ i_Y^*$.
\end{proposition}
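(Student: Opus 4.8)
### Proof Strategy for Proposition \ref{prop:mv}

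The plan is to first establish the underlying long exact sequence as a purely topological statement, ignoring the pushforward description of the connecting map, and then to identify the connecting map $\delta$ with $\iota_* \circ i_Y^*$ by a careful geometric analysis of the boundary homomorphism in the collar region. The existence of the Mayer--Vietoris sequence itself is standard: since $\{U, V\}$ is an open cover of $X$ and twisted $KR$-theory is a generalized cohomology theory on the category of spaces-with-involution-and-gerbe (equivalently, one applies the usual excision/exact-sequence machinery to compactly supported $KR$-theory via the one-point compactifications, noting that $X^+ = U^+ \cup_{(U\cap V)^+} V^+$ after collapsing complements), one obtains the long exact sequence with maps induced by the inclusions. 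The only subtlety at this stage is matching the twists: because $\mathcal{G}$ is pulled back from $X$ to all of $U$, $V$, $U \cap V$, and there is no vertical-tangent-bundle contribution here (all the maps $i,j,k,l$ are open embeddings, so by property (4) the pushforwards $i_*, j_*, k_*, l_*$ are simply extension-by-zero and involve no $L(TX)$-type correction), the sequence is twisted consistently by $\mathcal{G}$ throughout. This gives everything except the formula for $\delta$.

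The heart of the argument is the identification $\delta = \iota_* \circ i_Y^*$. First I would localize the computation: the connecting map in a Mayer--Vietoris sequence factors through the neighbourhood of $U \cap V$, so only the geometry near the hypersurface $Y$ matters. Using the diffeomorphisms $f : X_- - Y \xrightarrow{\sim} U$ and its analogue for $V$ (which are the identity away from the collars), together with the product form $\sigma(t,y) = (t,\sigma(y))$ of the Real structure on the collar $(-2,2) \times Y$, I would reduce to the model situation $X = \mathbb{R} \times Y$, $U = (-\infty, 1) \times Y$, $V = (-1, \infty) \times Y$, $U \cap V = (-1,1) \times Y$, with $Y = \{0\} \times Y$. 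In this model the connecting map of the Mayer--Vietoris sequence for $KR^*_c$ is, up to the suspension isomorphism for the $\mathbb{R}$-factor, the composite of restriction $KR^a_c(\mathbb{R} \times Y, \mathcal{G}) \to KR^a_c(\mathbb{R} \times Y, \mathcal{G})$ followed by the pushforward along the inclusion $\{0\} \times Y \hookrightarrow (-1,1) \times Y$ of the slice. Concretely: the connecting homomorphism for the cover of $\mathbb{R}$ by two half-lines sends a compactly supported class on $\mathbb{R} \times Y$ to its "jump across $0$", which is exactly the pushforward of the restriction to the slice $\{0\}\times Y$, because the Thom class of the normal bundle of $\{0\} \times Y$ in $(-1,1)\times Y$ (a trivial Real line bundle, hence contributing the one-dimensional Thom isomorphism) implements precisely the suspension. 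Tracing the identification $X_- - Y \cong U$, $X_+ - Y \cong V$ back through property (4) (embeddings factor as Thom isomorphism followed by extension by zero) and property (1) (composition of pushforwards), the slice inclusion $\{0\}\times Y \hookrightarrow (-1,1)\times Y$ becomes $\iota : Y \to U \cap V$ and the restriction becomes $i_Y^* : KR^a_c(X,\mathcal{G}) \to KR^a_c(Y, i_Y^*\mathcal{G})$, giving $\delta = \iota_* \circ i_Y^*$.

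The main obstacle I anticipate is bookkeeping the orientations and signs: the connecting map of a Mayer--Vietoris sequence is only defined up to sign, and one must check that the convention under which $(i_*, -j_*)$ and $k_* + l_*$ appear is compatible with the sign built into $\iota_*$ (which, via property (4), carries an implicit choice of Thom class $\tau$ for the normal line bundle of $Y$, i.e. a choice of coorientation of $Y$ in $X$ as an "outgoing boundary of $X_-$"). This is exactly why the statement fixes $Y$ as an outgoing boundary of $X_-$ and an ingoing boundary of $X_+$ — that data pins down the coorientation and hence the sign. I would handle this by doing the model computation on $\mathbb{R} \times Y$ with all conventions made explicit, verifying that the standard suspension isomorphism $KR^{a}_c(\mathbb{R}\times Y,\mathcal{G}) \cong KR^{a-1}_c(Y, i_Y^*\mathcal{G} \otimes L(\mathbb{R}))$ — here $L(\mathbb{R})$ is canonically trivial since the normal bundle is a trivial \emph{oriented} real line bundle, so no grading shift appears — matches the pushforward $\iota_*$ composed with the Mayer--Vietoris boundary for the two half-lines of $\mathbb{R}$, where the latter is a classical and sign-unambiguous computation. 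A secondary, more routine obstacle is checking that the open-embedding pushforwards genuinely agree with the maps induced on $KR^*_c$ by the inclusions $U \hookrightarrow X$ etc. (rather than differing by a sign or a twist); this follows immediately from property (4), which states that for open embeddings $f_*$ \emph{is} extension by zero, so no discrepancy arises.
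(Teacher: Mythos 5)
The overall strategy you propose for identifying $\delta$ (localize to a collar of $Y$, use the Thom class of the normal line of $Y$ to recognize the connecting map as a composite of restriction to the slice followed by a pushforward) is the same as the paper's, and the sign-tracking you flag is indeed the subtle point the paper addresses via the boundary orientation of $I\times Y$.

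However, there is a genuine gap at the start: you assert that the existence of the Mayer--Vietoris sequence with the pushforward maps $i_*,j_*,k_*,l_*$ is ``standard'' for compactly supported twisted $KR$-theory, citing $X^+ = U^+ \cup_{(U\cap V)^+} V^+$ ``after collapsing complements''. This identification of one-point compactifications does not make sense as a pushout (the basepoints at infinity of $U^+$, $V^+$, and $(U\cap V)^+$ are not related by inclusions), and more importantly, the exactness of this particular long sequence is precisely what the proposition claims and therefore cannot be cited. The paper does not assume the sequence exists; it constructs a commutative ladder whose bottom row is the long exact sequence of the pair $(X^+,Y^+)$ — which is genuinely standard — and whose vertical maps are isomorphisms, and deduces exactness of the top row and the formula $\delta = \iota_*\circ i_Y^*$ simultaneously from this diagram. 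Within that ladder, the key computation identifying $\delta_{X_\pm,Y}$ as $\pm(\text{Thom class})\cdot\iota_*$ via the comparison with $(I\times Y,\partial I\times Y)$ is essentially the model computation you sketch, but it is done for the boundary map of a pair, not for an abstractly posited Mayer--Vietoris boundary. If you replaced your first step with this explicit construction from the pair $(X^+,Y^+)$, the rest of your plan would go through; as written, the exactness of the sequence is not established.
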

\begin{proof}
Consider the following diagram:
\begin{small}
\[
\xymatrix@C-1pc{
\cdots \ar[r] & KR^a_c( U \cap V , \mathcal{G}) \ar[r]^-{(i_*,-j_*)} & {\begin{matrix} KR^a_c(U, \mathcal{G}) \\ \oplus \\ KR^a_c(V, \mathcal{G}) \end{matrix}} \ar[r]^-{k_* + l_*} & KR^a(X, \mathcal{G}) \ar[r]^-{\delta} & KR^{a+1}_c(U \cap V) \ar[r] & \cdots \\
\cdots \ar[r] & KR^{a-1}_c(Y,\mathcal{G}) \ar[r]^-{\delta_{X,Y}} \ar[u]^-{\iota_*} & KR^a_c(X - Y , \mathcal{G}) \ar[u]^-{\psi} \ar[r] & KR^a_c(X , \mathcal{G}) \ar[r]^-{i_Y^*} \ar@{=}[u] & KR^a_c(Y , \mathcal{G}) \ar[r] \ar[u]^-{\iota_*} & \cdots
}
\]
\end{small}
In this diagram the bottom row is the long exact sequence of the pair $(X^+,Y^+)$ and the map $\psi$ is the composition
\[
KR^a_c(X - Y, \mathcal{G}) \cong \begin{matrix}KR^a_c(X_-  - Y , \mathcal{G}) \\ \oplus \\ KR^a_c(X_+ - Y , \mathcal{G}) \end{matrix}\cong \begin{matrix} KR^a_c(U , \mathcal{G}) \\ \oplus \\ KR^a_c(V , \mathcal{G}) \end{matrix}
\]
where the first isomorphism follows from $X-Y = (X_- - Y) \cup (X_+ - Y)$ and the second isomorphism uses the diffeomorphisms $X_- - Y \cong U$, $X_+ - Y \cong V$. We claim that the above diagram is commutative, which implies the top row is exact (the vertical maps are all isomorphisms). Commutativity of the rightmost square is immediate from the definition $\delta = \iota_* \circ i_Y^*$. Commutativity of the middle square follows easily from the definition of $\psi$. It remains to show commutativity of the leftmost square. This requires an understanding of the coboundary map $\delta_{X,Y}$.

Since $KR^a_c(X - Y,\mathcal{G}) \cong KR^a_c(X_- - Y,\mathcal{G}) \oplus KR^a_c(X_+ - Y , \mathcal{G})$, it suffices to determine the compositions $KR^{a-1}(Y,\mathcal{G}) \buildrel \delta_{X,Y} \over \longrightarrow KR^a(X - Y,\mathcal{G}) \to KR^a(X_{\pm} - Y , \mathcal{G})$, which we denote by $\delta_{X_{\pm} , Y}$. Compatibility of the long exact sequences for the inclusion of pairs $(X_{\pm}^+ , Y^+) \to (X^+,Y^+)$ implies that $\delta_{X_{\pm} , Y}$ is the coboundary map for the pair $(X_{\pm}^+ , Y^+)$. Commutativity of the leftmost square is then equivalent to commutativity of the following two diagrams:
\[
\xymatrix{
KR^{a-1}_c( U \cap V , \mathcal{G}) \ar[r]^-{i_*} & KR^a_c(U , \mathcal{G}) \\
KR^{a-1}_c(Y,\mathcal{G}) \ar[u]^-{\iota_*} \ar[r]^-{\delta_{X_-,Y}} & KR^a(X_- - Y , \mathcal{G}) \ar[u]^-{\cong}
}
\]
and
\[
\xymatrix{
KR^{a-1}_c( U \cap V , \mathcal{G}) \ar[r]^-{-j_*} & KR^a_c(V,\mathcal{G}) \\
KR^{a-1}_c(Y,\mathcal{G}) \ar[u]^-{\iota_*} \ar[r]^-{\delta_{X_+,Y}} & KR^a(X_+ - Y,\mathcal{G}) \ar[u]^-{\cong}
}
\]
Let $X'_- = X_- \setminus (-1,0] \times Y$. We have maps of pairs
\[
(X_- , Y) \to (X_-  , X'_- \cup Y) \leftarrow ( I \times Y ,  \partial I \times Y)
\]
where $I = [-1,0]$. The long exact sequences for these pairs then give a commutative diagram
\[
\xymatrix{
KR^{a-1}_c(Y, \mathcal{G}) \ar[rr]^-{\delta_{X_-,Y}} & & KR^a_c( X_- - Y, \mathcal{G}) \\
KR^{a-1}_c(X'_- , \mathcal{G}) \oplus KR^{a-1}_c(Y , \mathcal{G}) \ar[u]^{pr_2} \ar[d] \ar[rr]^-{\delta_{X_-,X'_- \cup Y}} & & KR^a_c( X_-  -( X'_- \cup Y) , \mathcal{G}) \ar[u] \ar[d]^-{\cong} \\
KR^{a-1}_c( \partial I \times Y , \mathcal{G}) \ar[rr]^-{\delta_{I \times Y , Y}} & & KR^a_c( (-1,0) \times Y , \mathcal{G}) \\
KR^{a-1}_c(Y, \mathcal{G}) \oplus KR^{a-1}_c(Y , \mathcal{G}) \ar[u]^-{\cong} & &
}
\]
The coboundary map $\delta_{I \times Y , Y}$ is of the form $(u,v) \mapsto (v-u)\tau$, where $\tau$ is the Thom class for $\mathbb{R} \times Y \to Y$. It follows that $\delta_{X_-,Y}$ is given by $v \mapsto v \tau$, followed by the extension by zero map $KR^a_c((-1,0) \times Y , \mathcal{G}) \to KR^a_c(X_- - Y, \mathcal{G}) \cong KR^a_c(U , \mathcal{G})$. This is the same as the map $v \mapsto i_* \iota_*(v)$.

The case $\delta_{X_+ , Y} : KR^{a-1}_c(Y, \mathcal{G}) \to KR^a_c(X_+ - Y , \mathcal{G})$ is almost identical, except that now since $Y$ is an ingoing boundary one finds that $\delta_{X_+,Y}$ is given by $v \mapsto - v \tau$, which is the same as the map $v \mapsto -j_*\iota_*(v)$. This proves commutativity of the leftmost square of the original diagram.
\end{proof}

Recall from Section \ref{sec:ggbpt} that the group of isomorphism classes of graded Real gerbes over a point is isomorphic to $\mathbb{Z}_4$ and is generated by $\mathcal{U} = L(\mathbb{R}_-)$, the lifting gerbe of $\mathbb{R}_-$. If $X$ is any space with an involution $\sigma$, then any point gerbe can be pulled back to $X$ via the map $X \to \{ pt \}$. Let $\mathcal{G}$ be any graded Real gerbe on $X$. The effect on twisted $KR$-theory of twisting by powers of $\mathcal{U}$ is a degree shift. This follows from a combination of the Thom isomorphism and $(1,1)$-periodicity of bigraded $KR$-theory:
\begin{align*}
KR^*(X , \mathcal{G} \otimes \mathcal{U}^{-p} ) &\cong KR^*(X , \mathcal{G} \otimes L(\mathbb{R}_-^p )^{-1} ) \\
& \cong KR^{*+p}( X \times D(\mathbb{R}_-^p) , X \times S(\mathbb{R}_-^p) , \mathcal{G} ) \\
& \cong KR^{*+2p}(X , \mathcal{G}).
\end{align*}

\section{Real $T$-duality}\label{sec:rtd}

\subsection{Real affine torus bundles}\label{sec:ratb}

Let $\Lambda$ be a lattice, i.e. a finitely generated free abelian group. Set $V = \Lambda \otimes_{\mathbb{Z}} \mathbb{R}$ and let $T$ be the torus $T = V/\Lambda$. Conversely given a torus $T$ we have $T \cong V/\Lambda$ where $V = H_1(T ; \mathbb{R}), \Lambda = H_1(T ; \mathbb{Z})$. A map $f : T_1 \to T_2$ of tori is said to be {\em affine} if $f^{-1}(0)f$ is a group homomorphism. Equivalently, writing $T_i = V_i/\Lambda_i$ then $f$ has the form $f(t) = \lambda( \widehat{t}) + u \; ({\rm mod} \; \Lambda_2)$ where $t \in T_i$, $\widehat{t}$ is a lift of $t$ to $V_1$, $\lambda \in Hom(\Lambda_1 , \Lambda_2)$ and $u \in T_2$. We usually write the group structure on tori using additive notation.

An {\em affine torus bundle} over $B$ with fibre $T = V/\Lambda$ is a torus bundle $\pi : X \to B$ whose fibres are $T$ and whose structure group is the group of affine automorphisms of $T$. The {\em monodromy local system} of $X$ is the local system $R^1 \pi_* \mathbb{Z}$ whose fibres are the first homology groups of the fibres of $X$. Fixing a basepoint $b \in B$ and an identification $T_b \cong T$ of the fibre of $X$ over $b$ with $T$, the monodromy local system is determined by the monodromy representation $\rho : \pi_1(B,b) \to GL(\Lambda)$. Given such a representation we write $\Lambda_\rho$ for the corresponding local system, thus if $\rho$ is the monodromy of $X$, then $R^1 \pi_* \mathbb{Z} \cong \Lambda_\rho$. In a similar fashion we have local systems $V_\rho, T_\rho$ corresponding to $R^1 \pi_* \mathbb{R}$ and $R^1 \pi_* (\mathbb{R}/\mathbb{Z})$.

Affine torus bundles are classified up to affine isomorphism by their monodromy $\rho$ and their first Chern class $c_1(X) \in H^2(B ; \Lambda_\rho)$ \cite{bar1}.

Let $\sigma$ be a Real structure on $B$. By a {\em Real structure} on an affine torus bundle $\pi : X \to B$, we mean a lift of $\sigma$ to an involution on $X$ which is affine on the fibres. Associated to a Real affine torus bundle is the equivariant local system $R^1 \pi_* \mathbb{Z}$. Let $\Gamma$ denote the orbifold fundamental group of $B/\!/ \langle \sigma \rangle$ based at $b$. This can be defined as the fundamental group of the Borel construction $X_{\mathbb{Z}_2} = X \times_{\mathbb{Z}_2} B\mathbb{Z}_2$. The Borel fibration $X \to X_{\mathbb{Z}_2} \to B$ yields a short exact sequence
\[
1 \to \pi_1(B) \to \Gamma \buildrel \epsilon \over \longrightarrow \mathbb{Z}_2 \to 1.
\]
The equivariant local system $R^1 \pi_* \mathbb{Z}$ is determined up to isomorphism by its monodromy representation $\rho : \Gamma \to GL(\Lambda)$. By a straightforward extension of \cite[\textsection 3]{bar1} to the Real setting, one has that Real affine torus bundles are classified up to isomorphism by their monodromy $\rho : \Gamma \to GL(\Lambda)$ and (equivariant) first Chern class $c_1(X) \in H^2_{\mathbb{Z}_2}( B ; \Lambda_\rho)$.

\begin{example}\label{ex:Bpt}
If $B = \{ pt \}$ is a point, then a Real affine torus bundle with fibre $T$ is equivalent to giving an affine involution $\sigma : T \to T$. The monodromy representation $\rho : \mathbb{Z}_2 \to GL(\Lambda)$ is the linear part of $\sigma$. Let us denote this by $\sigma_0 : \Lambda \to \Lambda$. The first Chern class has the form
\[
c \in H^2_{\mathbb{Z}_2}(pt ; \Lambda_\rho) \cong H^1_{\mathbb{Z}_2}( pt ; T_\rho) \cong \frac{ \{ c \in T \; | \; c + \sigma_0(c) = 0  \} }{ \{ c = u - \sigma_0(u) \; | \; u \in T \} }.
\]
Given a class in $H^2_{\mathbb{Z}_2}(pt ; \Lambda_\rho) \cong H^1_{\mathbb{Z}_2}(pt ; T_\rho)$ represented by $c \in T$ satisfying $c + \sigma_0(c) = 0$, the corresponding affine involution is $\sigma(t) = \sigma_0(t) + c$. This is an involution because $\sigma_0(c) + c = 0$.

Any action of $\mathbb{Z}_2 = \langle \sigma \rangle$ on a finite rank free abelian group $\Lambda$ is isomorphic to a direct sum of indecomposable modules, each of which is isomorphic to one of three types: the trivial representation $\mathbb{Z}$, the cyclotomic representation $\mathbb{Z}_-$ (where $\sigma$ acts as $-1$) of the regular representation $R = \mathbb{Z}^2$ where $\sigma(x,y) = (y,x)$. Thus $\Lambda_\rho$ can be decomposed into a direct sum of these three types. If $\Lambda_\rho = \bigoplus_i \Lambda_{\rho_i}$ is such a decomposition, then $H^2_{\mathbb{Z}_2}(pt ; \Lambda_\rho) \cong \bigoplus_i H^2_{\mathbb{Z}_2}(pt ; \Lambda_{\rho_i})$, hence the first Chern class similarly decomposes. It follows that $(T , \sigma)$ can be decomposed into a product $(T , \sigma) = \prod_i (T_i , \sigma_i)$. Therefore any affine involution on a torus decomposes into a product of affine involutions $(T_i , \sigma_i)$, where the linear part of $\sigma_i$ is either the trivial, cyclotomic, or regular representation. Let us consider the possible Chern classes in these cases. We have
\[
H^2_{\mathbb{Z}_2}(pt ; \mathbb{Z}) \cong \mathbb{Z}_2, \quad H^2_{\mathbb{Z}_2}(pt ; \mathbb{Z}_-) \cong H^2_{\mathbb{Z}_2}(pt ; R) \cong 0.
\]
Thus in the trivial case, there are two possibilities depending on whether $c$ is zero or non-zero, while in the cyclotomic and regular cases we always have $c=0$. These four cases can be explicitly described as follows:
\begin{itemize}
\item[(1)]{$\Lambda_\rho = \mathbb{Z}$, $c=0$. Then $T = \mathbb{R}/\mathbb{Z}$ and $\sigma(t) = t$ is the identity map.}
\item[(2)]{$\Lambda_\rho = \mathbb{Z}$, $c \neq 0$. Then $T = \mathbb{R}/\mathbb{Z}$ and $\sigma(t) = t + 1/2$.}
\item[(3)]{$\Lambda_\rho = \mathbb{Z}_-$. Then $T = \mathbb{R}/\mathbb{Z}$ and $\sigma(t) = -t$.}
\item[(4)]{$\Lambda_\rho = R$. Then $T = \mathbb{R}^2/\mathbb{Z}^2$ and $\sigma(t_1,t_2) = (t_2,t_1)$.}
\end{itemize}

\end{example}

\subsection{Affine gerbes}\label{sec:ag}

Let $\pi : X \to B$ be a Real affine torus bundle. Associated to $X$ we define a sheaf $\mathcal{A}(X)$ on $B$ by letting $(\mathcal{A}(X))(U)$ be the set of fibrewise affine functions $f : \pi^{-1}(U) \to S^1$. This is an abelian group under pointwise multiplication and it follows that $\mathcal{A}(X)$ is a sheaf of abelian groups. From its definition there is an evident inclusion map $i : \mathcal{A}(X) \to \pi_*( \mathcal{C}_X(S^1) )$, where $\mathcal{C}_X(S^1)$ denotes the sheaf of continuous $S^1$-valued functions on $X$.

Let $\mathcal{U} = \{ U_i \}_{i \in I}$ be an equivariant open cover of $B$. We get an induced equivariant cover $\pi^{-1}(\mathcal{U}) = \{ \pi^{-1}(U_i) \}_{i \in I}$ of $X$ by taking preimages. An {\em affine graded Real gerbe} on $X$ (with respect to $\mathcal{U}$) is a gerbe $\mathcal{G} = (L , \lambda)$ such that $L \to \coprod_{i,j} \pi^{-1}(U_{ij})$ is the trivial line bundle $L = \mathbb{C}$ (possibly with a non-trivial grading) and $\lambda_{ijk} : \pi^{-1}(U_{ijk}) \to S^1$ is fibrewise affine. Note that compatibility of $\lambda$ with the Real structure $\sigma$ implies that $\sigma^*( \lambda_{ijk}) = \overline{\lambda}_{ijk}$. Thus $\lambda_{ijk}$ can be regarded as a \v{C}ech $2$-cocycle valued in $\mathcal{A}(X)_-$.

We will say that a graded Real gerbe on $X$ is {\em affine} if it is isomorphic to an affine graded Real gerbe with respect to some pullback cover $\pi^{-1}(\mathcal{U})$. It follows easily that a graded Real gerbe $\mathcal{G}$ is affine if and only if its Dixmier--Douady class lies in the image of the natural map
\[
\pi^* : H^1_{\mathbb{Z}_2}( B ; \mathbb{Z}_2) \oplus H^2_{\mathbb{Z}_2}(B ; \mathcal{A}(X)_-) \to H^1_{\mathbb{Z}_2}(X ; \mathbb{Z}_2) \oplus H^3(X ; \mathbb{Z}_-)
\]
where on the first summand, $\pi^*$ is pullback and on the second summand $\pi^*$ is the composition
\[
H^2_{\mathbb{Z}_2}(B ; \mathcal{A}(X)_-) \buildrel i \over \longrightarrow H^2_{\mathbb{Z}_2}(B ; \pi_*(\mathcal{C}_X(S^1))_- ) \to H^2_{\mathbb{Z}_2}(X ; (\mathcal{C}_X(S^1))_-) \cong H^3_{\mathbb{Z}_2}(X ; \mathbb{Z}_-).
\]

Affine functions can be decomposed into a linear and constant term, giving rise to a short exact sequence
\[
0 \to \mathcal{C}_B(S^1) \to \mathcal{A}(X) \buildrel \lambda \over \longrightarrow \Lambda_\rho^* \to 0
\]
and similarly
\[
0 \to \mathcal{C}_B(S^1)_- \to \mathcal{A}(X)_- \buildrel \lambda \over \longrightarrow (\Lambda_{\rho^\vee}^*) \to 0
\]
where we define $\rho^\vee : \Gamma \to GL(\Lambda^*)$  by $\rho^{\vee}(g) = \epsilon(g) \rho^*(g)$, or equivalently, $\Lambda^*_{\rho^\vee} = (\Lambda_\rho)^*_-$. This gives a long exact sequence in cohomology, part of which is as follows
\[
H^2_{\mathbb{Z}_2}( B ; \mathcal{C}_B(S^1)_-) \buildrel \pi^* \over \longrightarrow H^2_{\mathbb{Z}_2}( B ; \mathcal{A}(X)_-) \buildrel \lambda \over \longrightarrow H^2_{\mathbb{Z}_2}(B ; \Lambda_{\rho^\vee}^* ).
\]

As mentioned above, there is a natural inclusion $i : \mathcal{A}(X) \to \pi_*( \mathcal{C}_X(S^1) )$. Moreover, we have:
\begin{proposition}\label{prop:affgrbiso}
The inclusion $i : \mathcal{A}(X) \to \pi_*( \mathcal{C}_X(S^1) )$ induces isomorphisms $H^j_{\mathbb{Z}_2}( B ; \mathcal{A}(X) ) \to H^j_{\mathbb{Z}_2}( B ; \pi_*(\mathcal{C}_X(S^1)))$ and $H^j_{\mathbb{Z}_2}( B ; \mathcal{A}(X)_- ) \to H^j_{\mathbb{Z}_2}( B ; \pi_*(\mathcal{C}_X(S^1))_-)$ for all $j > 0$.
\end{proposition}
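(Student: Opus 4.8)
The plan is to compare the two sheaves via a long exact sequence of sheaf cohomology and to show that the ``difference'' between them has vanishing higher equivariant cohomology. Recall the short exact sequence of sheaves on $B$
\[
0 \to \mathcal{C}_B(S^1) \to \mathcal{A}(X) \buildrel \lambda \over \longrightarrow \Lambda_\rho^* \to 0
\]
(and its $\mathbb{Z}_-$-twisted analogue). There is a parallel short exact sequence for $\pi_*(\mathcal{C}_X(S^1))$: restricting a continuous $S^1$-valued function on $\pi^{-1}(U)$ to a fibre and taking its homotopy class (equivalently, its winding numbers along the lattice directions) gives a surjection $\pi_*(\mathcal{C}_X(S^1)) \to \Lambda_\rho^*$ whose kernel consists of those fibrewise functions that are fibrewise null-homotopic. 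On a contractible $U$ over which $X$ trivialises, such a function lifts to an $\mathbb{R}$-valued function on $\pi^{-1}(U) \cong U \times T$, and because $\pi$ has contractible fibres in the relevant sense one can show this kernel sheaf is $\pi_*(\mathcal{C}_X(\mathbb{R}))/\mathcal{C}_B(\mathbb{R})$ — but more usefully, the kernel is soft (it is a module over the soft sheaf $\mathcal{C}_B(\mathbb{R})$, since one can multiply by bump functions on the base), hence has vanishing cohomology in positive degrees, and the same holds for its $\mathbb{Z}_-$ twist. Thus in the map of short exact sequences induced by $i$, the inclusion $\mathcal{C}_B(S^1) \hookrightarrow \ker(\pi_*\mathcal{C}_X(S^1)\to\Lambda_\rho^*)$ — whose cokernel is $\pi_*(\mathcal{C}_X(\mathbb{R}))/\mathcal{C}_B(\mathbb{R})$, again soft — induces isomorphisms on $H^j_{\mathbb{Z}_2}$ for $j>0$, by the long exact cohomology sequence.

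More concretely, the key step is to identify the cokernel sheaf $\mathcal{Q}$ of $i : \mathcal{A}(X) \to \pi_*(\mathcal{C}_X(S^1))$ (and of $i : \mathcal{A}(X)_- \to \pi_*(\mathcal{C}_X(S^1))_-$) and to show $H^j_{\mathbb{Z}_2}(B;\mathcal{Q}) = 0$ for all $j \geq 0$; the Proposition then follows from the long exact sequence. Since both $\mathcal{A}(X)$ and $\pi_*(\mathcal{C}_X(S^1))$ surject onto $\Lambda_\rho^*$ compatibly with $i$, the snake lemma identifies $\mathcal{Q}$ with the cokernel of $\mathcal{C}_B(S^1) \hookrightarrow K$, where $K = \ker(\pi_*(\mathcal{C}_X(S^1)) \to \Lambda_\rho^*)$. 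I claim $\mathcal{Q}$ is the sheaf $U \mapsto \{$fibrewise functions $f : \pi^{-1}(U)\to S^1$, null-homotopic on each fibre, modulo those pulled back from $B\}$, and that this is a module over $\mathcal{C}_B(\mathbb{R})$ — one can multiply such an $f$ pointwise by $e^{2\pi i t \phi(b)}$ for $\phi$ a function on the base, or rather one should argue that $\mathcal{Q}$ is a quotient of the soft sheaf $\pi_*(\mathcal{C}_X(\mathbb{R}))$ (fibrewise real-valued functions), which is soft because $B$ is paracompact and one can patch using a partition of unity on $B$ pulled back to $X$. A quotient of a soft sheaf by a subsheaf is soft when the quotient map is ``locally liftable'', which holds here because the fibres of $\pi$ are compact. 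Softness gives the vanishing of $H^j_{\mathbb{Z}_2}(B;\mathcal{Q})$ for $j>0$ (equivariant cohomology with coefficients in a soft sheaf, or rather: soft sheaves are acyclic for $\Gamma$-equivariant cohomology since the Borel construction $B_{\mathbb{Z}_2}$ retains paracompactness and the averaging/partition-of-unity argument applies); for the twisted version $\mathcal{Q}_-$ note $\mathcal{Q}_- = \mathcal{Q}\otimes_{\mathbb{Z}}\mathbb{Z}_-$ is still soft as it is locally isomorphic to $\mathcal{Q}$.

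The main obstacle I anticipate is the careful identification of the cokernel sheaf $\mathcal{Q}$ and the verification that it is soft — in particular, confirming that a fibrewise continuous $S^1$-valued function which is null-homotopic on each fibre admits a fibrewise $\mathbb{R}$-valued logarithm \emph{locally on the base} (this uses that $\pi$ is a locally trivial bundle with fibre a compact torus, so over a contractible $U$ one has $\pi^{-1}(U)\cong U\times T$ and the lift exists on all of $U\times T$ since $U\times T$ is paracompact and the obstruction to lifting lives in $H^1(U\times T;\mathbb{Z})=H^1(T;\mathbb{Z})$, which is killed precisely by the fibrewise null-homotopy hypothesis). Once this local lifting is in hand, softness of $\mathcal{Q}$ and hence the cohomology vanishing is routine, and the Proposition follows immediately by comparing long exact sequences. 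I should also remark that the hypothesis $j>0$ is essential: on $H^0$ the map $i$ is the genuine inclusion of affine functions into all fibrewise functions and is very far from surjective, consistent with $H^0_{\mathbb{Z}_2}(B;\mathcal{Q})\neq 0$ in general.
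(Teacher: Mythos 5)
Your overall route is the same as the paper's (compare both sheaves via the short exact sequences $0 \to \mathcal{C}_B(S^1) \to \mathcal{A}(X) \to \Lambda_\rho^* \to 0$ and $0 \to K \to \pi_*\mathcal{C}_X(S^1) \to \Lambda_\rho^* \to 0$, and reduce to acyclicity of fine/soft sheaves), but your write-up contains a genuine error and a gap. In the first paragraph you assert that the kernel sheaf $K = \ker(\pi_*\mathcal{C}_X(S^1) \to \Lambda_\rho^*)$ is $\pi_*(\mathcal{C}_X(\mathbb{R}))/\mathcal{C}_B(\mathbb{R})$ and is soft. That is wrong on both counts: $K$ is the sheaf of fibrewise null-homotopic circle-valued functions, which locally lift to $\pi_*\mathcal{C}_X(\mathbb{R})$ uniquely up to an integer, so $K \cong \pi_*(\mathcal{C}_X(\mathbb{R}))/\mathbb{Z}$. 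This is an extension of $\mathbb{Z}$ by a fine sheaf, hence $H^j_{\mathbb{Z}_2}(B;K) \cong H^{j+1}_{\mathbb{Z}_2}(B;\mathbb{Z})$ for $j \ge 1$; it is very much not soft. The sheaf $\pi_*(\mathcal{C}_X(\mathbb{R}))/\mathcal{C}_B(\mathbb{R})$ is instead the cokernel $\mathcal{Q}$ of $i$, which is what your second paragraph (correctly) identifies via the snake lemma.

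Second, your stated goal "show $H^j_{\mathbb{Z}_2}(B;\mathcal{Q}) = 0$ for all $j \ge 0$" is false at $j=0$, as you acknowledge in the final sentence but do not reconcile. With vanishing only in degrees $j>0$, the long exact sequence of $0 \to \mathcal{A}(X) \to \pi_*\mathcal{C}_X(S^1) \to \mathcal{Q} \to 0$ gives the claimed isomorphism for $j \ge 2$ outright and surjectivity for $j=1$; injectivity at $j=1$ requires the additional fact that $\Gamma(\pi_*\mathcal{C}_X(S^1)) \to \Gamma(\mathcal{Q})$ is surjective. This does hold — $\Gamma(\pi_*\mathcal{C}_X(\mathbb{R})) \to \Gamma(\mathcal{Q})$ is surjective because $\mathcal{C}_B(\mathbb{R})$ is fine, and that map factors through $\pi_*\mathcal{C}_X(S^1)$ via $\exp$ — but it is not automatic and needs to be said. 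The paper avoids this edge case cleanly by instead comparing $\mathcal{C}_B(S^1)$ and $Q = \pi_*\mathcal{C}_X(\mathbb{R})/\mathbb{Z}$ as $\mathbb{Z}$-quotients of fine sheaves (so $H^j$ of each is $H^{j+1}_{\mathbb{Z}_2}(B;\mathbb{Z})$ for $j\ge 1$, compatibly), and then applying the five-lemma to the maps of short exact sequences over $\Lambda_\rho^*$. Your "quotient of soft by soft is soft" step and the observation that real-valued (hence $\mathbb{R}$-module) fine sheaves remain acyclic equivariantly are both fine; once the two issues above are patched, the argument becomes correct and is essentially equivalent to the paper's.
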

\begin{proof}
We give the proof for the case $\mathcal{A}(X) \to \pi_*( \mathcal{C}_X(S^1) )$. The case $\mathcal{A}(X)_- \to \pi_*( \mathcal{C}_X(S^1) )_-$ is similar.

Start with the short exact sequence of equivariant sheaves on $X$ given by 
\[
0 \to \mathbb{Z} \to \mathcal{C}_X(\mathbb{R}) \to \mathcal{C}_X(S^1) \to 0
\]
where $\mathcal{C}_X(\mathbb{R})$ denotes the sheaf of continuous $\mathbb{R}$-valued functions. Apply the pushforward $\pi_*$ and its derived functors to get a long exact sequence
\[
0 \to \mathbb{Z} \to \pi_*(\mathcal{C}_X(\mathbb{R})) \to \pi_*(\mathcal{C}_X(S^1)) \to R^1 \pi_* \mathbb{Z} \to R^1 \pi_* \mathcal{C}_X(\mathbb{R}) \to \cdots
\]
We have that $R^1 \pi_* \mathcal{C}_X(\mathbb{R}) = 0$ because $\mathcal{C}_X(\mathbb{R})$ is a fine sheaf. We also have that $R^1 \pi_* \mathbb{Z} = \Lambda^*_\rho$, so the above long exact sequence gives a four-term exact sequence
\[
0 \to \mathbb{Z} \to \pi_*(\mathcal{C}_X(\mathbb{R})) \to \pi_*(\mathcal{C}_X(S^1)) \to \Lambda^*_\rho \to 0.
\]
The sheaf $\mathcal{A}(X)$ fits into a similar four term exact sequence
\[
0 \to \mathbb{Z} \to \mathcal{C}_B(\mathbb{R}) \to \mathcal{A}(X) \to \Lambda^*_\rho \to 0
\]
and the inclusion map $i : \mathcal{A}(X) \to \pi_*(\mathcal{C}_X(S^1))$ can be completed to a commutative diagram
\[
\xymatrix{
0 \ar[r] & \mathbb{Z} \ar[r]^-{\pi^*} & \pi_*(\mathcal{C}_X(\mathbb{R})) \ar[r] & \pi_*(\mathcal{C}_X(S^1)) \ar[r]^-{\lambda} & \Lambda^*_\rho \ar[r] & 0 \\
0 \ar[r] & \mathbb{Z} \ar@{=}[u] \ar[r] & \mathcal{C}_B(\mathbb{R}) \ar[u]^-{i} \ar[r] & \mathcal{A}(X) \ar[r]^-{\lambda} \ar[u]^-{i} & \Lambda^*_\rho \ar[r] \ar@{=}[u] & 0 
}
\]
Letting $Q$ be the quotient sheaf $Q = \pi_*(\mathbb{C}_X(\mathbb{R}))/\pi^*(\mathbb{Z})$, the above diagram induces a commutative diagram with exact rows
\[
\xymatrix{
0 \ar[r] & Q \ar[r] & \pi_*(\mathcal{C}_X(S^1)) \ar[r]^-{\lambda} & \Lambda^*_\rho \ar[r] & 0 \\
0 \ar[r] & \mathcal{C}_B(S^1) \ar[u]^i \ar[r] & \mathcal{A}(X) \ar[u]^-{i} \ar[r]^-{\lambda} & \Lambda^*_\rho \ar[r] \ar@{=}[u] & 0
}
\]
Taking cohomology gives a commutative diagram with exact rows:
\[
\xymatrix{
\cdots \ar[r] & H^j_{\mathbb{Z}_2}(B ; Q) \ar[r] & H^j_{\mathbb{Z}_2}(B ; \pi_*(\mathcal{C}_X(\mathbb{R}))) \ar[r]^-{\lambda} & H^j_{\mathbb{Z}_2}(B ; \Lambda^*_\rho) \ar[r] & \cdots \\
\cdots \ar[r] & H^j_{\mathbb{Z}_2}(B ; \mathcal{C}_B(S^1)) \ar[u]^-{i} \ar[r] & H^j_{\mathbb{Z}_2}( B ; \mathcal{A}(X) ) \ar[r]^-{\lambda} \ar[u]^-{i} & H^j_{\mathbb{Z}_2}(B ; \Lambda^*_\rho) \ar@{=}[u] \ar[r] & \cdots 
}
\]
We claim that the inclusion map $i : \mathcal{C}_B(S^1) \to Q$ induces an isomorphism $H^j_{\mathbb{Z}_2}(B ; \mathcal{C}_B(S^1) ) \to H^j_{\mathbb{Z}_2}(B ; Q)$ for each $j > 0$. Given this, it follows from the above diagram and the five-lemma that $i : H^j_{\mathbb{Z}_2}( B ; \mathcal{A}(X) ) \to H^j_{\mathbb{Z}_2}( B ; \pi_*(\mathcal{C}_X(S^1)))$ is an isomorphism for each $j > 0$.

To prove the claim, consider the commutative diagram with exact rows:
\[
\xymatrix{
0 \ar[r] & \mathbb{Z} \ar[r] & \pi_*(\mathcal{C}_X(\mathbb{R})) \ar[r] & Q \ar[r] & 0 \\
0 \ar[r] & \mathbb{Z} \ar@{=}[u] \ar[r] & \mathcal{C}_B(\mathbb{R}) \ar[r] \ar[u]^-{i} & \mathcal{C}_B(S^1) \ar[u]^-{i} \ar[r] & 0
}
\]
The claim now follows by taking long exact sequences in cohomology and using that $\pi_*(\mathcal{C}_X(\mathbb{R}))$ and $\mathcal{C}_B(\mathbb{R})$ are fine sheaves.
\end{proof}

According to Proposition \ref{prop:affgrbiso}, we could have equivalently defined Real affine gerbes to be those Real gerbes whose Dixmier--Douady class lies in the image of the natural map 
\[
\pi^* : H^1_{\mathbb{Z}_2}(B ; \mathbb{Z}_2) \oplus H^2_{\mathbb{Z}_2}( B ; \pi_*(\mathcal{C}_X(S^1))_-) \to H^1_{\mathbb{Z}_2}(X ; \mathbb{Z}_2) \oplus H^2_{\mathbb{Z}_2}( X ; \mathcal{C}_X(S^1)_-).
\]

Let $\pi : X \to B$ be a Real affine torus bundle. Recall again the short exact sequence $0 \to \mathcal{C}_B(S^1)_- \to \mathcal{A}(X)_- \to (\Lambda_\rho)^*_- \to 0$. Noting that $H^i_{\mathbb{Z}_2}(B ; \mathcal{C}_B(S^1)_-) \cong H^{i+1}_{\mathbb{Z}_2}(B ; \mathbb{Z}_-)$ for $i>0$, the associated long exact sequence has the form:
\[
\cdots H^3_{\mathbb{Z}_2}(B ; \mathbb{Z}_-) \to H^2_{\mathbb{Z}_2}( B ; \mathcal{A}(X)_-) \buildrel \lambda \over \longrightarrow H^2_{\mathbb{Z}_2}( B ; (\Lambda_\rho)^*_-) \buildrel \delta \over \longrightarrow H^4_{\mathbb{Z}_2}(B ; \mathbb{Z}_-) \to \cdots
\]

\begin{proposition}\label{prop:cup}
The coboundary map $\delta : H^2_{\mathbb{Z}_2}(B ; (\Lambda_\rho)^*_- ) \to H^4_{\mathbb{Z}_2}(B ; \mathbb{Z}_-)$ is given by $\delta(\alpha) = \langle c , \alpha \rangle$, the composition of the cup product $\alpha \mapsto c \smallsmile \alpha \in H^4_{\mathbb{Z}_2}(B ; (\Lambda_\rho)^*_- \otimes \Lambda_\rho)$ with the pairing $\langle \; , \; \rangle : (\Lambda_\rho)^*_- \otimes \Lambda_\rho \to \mathbb{Z}_-$.
\end{proposition}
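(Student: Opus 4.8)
The plan is to recognise $\delta$ as the Yoneda (cup) product with the extension class of the short exact sequence of ($\mathbb{Z}_2$-equivariant) sheaves
\[
0 \to \mathcal{C}_B(S^1)_- \to \mathcal{A}(X)_- \buildrel \lambda \over \longrightarrow (\Lambda_\rho)^*_- \to 0,
\]
and then to identify that extension class with $c_1(X)$.

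Recall first the general fact that the connecting homomorphism of a short exact sequence $0 \to A \to E \to C \to 0$ of sheaves is given by Yoneda product with the extension class $e \in \mathrm{Ext}^1(C,A)$. Here $C = (\Lambda_\rho)^*_-$ is a local system of finite rank free abelian groups, hence locally free, so $\mathrm{Ext}^1(C,A) \cong H^1_{\mathbb{Z}_2}(B ; \mathcal{H}om(C,A))$ and the Yoneda product becomes cup product with $e$ followed by the map induced by the evaluation pairing $\mathcal{H}om(C,A) \otimes C \to A$. In our case $\mathcal{H}om((\Lambda_\rho)^*_- , \mathcal{C}_B(S^1)_-)$ is canonically $\Lambda_\rho \otimes \mathcal{C}_B(S^1)$ (the two $\mathbb{Z}_-$ twists cancel) and the evaluation pairing is induced by $\langle \, , \, \rangle : \Lambda_\rho \otimes (\Lambda_\rho)^*_- \to \mathbb{Z}_- \hookrightarrow \mathcal{C}_B(S^1)_-$. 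So $\delta(\alpha)$ is the image of $e \smallsmile \alpha$ under this pairing, where now $e \in H^1_{\mathbb{Z}_2}(B ; \Lambda_\rho \otimes \mathcal{C}_B(S^1))$; recall also that the isomorphism $H^1_{\mathbb{Z}_2}(B ; \Lambda_\rho \otimes \mathcal{C}_B(S^1)) \cong H^2_{\mathbb{Z}_2}(B ; \Lambda_\rho)$ built into the statement of the proposition (and analogously $H^3_{\mathbb{Z}_2}(B ; \mathcal{C}_B(S^1)_-) \cong H^4_{\mathbb{Z}_2}(B ; \mathbb{Z}_-)$) comes from the exponential sequence tensored with the relevant coefficient sheaf, whose middle term is fine.

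The main step is to show that $e$ corresponds to $c_1(X)$ under $H^1_{\mathbb{Z}_2}(B ; \Lambda_\rho \otimes \mathcal{C}_B(S^1)) \cong H^2_{\mathbb{Z}_2}(B ; \Lambda_\rho)$. This is a \v{C}ech computation: choose an equivariant cover $\{U_i\}$ trivialising $X$, with affine transitions $t_i = \rho_{ij}(t_j) + b_{ij}$, so that $\{b_{ij}\}$ is a $1$-cocycle valued in $\Lambda_\rho \otimes \mathcal{C}_B(S^1)$ (continuous sections of $T_\rho$) whose image in $H^2_{\mathbb{Z}_2}(B ; \Lambda_\rho)$ is by definition $c_1(X)$. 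A trivialisation of $X|_{U_i}$ splits $\mathcal{A}(X)_-|_{U_i} \cong \mathcal{C}_B(S^1)_- \oplus \underline{(\Lambda_\rho)^*_-}$ by $f \mapsto (f|_{\text{zero section}} , \lambda(f))$, and comparing two such splittings over $U_{ij}$ gives a triangular transition matrix whose off-diagonal entry is exactly $\langle \, \cdot \, , b_{ij} \rangle$; hence the extension class is $[\{b_{ij}\}]$, as required. Combining this with the previous paragraph, together with the compatibility of cup product with the connecting maps of the exponential sequence, gives $\delta(\alpha) = \langle c , \alpha \rangle$.

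The hard part is really the bookkeeping of signs and monodromy conventions: one must check that the various twists (the $\mathbb{Z}_-$'s, and the $\epsilon$ in $\rho^\vee = \epsilon\, \rho^*$) and the distinction between $\rho$, $\rho^*$ and $\rho^\vee$ are handled consistently through the two exponential connecting isomorphisms, the cup product and the evaluation pairing, so that no spurious sign appears. If one prefers to avoid the Yoneda-product formalism, the same conclusion follows from a direct equivariant \v{C}ech calculation: represent $\alpha$ by a cocycle $\alpha_{ijk} \in \Lambda^*$, lift each $\alpha_{ijk}$ to the affine function on $\pi^{-1}(U_{ijk})$ that is linear with vanishing constant term in the $i$-th trivialisation, apply $\delta$ and observe that the linear parts cancel by the cocycle identity for $\alpha$, leaving the $\mathcal{C}_B(S^1)_-$-valued $3$-cochain $b \mapsto \langle \alpha_{jkl} , b_{ji}(b) \rangle$; lifting $b_{ji}$ locally to $\mathbb{R}$-valued functions then exhibits its image in $H^4_{\mathbb{Z}_2}(B ; \mathbb{Z}_-)$ as $\langle c , \alpha \rangle$.
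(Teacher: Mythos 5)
Your proof is correct and uses essentially the same strategy as the paper: the coboundary map of the locally‑split extension $0 \to \mathcal{C}_B(S^1)_- \to \mathcal{A}(X)_- \to (\Lambda_\rho)^*_- \to 0$ is cup product with its extension class in $H^1_{\mathbb{Z}_2}(B;\mathcal{H}om((\Lambda_\rho)^*_-,\mathcal{C}_B(S^1)_-)) \cong H^1_{\mathbb{Z}_2}(B;\mathcal{C}_B(T_\rho))$, which corresponds to $c$ under the exponential isomorphism $H^1_{\mathbb{Z}_2}(B;\mathcal{C}_B(T_\rho)) \cong H^2_{\mathbb{Z}_2}(B;\Lambda_\rho)$, whence the formula $\delta(\alpha)=\langle c,\alpha\rangle$. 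The only difference is placement of detail: you spell out the \v{C}ech identification of the extension class with $c$ (which the paper merely asserts ``follows easily from the definition of $\mathcal{A}(X)$'') and compress the final step into a reference to compatibility of cup product with the exponential connecting map, whereas the paper verifies that last step by an explicit cochain computation; one small notational slip in your writeup is the ``$\mathbb{Z}_- \hookrightarrow \mathcal{C}_B(S^1)_-$'' describing the evaluation pairing (there is no such inclusion — the pairing is $(\Lambda_\rho\otimes\mathcal{C}_B(S^1))\otimes(\Lambda_\rho)^*_- \to \mathcal{C}_B(S^1)_-$), but this does not affect the argument.
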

\begin{proof}
First note that $\mathcal{H}om( (\Lambda_\rho)^*_- , \mathcal{C}_B(S^1)_- ) \cong \mathcal{C}_B(T_\rho) \cong (\mathcal{C}_B(V_\rho))/\Lambda_\rho$. The sheaf $\mathcal{A}(X)_-$ is an extension of $(\Lambda_\rho)^*_-$ by $\mathcal{C}_B(S^1)_-$ which is locally split, hence is classified by a class in $H^1_{\mathbb{Z}_2}( B ; \mathcal{C}_B(T_\rho) )$. In fact it follows easily from the definition of $\mathcal{A}(X)$ that the extension class is given by the first Chern class $c \in H^2_{\mathbb{Z}_2}( B ; \Lambda_\rho) \cong H^1_{\mathbb{Z}_2}( B ; \mathcal{C}_B(T_\rho))$. Let $t$ be the class in $H^1_{\mathbb{Z}_2}(B ; \mathcal{C}_B(T_\rho))$ such that $c = \delta t$. Therefore the coboundary map $\delta : H^2_{\mathbb{Z}_2}(B ; (\Lambda_\rho)^*_-) \to H^3_{\mathbb{Z}_2}( B ; \mathcal{C}_B(S^1)_-)$ is given by the cup product $\alpha \mapsto t \smallsmile \alpha$. More explicitly, if $t = [ t_{ij} ]$, where $t_{ij}$ is valued in $\mathcal{H}om( (\Lambda_\rho)^*_- , \mathcal{C}_B(S^1)_- )$ and $\alpha = [\alpha_{ijk}]$ where $\alpha_{ijk}$ is valued in $(\Lambda_\rho)^*_-$, then $t \smallsmile \alpha$ is given by the cocycle $t_{ij} \alpha_{jkl}$.

Let $\widetilde{t}_{ij}$ be lifts of $t_{ij}$ to $\mathcal{C}_B(V_\rho)$. Then $c = \delta t$ is given by $c_{ijk} = \widetilde{t}_{jk} - \widetilde{t}_{ik} + \widetilde{t}_{ij}$. Under $H^3_{\mathbb{Z}_2}(B ; \mathcal{C}_B(S^1)_-) \cong H^4_{\mathbb{Z}_2}(B ; \mathbb{Z}_-)$ we have that $t_{ij}\alpha_{jkl}$ is sent to
\begin{align*}
(\delta(\widetilde{t}\alpha))_{ijklm} &= \widetilde{t}_{jk}\alpha_{klm} - \widetilde{t}_{ik}\alpha_{klm} + \widetilde{t}_{ij}\alpha_{jlm} - \widetilde{t}_{ij}\alpha_{jkm} + \widetilde{t}_{ij}\alpha_{jkl} \\
&= (\widetilde{t}_{jk} - \widetilde{t}_{ik} + \widetilde{t}_{ij})\alpha_{klm} - \widetilde{t}_{ij}( \alpha_{klm} - \alpha_{jlm} + \alpha_{jkm} - \alpha_{jkl} ) \\
&= c_{ijk}\alpha_{klm}.
\end{align*}
Hence $\delta(\alpha) = \langle c , \alpha \rangle$.
\end{proof}

\subsection{Existence of $T$-duals}\label{sec:etd}

Let $\Lambda, \widehat{\Lambda}$ be free abelian groups of equal rank. Let $V = \Lambda \otimes_{\mathbb{Z}} \mathbb{R}$, $\widehat{V} = \widehat{\Lambda} \otimes_{\mathbb{Z}} \mathbb{R}$, let $T = V/\Lambda$ and $\widehat{T} = \widehat{V}/\widehat{\Lambda}$. Let $\pi : X \to B$, $\widehat{\pi} : \widehat{X} \to B$ be affine torus bundles where the fibres of $X$ are modelled on the torus $T = V/\Lambda$ and the fibres of $\widehat{X}$ are modelled on the dual torus $\widehat{T}$. Let $C = X \times_B \widehat{X}$ be the fibre product. Observe that $C$ is itself and affine torus bundle over $B$ with fibres modelled on $T \times \widehat{T}$. Let $p : C \to X$, $\widehat{p} : C \to \widehat{X}$ be the projection maps.

Let $\mathcal{G}, \widehat{\mathcal{G}}$ be affine graded Real gerbes on $X$ and $\widehat{X}$. We say that the pairs $(X , \mathcal{G})$, $(\widehat{X} , \widehat{\mathcal{G}})$ are {\em Real $T$-dual} if there is an isomorphism $\gamma : p^*(\mathcal{G}) \to \widehat{p}^*(\widehat{\mathcal{G}})$ which satisfies the {\em Poincar\'e property}, which we now describe. Since $\mathcal{G}, \widehat{\mathcal{G}}$ are affine, they are necessarily trivialisable along the fibres of $X$ and $\widehat{X}$ ({\em without} Real structure). Given $b \in B$, let $T_b, \widehat{T}_b$ denote the fibres of $X$ and $\widehat{X}$ over $b$. Note that the fibre of $C$ over $b$ is $T_b \times \widehat{T}_b$. Choose trivialisations
\[
\psi_b : 1 \to \mathcal{G}|_{T_b}, \quad \widehat{\psi}_b : 1 \to \widehat{\mathcal{G}}|_{\widehat{T}_b}
\] 
of $\mathcal{G}, \widehat{\mathcal{G}}$ over $b$ (here $1$ denotes the trivial gerbe). On $T_b \times \widehat{T}_b$ we have an automorphism of the trivial gerbe
\[
\xymatrix{
1 \ar[rr]^-{p^*(\psi_b)} & & p^*(\mathcal{G})|_{T_b \times \widehat{T}_b} \ar[rr]^-{\gamma|_{T_b \times \widehat{T}_b}} & & \widehat{p}^*(\widehat{\mathcal{G}})|_{T_b \times \widehat{T}_b} \ar[rr]^-{\widehat{p}^*(\widehat{\psi}_b^{-1})} & & 1
}
\]
Any such automorphism corresponds to a graded line bundle, hence to a class 
\[
\delta_b(\gamma) \in GrPic( T_b \times \widehat{T}_b) \cong H^0(T_b \times \widehat{T}_b ; \mathbb{Z}_2) \oplus H^2( T_b \times \widehat{T}_b ; \mathbb{Z}).
\]
Changing the trivialisations $\psi_b$ changes $\delta_b$ by an element of the pullback $p^* : GrPic(T_b) \to GrPic(T_b \times \widehat{T}_b)$ and similarly for $\widehat{\psi_b}$. Thus we have an invariantly defined class
\[
[\delta_b(\gamma)] \in \frac{GrPic( T_b \times \widehat{T}_b) }{p^*(GrPic(T_b)) + \widehat{p}^*(GrPic(\widehat{T}_b))} \cong H^1(T_b ; \mathbb{Z}) \otimes H^1(\widehat{T}_b ; \mathbb{Z}) \cong (\Lambda_\rho)_b^* \otimes (\Lambda_{\widehat{\rho}})_b^*,
\]
where $\rho, \widehat{\rho}$ are the monodromy representations of $X$ and $\widehat{X}$. This class varies continuously with $b$, so defines a global section
\[
\delta(\gamma) \in H^0(B ; \Lambda_\rho^* \otimes \Lambda_{\widehat{\rho}}^* ) \cong H^0(B ; Hom( \Lambda_{\widehat{\rho}} , \Lambda_{\rho}^* )).
\]

The Poincar\'e property is that $\delta$ is an isomorphism $\Lambda_{\widehat{\rho}} \cong \Lambda_{\rho}^*$. Note that up to this point, we have only considered $\rho,\widehat{\rho}$ as representations of the ordinary fundamental group of $B$ and not the orbifold fundamental group. However, using the Real structures on $\mathcal{G}$ and $\widehat{\mathcal{G}}$ and the fact that $\gamma$ is an isomorphism of Real gerbes, it follows easily that $\delta_{\sigma(b)}(\gamma) = -\sigma^*(\delta_b(\gamma))$. Thus $\delta(\gamma)$ can be regarded as an equivariant isomorphism
\[
\Lambda_{\widehat{\rho}} \cong \Lambda_{\rho}^* \otimes_{\mathbb{Z}} \mathbb{Z}_- = \Lambda^*_{\rho^{\vee}}.
\]
That is, the monodromy representations $\rho, \widehat{\rho}$ are related by $\widehat{\rho} = \rho^{\vee}$.

Let us define a Real $T$-dual $5$-tuple $(X , \mathcal{G} , \widehat{X} , \widehat{\mathcal{G}} , \gamma)$ to be a Real $T$-dual pair $(X , \mathcal{G}) , (\widehat{X} , \widehat{\mathcal{G}})$ together with an isomorphism $\gamma : p^*(\mathcal{G}) \to \widehat{p}^*(\widehat{\mathcal{G}})$ satisfying the Poincar\'e property.

\begin{proposition}\label{prop:tdualc}
Let $(X , \mathcal{G} , \widehat{X} , \widehat{\mathcal{G}} , \gamma)$ be a Real $T$-dual $5$-tuple. Let $\delta = \delta(\gamma) : \Lambda_{\widehat{\rho}} \to (\Lambda^*_\rho)_-$ be the isomorphism induced by $\gamma$. Let $c \in H^2_{\mathbb{Z}_2}( B ; \Lambda_\rho)$, $\widehat{c} \in H^2_{\mathbb{Z}_2}(B ; \Lambda_{\widehat{\rho}})$ denote the Chern classes of $X$ and $\widehat{X}$. Then the ungraded Dixmier--Douady classes of $\mathcal{G}$ and $\widehat{\mathcal{G}}$ in $H^3_{\mathbb{Z}_2}(X ; \mathbb{Z}_-)$ and $H^3_{\mathbb{Z}_2}(\widehat{X} ; \mathbb{Z}_-)$ admit lifts to $H^2_{\mathbb{Z}_2}( B ; \mathcal{A}(X)_-)$ and $H^2_{\mathbb{Z}_2}(B ; \mathcal{A}(\widehat{X})_-)$ such that $\lambda(\mathcal{G}) = \delta \widehat{c}$, $\lambda(\widehat{\mathcal{G}}) = \delta^{t} c$ (where $\delta^t : \Lambda_\rho \to (\Lambda_{\widehat{\rho}})^*_-$ denotes the transpose of $\delta$).
\end{proposition}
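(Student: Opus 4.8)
The plan is to transport the problem to the fibre product $C = X \times_B \widehat{X}$, write $\pi_C : C \to B$ for the projection, and exploit the Poincar\'e property of $\gamma$ there; the two asserted identities will come out of one computation, since they are interchanged by swapping $X$ and $\widehat{X}$. First I would form the auxiliary gerbe $\mathcal{Q} = \widehat{p}^*(\widehat{\mathcal{G}}) \otimes (p^*\mathcal{G})^*$ on $C$. Since $p,\widehat{p}$ are affine maps of affine torus bundles over $B$ and $\mathcal{G},\widehat{\mathcal{G}}$ are affine, $\mathcal{Q}$ is an affine graded Real gerbe on $C$, and $\gamma$ furnishes an isomorphism $1 \to \mathcal{Q}$, so the ungraded Dixmier--Douady class of $\mathcal{Q}$ vanishes in $H^3_{\mathbb{Z}_2}(C;\mathbb{Z}_-)$. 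As $\mathcal{G},\widehat{\mathcal{G}}$ are affine they admit lifts $[\mathcal{G}] \in H^2_{\mathbb{Z}_2}(B;\mathcal{A}(X)_-)$, $[\widehat{\mathcal{G}}] \in H^2_{\mathbb{Z}_2}(B;\mathcal{A}(\widehat{X})_-)$ of their ungraded Dixmier--Douady classes, and $[\mathcal{Q}] := \widehat{p}^*[\widehat{\mathcal{G}}] - p^*[\mathcal{G}] \in H^2_{\mathbb{Z}_2}(B;\mathcal{A}(C)_-)$ (using the evident maps $\mathcal{A}(X)_-,\mathcal{A}(\widehat{X})_- \to \mathcal{A}(C)_-$) is a lift of the class of $\mathcal{Q}$. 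Because $p^*\mathcal{G}$ is constant along the $\widehat{T}$-directions of $C$ and $\widehat{p}^*\widehat{\mathcal{G}}$ along the $T$-directions, applying $\lambda : \mathcal{A}(C)_- \to (\Lambda_\rho)^*_- \oplus (\Lambda_{\widehat\rho})^*_-$ gives $\lambda([\mathcal{Q}]) = (-\lambda([\mathcal{G}]),\lambda([\widehat{\mathcal{G}}]))$, so it suffices to compute $\lambda([\mathcal{Q}])$ and to check that, for a suitable choice of the lifts, it equals $(-\delta\widehat{c},\delta^t c)$.

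The next step uses that $[\mathcal{Q}]$ dies in $H^3_{\mathbb{Z}_2}(C;\mathbb{Z}_-)$. By Proposition \ref{prop:affgrbiso} one may replace $\mathcal{A}(C)_-$ by $(\pi_C)_*(\mathcal{C}_C(S^1))_-$; then the Leray spectral sequence of $\pi_C$ together with the exponential sequence $0 \to \mathbb{Z}_- \to \mathcal{C}_C(\mathbb{R})_- \to \mathcal{C}_C(S^1)_- \to 0$ (whose middle term has vanishing higher direct images and is acyclic on $B$) identifies the kernel of the edge map $H^2_{\mathbb{Z}_2}(B;(\pi_C)_*\mathcal{C}_C(S^1)_-) \to H^2_{\mathbb{Z}_2}(C;\mathcal{C}_C(S^1)_-) \cong H^3_{\mathbb{Z}_2}(C;\mathbb{Z}_-)$ with the image of a transgression $\overline{d}_2$ out of $H^0_{\mathbb{Z}_2}(B;R^1(\pi_C)_*\mathcal{C}_C(S^1)_-) \cong H^0_{\mathbb{Z}_2}(B;R^2(\pi_C)_*\mathbb{Z}_-)$, and moreover shows that $\lambda \circ \overline{d}_2$ agrees up to sign with the ordinary Leray transgression $d_2 : H^0_{\mathbb{Z}_2}(B;R^2(\pi_C)_*\mathbb{Z}_-) \to H^2_{\mathbb{Z}_2}(B;R^1(\pi_C)_*\mathbb{Z}_-)$ for constant coefficients. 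Writing $[\mathcal{Q}] = \overline{d}_2(\xi)$, the class $d_2(\xi)$ is independent of the choice of $\xi$ and $\lambda([\mathcal{Q}]) = \pm d_2(\xi)$. The crucial point — and the step I expect to be the real obstacle — is that, after possibly correcting the lifts of the first step by classes in $\ker(\pi^*)$ (which does not alter the Dixmier--Douady classes of $\mathcal{G},\widehat{\mathcal{G}}$), one may take $\xi = \delta(\gamma)$, viewed as a section of $R^2(\pi_C)_*\mathbb{Z}_-$ through the off-diagonal summand $(\Lambda^*_\rho \otimes \Lambda^*_{\widehat\rho})_-$ of $\wedge^2\bigl((\Lambda_\rho)^* \oplus (\Lambda_{\widehat\rho})^*\bigr)_-$. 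Morally this holds because the obstruction $[\mathcal{Q}]$ to trivialising $\mathcal{Q}$ \emph{affinely} is the obstruction to deforming $\gamma$ to an affine trivialisation, and on each fibre $T_b \times \widehat{T}_b$ this discrepancy is the class $\delta_b(\gamma) \in GrPic(T_b \times \widehat{T}_b)$ whose off-diagonal part is $\delta$ — exactly the fibrewise datum detected by $\overline{d}_2$. To make this rigorous I would pick a good equivariant open cover $\{U_i\}$ of $B$ with contractible multiple intersections, fix equivariant affine trivialisations of $X,\widehat{X}$ over the $U_i$ (recording the affine transition cocycles and \v{C}ech representatives $c_{ijk},\widehat{c}_{ijk}$ of $c,\widehat{c}$), represent $\mathcal{G},\widehat{\mathcal{G}}$ by affine cocycles with linear parts $m_{ijk} \in \Lambda^*$, $\widehat{m}_{ijk} \in \widehat{\Lambda}^*$, write $\mathcal{Q} \cong \delta N$ for a graded line bundle $N$ on the pulled-back cover of $C$ coming from $\gamma$, observe that the $\Lambda^* \otimes \widehat{\Lambda}^*$-component of the fibrewise $c_1(N)$ is $\delta$, and then expand $\delta N$ and compare with the affine cocycle of $\mathcal{Q}$; this is the Real analogue of the computation underlying topological $T$-duality in \cite{bar1,bar2}, the only new ingredient being the bookkeeping of Real structures and gradings.

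Finally I would evaluate $d_2(\delta(\gamma))$ by the standard transgression formula for a torus bundle: the restriction of $d_2$ to $R^1(\pi_C)_*\mathbb{Z} = \Lambda^* \oplus \widehat{\Lambda}^*$ is the pairing with the Chern class $(c,\widehat{c}) \in H^2_{\mathbb{Z}_2}(B;\Lambda_\rho \oplus \Lambda_{\widehat\rho})$, and extending by the Leibniz rule over $\wedge^2$ and contracting the pairings — writing $\delta = \sum_\alpha \delta(f_\alpha) \otimes f^*_\alpha$ for a basis $\{f_\alpha\}$ of $\Lambda_{\widehat\rho}$ with dual basis $\{f^*_\alpha\}$ of $\Lambda^*_{\widehat\rho}$ — gives $d_2(\delta(\gamma)) = (\mp\,\delta\widehat{c},\ \pm\,\delta^t c)$ in $H^2_{\mathbb{Z}_2}(B;(\Lambda_\rho)^*_-) \oplus H^2_{\mathbb{Z}_2}(B;(\Lambda_{\widehat\rho})^*_-)$. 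Combining this with $\lambda([\mathcal{Q}]) = (-\lambda([\mathcal{G}]),\lambda([\widehat{\mathcal{G}}]))$ from the first step and carrying out a careful sign check yields $\lambda(\mathcal{G}) = \delta\widehat{c}$ and $\lambda(\widehat{\mathcal{G}}) = \delta^t c$, as required.
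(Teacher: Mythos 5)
Your proposal follows essentially the same strategy as the paper's proof: work in the Leray--Serre spectral sequence for $q:C\to B$ with $\mathcal{C}_C(S^1)_-$-coefficients, identify the obstruction class in $E_2^{0,1}$ with $\delta(\gamma)$ after absorbing the purely diagonal $\wedge^2\Lambda^*_\rho$- and $\wedge^2\Lambda^*_{\widehat\rho}$-components into the choice of affine lifts, compare with the $\mathbb{Z}_-$-coefficient spectral sequence via the exponential sequence, and evaluate the transgression $d_2$ as contraction with the Chern class of $C\to B$. The only presentational difference is that the paper cites \cite[Prop.\ 3.8]{bar2} for the identity $p^*(k)+d_2(l)=\widehat p^*(\widehat k)$, whereas you re-derive its content through the trivialisable gerbe $\mathcal{Q}$; both defer the Real-structure bookkeeping to the corresponding non-equivariant computations in \cite{bar1,bar2}, and the remaining issues you flag (tracking signs, that the diagonal pieces can be absorbed into the lifts) are exactly the ones the paper handles explicitly.
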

\begin{proof}
Let $C = X \times_B \widehat{X}$ and $q : C \to B$ the projection to $B$. We will make use of the Leray--Serre spectral sequence for the map $q$ and sheaf $\mathcal{C}_C(S^1)_-$. This is a spectral sequence $\{ E_r^{p,q} , d_r \}_{r \ge 2}$ abutting to $H^*_{\mathbb{Z}_2}( C ; \mathcal{C}_C(S^1)_-)$ and with
\[
E_2^{p,q} = H^p_{\mathbb{Z}_2}( B ; R^q q_* \mathcal{C}_C(S^1)_- ).
\]
In particular, from the spectral sequence we get an exact sequence
\[
E_2^{0,1} \buildrel d_2 \over \longrightarrow E_2^{2,0} \to H^2_{\mathbb{Z}_2}( C ; \mathcal{C}(S^1)_-).
\]
Observe that $E_2^{2,0} = H^2_{\mathbb{Z}_2}( B ; q_* \mathcal{C}_C(S^1)_-)$, which by Proposition \ref{prop:affgrbiso} is isomorphic to $H^2_{\mathbb{Z}_2}(B ; \mathcal{A}(C)_-)$. Since $\mathcal{G}$ and $\widehat{\mathcal{G}}$ are affine, there are lifts $k \in H^2_{\mathbb{Z}_2}(B ; \mathcal{A}(X)_-)$ and $\widehat{k} \in H^2_{\mathbb{Z}_2}(B ; \mathcal{A}(\widehat{X})_-)$ of the ungraded Dixmier--Douady classes of $\mathcal{G}$ and $\widehat{\mathcal{G}}$. Then $p^*(k), \widehat{p}^*(\widehat{k}) \in E_2^{2,0}$ are lifts of $p^*(\mathcal{G})$ and $\widehat{p}^*(\widetilde{\mathcal{G}})$.

Choose an open cover $\{ U_i \}_{i \in I}$ of $B$ where $\sigma$ acts freely on $I$ and $\sigma(U_i) = U_{\sigma(i)}$. Refining the cover if necessary, we have that $\mathcal{G}$, $\widehat{\mathcal{G}}$ can be represented by \v{C}ech $2$-cocycles $\theta_{ijk} \in \mathcal{A}(X)_-(U_{ijk})$ and $\widehat{\theta}_{ijk} \in \mathcal{A}(\widehat{X})_-(U_{ijk})$. Choosing such representations determines trivialisations $\psi_i : 1 \to \mathcal{G}|_{\pi^{-1}(U_i)}$, $\widehat{\psi}_i : 1 \to \widehat{\mathcal{G}}|_{\widehat{\pi}^{-1}(U_i)}$ (as gerbes without Real structures). Indeed, we have $(\theta|_{U_i})_{abc} = (\delta \mu_i)_{abc}$ where $(\mu_i)_{ab} = \theta_{iab}$ and similarly $(\widehat{\theta}|_{U_i})_{abc} = (\delta \widehat{\mu}_i)_{abc}$ where $(\widehat{\mu}_i)_{ab} = \widehat{\theta}_{iab}$. A trivialisation $\mu_i$ of the cocycle $\theta_{ijk}$ defines a trivialisation of the corresponding gerbe $\mathcal{G}$ in an obvious way. By construction, these trivialisations satisfy $\sigma^*(\psi_i) = \psi_{\sigma(i)}^{-1}$, $\sigma^*(\widehat{\psi}_i) = \widehat{\psi}_{\sigma(i)}^{-1}$. On the overlap $\pi^{-1}(U_{ij}) = \pi^{-1}(U_i) \cap \pi^{-1}(U_j)$ we have two trivialisations $\psi_i|_{U_{ij}}$ and $\psi_j|_{U_{ij}}$. They differ by an automorphism of the trivial gerbe on $\pi^{-1}(U_{ij})$, hence by a line bundle $A_{ij}$ on $\pi^{-1}(U_{ij})$. The transition functions for this line bundle are given by the cocycle $g_{ab} = (\mu_j)_{ab} (\mu_i)_{ab}^{-1} = \theta_{jab}\theta_{iab}^{-1}$. Since $\theta$ is a $2$-cocycle, we have $\theta_{jab}\theta_{iab}^{-1} = \theta_{ijb}^{-1} \theta_{ija}$, so $g_{ab} = h_b h_a^{-1}$, where $h_a = \theta_{ija}^{-1}$. So the line bundle $A_{ij}$ is trivial. Similar reasoning applies to the trivialisations $\widehat{\psi}_i$.

With respect to the trivialisations $\psi_i,\widehat{\psi}_i$, the restriction $\gamma|_{q^{-1}(U_i)}$ can be represented by an automorphism of the trivial gerbe, hence by a line bundle $L_i \to q^{-1}(U_i)$. Since over $U_i \cap U_j$, the trivialisations $\psi_i, \psi_j$ differ by a trivial line bundle and similarly $\widehat{\psi}_i, \widehat{\psi}_j$ differ by a trivial line bundle, it follows that $L_i|_{U_{ij}} \cong L_j|_{U_{ij}}$. Also since $\sigma^*(\psi_i) = \psi_{\sigma(i)}^{-1}$ and $\sigma^*(\widehat{\psi}_i) = \widehat{\psi}_{\sigma(i)}^{-1}$, it follows that $\sigma^*(L_i) \cong L_{\sigma(i)}^{-1}$.  It follows that the collection of isomorphism classes $\{ [L_i] \}_{i \in I}$ forms an element 
\[
l = [L_i] \in H^0_{\mathbb{Z}_2}( B ; R^2 q_* \mathbb{Z}_-) \cong H^0_{\mathbb{Z}_2}(B ; R^1 q_* \mathcal{C}_C(S^1)) = E_2^{0,1}.
\]
Now by a straightforward adaptation of \cite[Proposition 3.8]{bar2} to the Real setting, we have that 
\begin{equation}\label{equ:kkhat}
p^*(k) + d_2(l) = \widehat{p}^*(\widehat{k}).
\end{equation}

Observe that $R^1 q_* \mathbb{Z} \cong \Lambda_\rho^* \oplus \Lambda_{\widehat{\rho}}^*$, thus
\begin{align*}
E_2^{0,1} &= H^0_{\mathbb{Z}_2}(B ; (\wedge^2 (\Lambda_\rho \oplus \Lambda_{\widehat{\rho}} )^*)_-) \\
&\cong H^0_{\mathbb{Z}_2}(B ; (\wedge^2 \Lambda_\rho^*)_-) \oplus H^0_{\mathbb{Z}_2}(B ; \Lambda_{\widehat{\rho}}^* \otimes (\Lambda_\rho)^*_- ) \oplus H^0_{\mathbb{Z}_2}(B ; (\wedge^2 \Lambda_{\widehat{\rho}}^*)_- ).
\end{align*}
In this isomorphism, $\Lambda_{\widehat{\rho}}^* \otimes (\Lambda_\rho)^*_-$ is regarded as a subsheaf of $(\wedge^2 (\Lambda_\rho \oplus \Lambda_{\widehat{\rho}})^*)_-$ by
\[
\lambda_1 \otimes \lambda_2 \mapsto \lambda_1 \wedge \lambda_2.
\]

Under this decomposition we can write $l = ( p^*(\alpha) , \beta , \widehat{p}^*(\widehat{\alpha}))$. In fact, from the definition of $l$ it is clear that the component $\beta \in H^0_{\mathbb{Z}_2}(B ; \Lambda_{\widehat{\rho}}^* \otimes (\Lambda_\rho)^*_- )$ is given by $\beta = \delta(\gamma)$. Then Equation (\ref{equ:kkhat}) can be re-written as
\begin{equation}\label{equ:kkhat2}
p^*(k) + d_2( p^*(\alpha)) = d_2(\delta) = \widehat{p}^*( \widehat{k})  - d_2( \widehat{p}^*(\widehat{\alpha})).
\end{equation}
Note that $d_2( p^*(\alpha) ) = p^*( d_2(\alpha))$, where the differential $d_2$ on the right hand side is the differential in the Leray--Serre spectral sequence for $\pi : X \to B$ with values in the sheaf $\mathcal{C}_X(S^1)_-$. Similarly $d_2(\widehat{p}^*(\widehat{\alpha})) = \widehat{p}^*( d_2(\widehat{\alpha}))$. Set $k' = k + d_2(\alpha)$ and $\widehat{k}' = \widehat{k} - d_2(\widehat{\alpha})$. Then $k',\widehat{k}$ are lifts of the ungraded Dixmier--Douady classes of $\mathcal{G}$ and $\widehat{\mathcal{G}}$ to $H^2_{\mathbb{Z}_2}(B ; \mathcal{A}(X)_-)$ and $H^2_{\mathbb{Z}_2}( B ; \mathcal{A}(\widehat{X})_-)$ and Equation (\ref{equ:kkhat2}) gives
\[
p^*(k') + d_2(\delta) = \widehat{p}^*(\widehat{k}').
\]
Applying $\lambda : H^2_{\mathbb{Z}_2}( B ; \mathcal{A}(C)_-) \to H^2_{\mathbb{Z}_2}( B ; \Lambda_\rho^* \oplus \Lambda_{\widehat{\rho}}^*)$ to this equality, we have
\[
p^*(\lambda(\mathcal{G})) + \lambda( d_2(\delta) ) = \widehat{p}^*(\lambda(\widehat{\mathcal{G}})).
\]
If we can show that $\lambda( d_2( \delta ) ) = (- \delta \widehat{c} , \delta^t c)$, then we must have $\lambda(\mathcal{G}) = \delta \widehat{c}$, $\lambda( \widehat{\mathcal{G}}) = \delta^t c$ and the proof will be complete.

To evaluate $\lambda( d_2(\delta) )$ we will make use of The Leray--Serre spectral sequence for the map $q : C \to B$ and the constant sheaf $\mathbb{Z}$. Denote this spectral sequence by ${E'}_r^{p,q}$. We have ${E_2'}^{p,q} = H^p_{\mathbb{Z}_2}(B ; R^q q_* \mathbb{Z}_- )$. From \cite{bar3} (see also \cite[Theorem 3.11]{bar2}) the short exact sequence 
\begin{equation}\label{equ:ses}
0 \to \mathbb{Z}_- \to \mathcal{C}_C(\mathbb{R})_- \to \mathcal{C}_C(S^1)_- \to 0
\end{equation}
induces a morphism $D_r : E_r^{p,q} \to {E'_r}^{p,q+1}$ of Leray--Serre spectral sequences associated to $\mathcal{C}_C(S^1)_-$ and $\mathbb{Z}_-$. The short exact sequence (\ref{equ:ses}) induces a long exact sequence of equivariant sheaves on $B$:
\[
\cdots \to R^j q_* \mathbb{Z}_- \to R^j q_* \mathcal{C}_C(\mathbb{R})_- \to R^j q_* \mathcal{C}(S^1)_- \buildrel \delta \over \longrightarrow R^{j+1} q_* \mathbb{Z}_- \to \cdots
\]
On the $E_2$ pages $D_r : E_2^{p,q} \to {E_2'}^{p,q}$ is given by the maps
\[
D_r : H^p_{\mathbb{Z}_2}( B ; R^q q_* \mathcal{C}_C(S^1)_-) \to H^p_{\mathbb{Z}_2}(B ; R^{q+1} q_* \mathbb{Z}_- )
\]
in cohomology induced by the coboundary maps $\delta : R^q q_* \mathcal{C}_C(S^1)_- \to R^{q+1} q_* \mathbb{Z}_-$ in the above long exact sequence. In particular, for the case $r=2$, $(p,q) = (2,0)$, the morphism $D_2 : H^2_{\mathbb{Z}_2}(B ; q_* \mathcal{C}_C(S^1)_-) \to H^2_{\mathbb{Z}_2}(B ; R^1 q_* \mathbb{Z}_-) \cong H^2_{\mathbb{Z}}(B ; \Lambda_\rho \oplus \Lambda_{\widehat{\rho}})$ is precisely the map $\lambda$. Since $D$ is a morphism of spectral sequences, it commutes with the differentials of the spectral sequence and thus
\[
\lambda( d_2(\delta) ) = D( d_2(\delta) ) = d_2( D(\delta) ).
\]
Here $\delta = \delta(\gamma)$ is the isomorphism $\delta : \Lambda_{\widehat{\rho}} \to (\Lambda_\rho)^*_-$ induced by $\gamma$. Then $D(\delta) \in {E_2'}^{0,2}$ is just $\delta$ regarded as an element of ${E_2'}^{0,2} = H^0_{\mathbb{Z}_2}( B ; ( \wedge^2 (\Lambda_{\rho} \oplus \Lambda_{\widehat{\rho}})^*)_- )$. By a straightforward extension of \cite[Proposition 3.3]{bar1} to the Real setting, the differential $d_2 : {E_2'}^{p,q} \to {E_2'}^{p+2,q-1}$ is given by the cup product with the Chern class of $C \to B$, which is $(c , \widehat{c} \, )$, followed by the contraction
\[
\iota : ( \Lambda_\rho \oplus \Lambda_{\widehat{\rho}}) \otimes (\wedge^q (\Lambda_\rho \oplus \Lambda_{\widehat{\rho}})^*)_- \to (\wedge^{q-1} (\Lambda_\rho \oplus \Lambda_{\widehat{\rho}})^*)_-
\]
In particular, it follows that $d_2( D(\delta) ) = \iota_{(c , \widehat{c} \, )} D(\delta) = ( -\delta \widehat{c} , \delta^t c)$, and the proof is complete.
\end{proof}

\begin{theorem}\label{thm:rtd}
Let $\Lambda_\rho, \Lambda_{\widehat{\rho}}$ be $\mathbb{Z}_2$-equivariant local systems on $B$ each with coefficient group a finite rank free abelian group and let $\delta : \Lambda_{\widehat{\rho}} \to (\Lambda_\rho^*)_-$ be an isomorphism. Let $X \to B$ be a Real affine torus bundle on $B$ with monodromy $\rho$ and let $\mathcal{G}$ be a Real affine gerbe on $X$. Then there exists a Real $T$-dual $5$-tuple $(X , \mathcal{G} , \widehat{X} , \widehat{\mathcal{G}} , \gamma)$ such that $\delta(\gamma) = \delta$.
\end{theorem}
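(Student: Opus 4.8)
The plan is to construct $\widehat X$, $\widehat{\mathcal G}$ and $\gamma$ in turn, using Proposition \ref{prop:tdualc} as a blueprint for the Dixmier--Douady data a Real $T$-dual must carry, and the spectral sequence argument inside its proof to produce $\gamma$.

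\emph{Fixing $\widehat X$.} We take $\widehat X$ to have monodromy $\widehat\rho$ (this is what the Poincar\'e property requires), using $\delta$ to identify $\Lambda_{\widehat\rho}$ with $(\Lambda_\rho^*)_-$. Choose an affine-gerbe representative of $\mathcal G$ with respect to some pullback cover $\pi^{-1}(\mathcal U)$; this produces a lift $k \in H^2_{\mathbb Z_2}(B ; \mathcal A(X)_-)$ of the ungraded Dixmier--Douady class of $\mathcal G$ (using that $\mathcal G$ is affine), and exhibits the grading class of $\mathcal G$ as $\pi^*(\epsilon_B)$ for some $\epsilon_B \in H^1_{\mathbb Z_2}(B ; \mathbb Z_2)$. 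Define $\widehat c := \delta^{-1}(\lambda(k)) \in H^2_{\mathbb Z_2}(B ; \Lambda_{\widehat\rho})$ and let $\widehat\pi : \widehat X \to B$ be the Real affine torus bundle with monodromy $\widehat\rho$ and Chern class $\widehat c$; it exists and is unique by the classification of Real affine torus bundles recalled in \textsection \ref{sec:ratb}.

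\emph{Building $\widehat{\mathcal G}$.} Let $c \in H^2_{\mathbb Z_2}(B ; \Lambda_\rho)$ be the Chern class of $X$. I claim $\delta^t c$ lies in the image of $\lambda : H^2_{\mathbb Z_2}(B ; \mathcal A(\widehat X)_-) \to H^2_{\mathbb Z_2}(B ; (\Lambda_{\widehat\rho}^*)_-)$. By Proposition \ref{prop:cup} the image of this $\lambda$ is the kernel of the coboundary $\alpha \mapsto \langle \widehat c , \alpha \rangle$; substituting $\widehat c = \delta^{-1}(\lambda(k))$ and using that $\delta^t$ is the transpose of $\delta$ for the natural pairings (so $\langle \delta^{-1} x , \delta^t y \rangle = \langle y , x \rangle$) one gets $\langle \widehat c , \delta^t c \rangle = \langle c , \lambda(k) \rangle$, and this vanishes since $\langle c , - \rangle$ is the coboundary of the analogous exact sequence for $X$ (Proposition \ref{prop:cup} once more) and $\lambda(k)$ is by construction in the image of $\lambda$. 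Hence $\delta^t c$ lifts to some $\widehat k \in H^2_{\mathbb Z_2}(B ; \mathcal A(\widehat X)_-)$, and the pair $(\widehat\pi^*\epsilon_B , \widehat k)$ determines, via $\widehat\pi^*$, an affine graded Real gerbe $\widehat{\mathcal G}$ on $\widehat X$ with $\lambda(\widehat{\mathcal G}) = \delta^t c$ (Proposition \ref{prop:affgrbiso}).

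\emph{Building $\gamma$, and the main obstacle.} Work on $C = X \times_B \widehat X$ with $q : C \to B$, and consider the Leray--Serre spectral sequence of $q$ with coefficients in $\mathcal C_C(S^1)_-$, together with the morphism $D$ of spectral sequences induced by $0 \to \mathbb Z_- \to \mathcal C_C(\mathbb R)_- \to \mathcal C_C(S^1)_- \to 0$, exactly as in the proof of Proposition \ref{prop:tdualc}. Regard $\delta$ as a class in $E_2^{0,1} = H^0_{\mathbb Z_2}(B ; R^1 q_* \mathcal C_C(S^1))$ lying in the summand $H^0_{\mathbb Z_2}(B ; \Lambda_{\widehat\rho}^* \otimes (\Lambda_\rho^*)_-)$. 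The computation of that proof gives $\lambda(d_2(\delta)) = (-\delta\widehat c , \delta^t c) = (-\lambda(k) , \lambda(\widehat k)) = \lambda(\widehat p^*(\widehat k) - p^*(k))$, so $\widehat p^*(\widehat k) - p^*(k) - d_2(\delta) \in \ker\lambda$, hence equals the image of some $\eta \in H^3_{\mathbb Z_2}(B ; \mathbb Z_-)$. Replacing $\widehat k$ by $\widehat k$ minus the image of $\eta$ (which only alters $\widehat{\mathcal G}$ by $\widehat\pi^*(\eta)$, keeping it affine with the same grading class and the same $\lambda(\widehat{\mathcal G})$) we may arrange $p^*(k) + d_2(\delta) = \widehat p^*(\widehat k)$ on the nose in $E_2^{2,0}$. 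By the Real analogue of \cite[Proposition 3.8]{bar2}, such an identity is realized by an isomorphism $\gamma : p^*(\mathcal G) \to \widehat p^*(\widehat{\mathcal G})$ whose associated class $l \in E_2^{0,1}$ is $\delta$; since the middle component of $l$ is exactly $\delta(\gamma)$, we conclude $\delta(\gamma) = \delta$, so $(X , \mathcal G , \widehat X , \widehat{\mathcal G} , \gamma)$ is the required Real $T$-dual $5$-tuple. The hard part is this last step: establishing the equivariant version of \cite[Proposition 3.8]{bar2} and carrying the Real-structure bookkeeping through the spectral sequence, i.e. showing that a solution of the cocycle identity genuinely arises from an isomorphism of graded Real gerbes with the prescribed invariant $\delta(\gamma)$. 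A concrete alternative is to carry out the whole construction in explicit affine \v{C}ech data over an equivariant cover of $B$ and take $\gamma$ to be the fibrewise Poincar\'e-bundle isomorphism attached to $\delta$, which makes $\delta(\gamma) = \delta$ manifest at the cost of a longer verification that all cocycle conditions (and their Real structures) close up.
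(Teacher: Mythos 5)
Your proposal reproduces the paper's own proof essentially verbatim: define $\widehat{c} = \delta^{-1}\lambda(k)$, use Proposition \ref{prop:cup} twice to show $\delta^t c$ lifts to $\widehat{k} \in H^2_{\mathbb{Z}_2}(B;\mathcal{A}(\widehat X)_-)$, correct $\widehat{k}$ by an element coming from $H^3_{\mathbb{Z}_2}(B;\mathbb{Z}_-)$ so that $p^*(k)+d_2(\delta)=\widehat{p}^*(\widehat{k})$ holds exactly, and then invoke the Real analogue of the relevant proposition from \cite{bar2} (the paper cites Proposition~3.9 there, the converse direction to Proposition~3.8 used in Proposition \ref{prop:tdualc}) to produce $\gamma$ with $\delta(\gamma)=\delta$. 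The "hard part" you flag is also what the paper leaves as a "straightforward extension" to the Real setting, so no gap relative to the paper's own level of detail.
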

\begin{proof}
Choose a lift of the ungraded Dixmier--Douady class of $\mathcal{G}$ to a class $k \in H^2_{\mathbb{Z}_2}( X ; \mathcal{A}(X)_-)$. Define $\widehat{c} \in H^2_{\mathbb{Z}_2}(B ; \Lambda_{\widehat{\rho}})$ to be $\widehat{c} = \delta^{-1} \lambda(k)$. The pair $(\widehat{\rho} , \widehat{c})$ determines a Real affine torus bundle $\widehat{\pi} : \widehat{X} \to B$. By Proposition \ref{prop:cup}, we have $\langle c , \widehat{c} \, \rangle = 0 \in H^4_{\mathbb{Z}_2}(B ; \mathbb{Z}_-)$. Then by the long exact sequence associated to $0 \to \mathcal{C}_B(S^1)_- \to \mathcal{A}(\widehat{X})_- \to (\Lambda_{\widehat{\rho}})^*_- \to 0$ and a second application of Proposition \ref{prop:cup}, $\langle \, \widehat{c} , c \rangle = \langle c , \widehat{c} \, \rangle = 0$ implies that there is a class $\widehat{k}_0 \in H^2_{\mathbb{Z}_2}( B ; \mathcal{A}(\widehat{X})_-)$ such that $\lambda(\widehat{k}_0) = c$. As in the proof of Proposition \ref{prop:tdualc} we let $\{ E_r^{p,q} , d_r \}$ denote the Leray--Serre spectral sequence for $q : C \to B$ with coefficients in $\mathcal{C}_C(S^1)_-$. Notice that $\delta : \Lambda_{\widehat{\rho}} \to (\Lambda_\rho^*)_-$ defines a class
\[
\delta \in H^0_{\mathbb{Z}_2}(B ; \Lambda_{\widehat{\rho}}^* \otimes (\Lambda_\rho)^*_- )  \subseteq H^0_{\mathbb{Z}_2}(B ; (\wedge^2 (\Lambda_\rho \oplus \Lambda_{\widehat{\rho}} )^*)_-) = E_2^{0,1}.
\]
Let us define $\omega \in E_2^{2,0}$ by
\[
\omega = p^*(k) + d_2(\delta) - \widehat{p}^*(\widehat{k}_0).
\]
Then by the same calculation used in the proof of Proposition \ref{prop:tdualc}, we have $\lambda(\omega) = 0$. Hence by the long exact sequence for $0 \to \mathcal{C}_B(S^1)_- \to \mathcal{A}(C)_- \to (\Lambda_\rho \oplus \Lambda_{\widehat{\rho}})^*_- \to 0$, we have that $\omega = q^*(\omega_0)$ for some $\omega_0 \in H^2_{\mathbb{Z}_2}( B ; \mathcal{C}_B(S^1)_-)$. Setting $\widehat{k} = \widehat{k}_0 + q^*(\omega_0)$, we have
\begin{equation}\label{equ:kkhat3}
p^*(k) + d_2(\delta) = \widehat{p}^*(\widehat{k}).
\end{equation}
Since $\mathcal{G}$ is a Real affine graded gerbe its grading class is of the form $\pi^*(e)$ for some $e \in H^1_{\mathbb{Z}_2}( B ; \mathbb{Z}_2)$ (in fact, the Leray--Serre spectral sequence for $\pi : X \to B$ with coefficients in $\mathbb{Z}_2$ implies that $\pi^* : H^1_{\mathbb{Z}_2}(B ; \mathbb{Z}_2) \to H^1_{\mathbb{Z}_2}(X ; \mathbb{Z}_2)$ is injective, so $e$ is uniquely determined). Let $\widehat{\mathcal{G}}$ be the graded Real affine gerbe on $\widehat{X}$ whose grading class is $\widehat{\pi}^*(e)$ and whose ungraded Dixmier--Douady class is the image of $\widehat{k}$ under the natural map $H^2_{\mathbb{Z}_2}(B ; \mathcal{A}(\widehat{X})_-) \to H^3_{\mathbb{Z}_2}( \widehat{X} ; \mathbb{Z}_-)$. Then by construction $\widehat{\mathcal{G}}$ is an affine gerbe and Equation (\ref{equ:kkhat}) implies the existence of an isomorphism $\gamma : p^*(\mathcal{G}) \to \widehat{p}^*(\widehat{\mathcal{G}})$ such that $\delta(\gamma) = \delta$ (this last condition follows by a straightforward extension of \cite[Proposition 3.9]{bar2} to the Real setting).
\end{proof}

\subsection{Real $T$-duality over a point}\label{sec:rtdpt}

In this section we examine in more detail the case of Real $T$-duality when the base $B$ is a point. Let $(X,\sigma_X)$ be an Real affine torus bundle over a point. Write $X = V/\Lambda$ where $\Lambda$ is a free, finite rank module over $\mathbb{Z}_2 = \langle \sigma \rangle$ and $V = \Lambda \otimes_{\mathbb{Z}} \mathbb{R}$. The action of $\sigma$ on $\Lambda$ induces a linear action on $X$ which we denote by $\sigma$. The affine action $\sigma_X : X \to X$ is then of the form $\sigma_X(x) = \sigma(x) + t$ where $t \in X$ satisfies $t + \sigma(t) = 0$. Then $t$ represents a class $[t] \in H^1_{\mathbb{Z}_2}( pt ; X )$ and the first Chern class $c \in H^2_{\mathbb{Z}_2}( pt ; \Lambda)$ is given by $c = \delta [t]$, where $\delta$ is the coboundary map for the short exact sequence $0 \to \Lambda \to V \to X \to 0$. If we wish to specify the dependence of $X$ on $\Lambda$ and $c$ we will write $X = X(\Lambda,c)$.

Let $\mathcal{G}$ be a graded Real affine gerbe on $X$. We will say $\mathcal{G}$ is a point gerbe if it is a pullback of a graded Real gerbe under the map $X \to pt$. The grading class of an graded Real affine gerbe is an element of $H^1_{\mathbb{Z}_2}(pt ; \mathbb{Z}_2) \cong \mathbb{Z}_2$, so is manifestly a pullback of a grading over a point. For the purpose of $T$-duality such gradings are uninteresting and so throughout this section we work only with gerbes with trivial grading. Any non-trivially graded $T$-dual pair of gerbes is obtained from a trivially graded $T$-dual pair by tensoring both sides by a point gerbe corresponding to the generator of $H^1_{\mathbb{Z}_2}(pt ; \mathbb{Z}_2)$.

Let $\mathcal{A}(X)$ denote the $\mathbb{Z}_2$-module given by the space of affine $S^1$-valued functions on $X$. Then (ungraded) Real affine gerbes are given by classes in $H^2_{\mathbb{Z}_2}(pt ; \mathcal{A}(X)_-)$. From the short exact sequence
\[
0 \to S^1_- \to \mathcal{A}(X)_- \to \Lambda_- \to 0
\]
we get a long exact sequence
\[
\cdots \to H^2_{\mathbb{Z}_2}(pt ; S^1_-) \to H^2_{\mathbb{Z}_2}( pt ; \mathcal{A}(X)_- ) \to H^2_{\mathbb{Z}_2}(pt ; \Lambda^*_-) \to H^3_{\mathbb{Z}_2}(pt ; S^1_-) \to \cdots
\]
We have $H^3_{\mathbb{Z}_2}(pt ; S^1_-) \cong H^4_{\mathbb{Z}_2}(pt ; \mathbb{Z}_- ) = 0$ and $H^2_{\mathbb{Z}_2}(pt ; S^1_-) \cong H^3_{\mathbb{Z}_2}(pt ; \mathbb{Z}_-) \cong \mathbb{Z}_2$. Thus we get an exact sequence
\[
\mathbb{Z}_2 \to H^2_{\mathbb{Z}_2}( pt ; \mathcal{A}(X)_- ) \buildrel \lambda \over \longrightarrow H^2_{\mathbb{Z}_2}(pt ; \Lambda^*_-) \to 0.
\]
The first map $\mathbb{Z}_2 \to H^2_{\mathbb{Z}_2}( pt ; \mathcal{A}(X)_- )$ corresponds to pulling back a point gerbe.

\begin{proposition}\label{prop:affgrb}
Let $\widehat{c} \in H^2_{\mathbb{Z}_2}(pt ; \Lambda^*_-)$. Then there exists a Real affine gerbe $\mathcal{G}$ on $X$ with $\lambda(\mathcal{G}) = \widehat{c}$ (and with trivial grading). If $c = 0$ then there are exactly two such gerbes, which differ by tensoring by a point gerbe. If $c \neq 0$ then $\mathcal{G}$ is unique.
\end{proposition}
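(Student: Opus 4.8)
The proposition has two parts---existence of a Real affine gerbe $\mathcal{G}$ on $X$ with $\lambda(\mathcal{G})=\widehat c$, and the count of such gerbes---and both will follow from the exact sequence
\[
H^1_{\mathbb{Z}_2}(pt;\Lambda^*_-)\xrightarrow{\ \partial\ }H^2_{\mathbb{Z}_2}(pt;S^1_-)\xrightarrow{\ \iota\ }H^2_{\mathbb{Z}_2}(pt;\mathcal{A}(X)_-)\xrightarrow{\ \lambda\ }H^2_{\mathbb{Z}_2}(pt;\Lambda^*_-)\to H^3_{\mathbb{Z}_2}(pt;S^1_-)
\]
attached to $0\to S^1_-\to\mathcal{A}(X)_-\to\Lambda^*_-\to 0$, once the connecting map $\partial$ is pinned down. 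Existence is immediate: since $H^3_{\mathbb{Z}_2}(pt;S^1_-)\cong H^4_{\mathbb{Z}_2}(pt;\mathbb{Z}_-)=0$, the map $\lambda$ is onto. For the count, the fibre $\lambda^{-1}(\widehat c)$ is a torsor under $\ker\lambda={\rm im}\,\iota$, and from the exact sequence ${\rm im}\,\iota\cong H^2_{\mathbb{Z}_2}(pt;S^1_-)/{\rm im}\,\partial$ is a quotient of $\mathbb{Z}_2$; moreover, as recorded just before the proposition, ${\rm im}\,\iota$ is generated by the pullback to $X$ of a point gerbe. So the whole statement reduces to the dichotomy: if $\partial=0$ then $\ker\lambda\cong\mathbb{Z}_2$ and there are exactly two gerbes over $\widehat c$, differing by a point gerbe; if $\partial$ is surjective then $\ker\lambda=0$ and $\mathcal{G}$ is unique. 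It therefore suffices to prove that $\partial\neq 0$ if and only if $c\neq 0$.

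To identify $\partial$, I would rewrite its target via the isomorphism $H^2_{\mathbb{Z}_2}(pt;S^1_-)\cong H^3_{\mathbb{Z}_2}(pt;\mathbb{Z}_-)$ coming from $0\to\mathbb{Z}_-\to\mathbb{R}_-\to S^1_-\to 0$ together with the vanishing of $H^{\,i}_{\mathbb{Z}_2}(pt;\mathbb{R}_-)$ for $i>0$. Under this identification the cocycle computation in the proof of Proposition \ref{prop:cup} applies verbatim with all cohomological degrees lowered by one, and shows that $\partial$ is the map $\alpha\mapsto\langle c,\alpha\rangle$: cup product with the Chern class $c\in H^2_{\mathbb{Z}_2}(pt;\Lambda)$ followed by the evaluation $\Lambda\otimes\Lambda^*_-\to\mathbb{Z}_-$. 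Thus I am reduced to showing that the $\mathbb{Z}_2$-bilinear pairing
\[
H^2_{\mathbb{Z}_2}(pt;\Lambda)\times H^1_{\mathbb{Z}_2}(pt;\Lambda^*_-)\longrightarrow H^3_{\mathbb{Z}_2}(pt;\mathbb{Z}_-)\cong\mathbb{Z}_2
\]
is non-degenerate in the first variable, for then $\partial=\langle c,-\rangle$ is non-zero precisely when $c\neq 0$.

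For the non-degeneracy I would decompose $\Lambda$, as a $\mathbb{Z}_2$-module, into indecomposables of the three types $\mathbb{Z},\mathbb{Z}_-,R$ of Example \ref{ex:Bpt}. Since the evaluation $\Lambda\otimes\Lambda^*_-\to\mathbb{Z}_-$ annihilates the off-diagonal terms, the pairing splits as an orthogonal sum over these summands, and on a $\mathbb{Z}_-$- or $R$-summand the left-hand group $H^2_{\mathbb{Z}_2}(pt;-)$ vanishes (by Example \ref{ex:Bpt}), so only the $\Lambda=\mathbb{Z}$ summands matter. It then remains to check that for $\Lambda=\mathbb{Z}$ the pairing, which is the cup product $H^2(B\mathbb{Z}_2;\mathbb{Z})\times H^1(B\mathbb{Z}_2;\mathbb{Z}_-)\to H^3(B\mathbb{Z}_2;\mathbb{Z}_-)$ (using $\mathbb{Z}\otimes\mathbb{Z}_-\cong\mathbb{Z}_-$), is non-zero. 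Writing $H^*(B\mathbb{Z}_2;\mathbb{Z})=\mathbb{Z}[y]/(2y)$ with $\deg y=2$ and $H^*(B\mathbb{Z}_2;\mathbb{Z}_2)=\mathbb{Z}_2[x]$, the Bockstein sequence of $0\to\mathbb{Z}_-\xrightarrow{2}\mathbb{Z}_-\to\mathbb{Z}_2\to 0$ shows that the mod-$2$ reduction $H^{*}(B\mathbb{Z}_2;\mathbb{Z}_-)\to H^{*}(B\mathbb{Z}_2;\mathbb{Z}_2)$ is an isomorphism in odd degrees and carries the generator $v$ of $H^1(B\mathbb{Z}_2;\mathbb{Z}_-)$ to $x$; since also $y$ reduces to $x^2$, the class $y\smallsmile v$ reduces to $x^3\neq 0$, so $y\smallsmile v\neq 0$. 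This gives the non-degeneracy, hence the dichotomy, and the proof is complete.

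The main obstacle is this last computation: obtaining $\partial$ exactly as cup product against $c$---which requires carefully propagating the $\mathbb{Z}_-$-twists through the identifications and re-running the cocycle argument of Proposition \ref{prop:cup} in the shifted degree---and then confirming that the resulting $\mathbb{Z}_2$-valued pairing is non-degenerate. Everything else is formal bookkeeping with the long exact sequence of $0\to S^1_-\to\mathcal{A}(X)_-\to\Lambda^*_-\to 0$.
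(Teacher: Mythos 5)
Your proof is correct, and it takes a genuinely different route from the paper's. The paper handles the case $c=0$ by observing that the short exact sequence splits (so $H^2_{\mathbb{Z}_2}(pt;\mathcal{A}(X)_-)\cong\mathbb{Z}_2\oplus H^2_{\mathbb{Z}_2}(pt;\Lambda^*_-)$), and handles $c\neq 0$ by a direct cocycle computation: it identifies $\mathcal{A}(X)_-$ with $\Lambda^*\times S^1$ under a twisted $\mathbb{Z}_2$-action, decomposes $\Lambda$ into indecomposables, observes that some coordinate $c_i$ is odd, and then explicitly exhibits a coboundary $(\alpha-\sigma^*\alpha,\,-\alpha(\widetilde t))$ that kills the image of $(0,1/2)$, i.e.\ the generator of $H^2_{\mathbb{Z}_2}(pt;S^1_-)$. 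You instead look at the other end of the exact sequence, identify the connecting homomorphism $\partial$ with the cup-product pairing $\langle c,-\rangle$ (a one-degree-shifted analogue of Proposition \ref{prop:cup}), and then verify non-degeneracy of the resulting pairing $H^2(B\mathbb{Z}_2;\mathbb{Z})\times H^1(B\mathbb{Z}_2;\mathbb{Z}_-)\to H^3(B\mathbb{Z}_2;\mathbb{Z}_-)$ via mod-$2$ reduction and the Bockstein sequence. Your approach is more uniform and conceptually cleaner (both cases fall out of one pairing statement), and makes the parallel to Proposition \ref{prop:cup} explicit; the paper's direct computation avoids having to re-establish the cup-product formula in a shifted degree. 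Both of your key claims check out: the cocycle argument in the proof of Proposition \ref{prop:cup} is degree-independent, so $\partial=\langle c,-\rangle$ does hold one degree down; and the reduction $H^{2k+1}(B\mathbb{Z}_2;\mathbb{Z}_-)\to H^{2k+1}(B\mathbb{Z}_2;\mathbb{Z}_2)$ is an isomorphism because $H^{\mathrm{even}>0}(B\mathbb{Z}_2;\mathbb{Z}_-)=0$, so $y\smallsmile v$ is detected by $x^3$.
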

\begin{proof}
In the case $c = 0$, the exact sequence $0 \to S^1_- \to \mathcal{A}(X)_- \to \Lambda_- \to 0$ splits, giving
\[
H^2_{\mathbb{Z}_2}( pt ; \mathcal{A}(X)_- ) \cong H^2_{\mathbb{Z}_2}(pt ; S^1_-) \oplus H^2_{\mathbb{Z}_2}(pt ; \Lambda_-) \cong \mathbb{Z}_2 \oplus H^2_{\mathbb{Z}_2}(pt ; \Lambda_-).
\]
Thus for given $\widehat{c}$, there are exactly two (ungraded) Real affine gerbes on $X$ with $\lambda(\mathcal{G}) = \widehat{c}$ and these two gerbes differ by a point gerbe.

Now suppose $c \neq 0$. Let $u$ denote the generator of $H^2_{\mathbb{Z}_2}(pt ; S^1_-) \cong \mathbb{Z}_2$. We will show that the image of $u$ in $H^2_{\mathbb{Z}_2}( pt ; \mathcal{A}(X)_-)$ is trivial. This implies uniqueness of $\mathcal{G}$ such that $\lambda(\mathcal{G}) = \widehat{c}$.

Recall that for any $\mathbb{Z}_2$-module $M$, we have $H^2_{\mathbb{Z}_2}(pt ; M ) \cong \{ m \in M \; | \; \sigma(m) = m\}/\{m = n + \sigma(n), n \in M\}$. For the case $M = S^1_- = \mathbb{R}/\mathbb{Z}$ with action $\sigma(t) = -t$, the class $u$ is represented by $t = 1/2$.

Recall from Example \ref{ex:Bpt} that $\Lambda$ is isomorphic to a direct sum $\Lambda \cong \mathbb{Z}^t \oplus \mathbb{Z}^c_- \oplus R^r$ of copies of the trivial, cyclotomic and regular representations. One finds $H^2_{\mathbb{Z}_2}(pt ; \Lambda) \cong \mathbb{Z}_2^t$ and that any class in $H^2_{\mathbb{Z}_2}(pt ; \Lambda)$ can be represented by an element of the form $( v_1 , v_2 , \dots , v_t , 0 , \dots 0)$. In particular, $c = (c_1 , \dots , c_t , 0 , \dots , 0)$ for some integers $c_1, \dots , c_t$. Since $c \neq 0$, we must have that $c_i$ is odd for some $i$. Recall that $\sigma_X(x) = \sigma(x) + t$ where $t \in T$ satisfies $c = \delta [t]$. Let $\widetilde{t} \in V$ be a lift of $t$ to $v$. Then $c = \delta [t]$ means that $c = [ \widetilde{t} + \sigma(\widetilde{t})]$. Writing $\widetilde{t} = ( \widetilde{t}_1 , \dots , \widetilde{t}_t , \dots )$, we have that $c_i = 2\widetilde{t}_i$. Since $c_i$ is odd, this means that $\widetilde{t}_i$ is half an odd integer.

Elements of $\mathcal{A}(X)_-$ can be identified with pairs $\Lambda^* \times S^1$ where $(\lambda , u) \in \Lambda^* \times S^1$ corresponds to the affine function $x \mapsto f(x) = \lambda(x) + u \; ({\rm mod} \; \mathbb{Z})$. The action of $\sigma$ on $\mathcal{A}(X)_-$ is by $f \mapsto \overline{\sigma^*(f)}$. In terms of $(\lambda , u)$ this action is given by $(\lambda , u) \mapsto (-\sigma^*(\lambda) , -\lambda(\widetilde{t}) - u)$. Hence
\[
H^2_{\mathbb{Z}_2}(pt ; \mathcal{A}(X)_-) \cong \frac{\{ (\lambda , u ) \in \Lambda^* \times S^1 \; | \; \sigma^*(\lambda) = -\lambda, 2u = -\lambda(\widetilde{t}) \}}{ \{ (\alpha - \sigma^*\alpha , -\alpha(\widetilde{t}) ),  \alpha \in \Lambda^*  \} }.
\]
Let $\alpha \in \Lambda^*$ be given by $\alpha( x_1 , \dots , x_t , \dots ) = -x_i$. Then $\alpha - \sigma^*\alpha = 0$ and $-\alpha( \widetilde{t} ) = \widetilde{t}_i = c_i/2 = 1/2 \; ({\rm mod} \; \mathbb{Z})$. This shows that the image of $u$ in $H^2_{\mathbb{Z}_2}(pt ; \mathcal{A}(X)_-)$, which equals $[(0 , 1/2)]$ is trivial.
\end{proof}

\begin{proposition}\label{prop:tdpt}
Let $\Lambda, \Lambda^{\vee}$ be free, finite rank $\mathbb{Z}_2$-modules and let $\delta : \Lambda^{\vee} \to \Lambda^*_-$ be an isomorphism. Let $c \in H^2_{\mathbb{Z}_2}(pt ; \Lambda)$, $\widehat{c} \in H^2_{\mathbb{Z}_2}(pt ; \Lambda^{\vee})$. Let $X = X(\Lambda , c)$ and let $\mathcal{G}$ be a Real affine gerbe on $X$ with trivial grading and with $\lambda(\mathcal{G}) = \delta \widehat{c}$. Let $\widehat{X} = X(\Lambda^{\vee} , c^{\vee})$. Then there exists a Real affine gerbe $\widehat{\mathcal{G}}$ with trivial grading on $\widehat{X}$ and an isomorphism $\gamma : p^*(\mathcal{G}) \to \widehat{p}^*(\widehat{\mathcal{G}})$ such that $\delta(\gamma) = \delta$. Thus $(X , \mathcal{G})$ and $(\widehat{X} , \widehat{\mathcal{G}})$ are Real $T$-dual. Moreover, we have:
\begin{itemize}
\item[(1)]{If $c=0$ or $\widehat{c} \neq 0$, then $\widehat{\mathcal{G}}$ is uniquely determined by the above requirements.}
\item[(2)]{If $c \neq 0$ and $\widehat{c} = 0$, then there are two possible isomorphism classes for $\widehat{\mathcal{G}}$ which differ by a point gerbe.}
\end{itemize}
\end{proposition}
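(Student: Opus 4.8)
The plan is to read off the existence of $(\widehat{\mathcal{G}} , \gamma)$ from Theorem \ref{thm:rtd} and then to pin down the isomorphism class of $\widehat{\mathcal{G}}$ using Propositions \ref{prop:tdualc} and \ref{prop:affgrb} together with an analysis of the non-trivial point gerbe under pullback. Throughout I write $\mathcal{U}$ for the non-trivial point gerbe of trivial grading, i.e. the generator of $H^3_{\mathbb{Z}_2}(pt ; \mathbb{Z}_-) \subset \pi_0 GrGb_R(pt)$ (concretely $\mathcal{U} = L(\mathbb{R}_-) \otimes L(\mathbb{R}_-)$, with graded Dixmier--Douady class $(0,1)$), and for a space $Z$ with involution I write $\mathcal{U}_Z$ for its pullback along $Z \to pt$. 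Since $X \to pt$ and $\widehat{X} \to pt$ both factor through $C \to pt$ via $p$ and $\widehat{p}$, there are canonical isomorphisms $p^*(\mathcal{U}_X) \cong \mathcal{U}_C \cong \widehat{p}^*(\mathcal{U}_{\widehat{X}})$.

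First, for existence I would apply Theorem \ref{thm:rtd} with $\Lambda_\rho = \Lambda$, $\Lambda_{\widehat{\rho}} = \Lambda^{\vee}$, the given $\delta$, the torus bundle $X = X(\Lambda , c)$ and the gerbe $\mathcal{G}$, obtaining a Real $T$-dual $5$-tuple $(X , \mathcal{G} , \widehat{X}' , \widehat{\mathcal{G}} , \gamma)$ with $\delta(\gamma) = \delta$. By Proposition \ref{prop:tdualc} one has $\lambda(\mathcal{G}) = \delta \, \widehat{c}'$ with $\widehat{c}'$ the Chern class of $\widehat{X}'$; since $\lambda(\mathcal{G}) = \delta \, \widehat{c}$ and $\delta$ is an isomorphism, $\widehat{c}' = \widehat{c}$, so $\widehat{X}' = \widehat{X}$. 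As $\mathcal{G}$ has trivial grading so does the gerbe produced by Theorem \ref{thm:rtd}. Next, for the uniqueness statements I would let $\widehat{\mathcal{G}}$ be \emph{any} Real affine gerbe of trivial grading admitting an isomorphism $\gamma$ with $\delta(\gamma) = \delta$; Proposition \ref{prop:tdualc} then forces $\lambda(\widehat{\mathcal{G}}) = \delta^t c$, so by Proposition \ref{prop:affgrb} (applied to $\widehat{X}$) the gerbe $\widehat{\mathcal{G}}$ is unique if $\widehat{c} \neq 0$ and is determined up to tensoring by $\mathcal{U}_{\widehat{X}}$ if $\widehat{c} = 0$. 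In particular uniqueness holds whenever $\widehat{c} \neq 0$, which is one half of (1); the remaining task is, when $\widehat{c} = 0$, to decide which of the two candidates $\widehat{\mathcal{G}}$, $\widehat{\mathcal{G}} \otimes \mathcal{U}_{\widehat{X}}$ admits a valid $\gamma$.

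The decisive point is a triviality criterion for point gerbes: $\mathcal{U}_Z \cong 1$ exactly when the generator of $H^3_{\mathbb{Z}_2}(pt ; \mathbb{Z}_-)$ pulls back to $0$ in $H^3_{\mathbb{Z}_2}(Z ; \mathbb{Z}_-)$ (its grading is always trivial). If $Z$ has a $\sigma$-fixed point then $Z \to pt$ has an equivariant section, the pullback is split injective and $\mathcal{U}_Z \not\cong 1$; if $Z = X(M,m)$ with $m \neq 0$ then by Example \ref{ex:Bpt} $Z$ has a circle factor with the free involution $t \mapsto t + 1/2$, through which $Z \to pt$ factors, and $H^3_{\mathbb{Z}_2}(S^1 ; \mathbb{Z}_-) = 0$, so $\mathcal{U}_Z \cong 1$ (this is the content of the $c \neq 0$ case of Proposition \ref{prop:affgrb}). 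Now suppose $\widehat{c} = 0$. If also $c = 0$ then $C = X \times \widehat{X}$ has a fixed point, so $\mathcal{U}_C \not\cong 1$, whence $\widehat{p}^*(\widehat{\mathcal{G}})$ and $\widehat{p}^*(\widehat{\mathcal{G}} \otimes \mathcal{U}_{\widehat{X}}) \cong \widehat{p}^*(\widehat{\mathcal{G}}) \otimes \mathcal{U}_C$ are non-isomorphic; at most one of them is isomorphic to $p^*(\mathcal{G})$, and by existence at least one is, so $\widehat{\mathcal{G}}$ is unique, finishing (1). If $c \neq 0$ (and $\widehat{c} = 0$), the Chern class of $C$ is $(c,0) \neq 0$, so $\mathcal{U}_C \cong 1$ and likewise $\mathcal{U}_X \cong 1$; then $\widehat{p}^*(\widehat{\mathcal{G}} \otimes \mathcal{U}_{\widehat{X}}) \cong \widehat{p}^*(\widehat{\mathcal{G}}) \cong p^*(\mathcal{G})$, so the second candidate also admits an isomorphism with $p^*(\mathcal{G})$, and it remains to see such an isomorphism can be chosen with $\delta$-invariant $\delta$.

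This last point is where I expect the real work to be. I would take $\gamma_0$ with $\delta(\gamma_0) = \delta$ from existence, fix a Real trivialisation $s_X : 1 \to \mathcal{U}_X$, and set $\gamma_1 = \gamma_0 \otimes p^*(s_X) : p^*(\mathcal{G}) \to \widehat{p}^*(\widehat{\mathcal{G}}) \otimes \mathcal{U}_C \cong \widehat{p}^*(\widehat{\mathcal{G}} \otimes \mathcal{U}_{\widehat{X}})$. Computing $\delta(\gamma_1)$ with fibrewise trivialisations of $\mathcal{G}$ and of $\widehat{\mathcal{G}} \otimes \mathcal{U}_{\widehat{X}}$, the latter of the shape $\widehat{\psi} \otimes \widehat{s}$ with $\widehat{s} : 1 \to \mathcal{U}_{\widehat{X}}$, one gets that $\delta(\gamma_1) - \delta(\gamma_0)$ is the class of the line bundle $p^*(s_X) \cdot \widehat{p}^*(\widehat{s})^{-1}$ in $GrPic(C)/(p^* GrPic(X) + \widehat{p}^* GrPic(\widehat{X}))$. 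But $p^*(s_X)$ and $\widehat{p}^*(\widehat{s})$ are trivialisations of the single gerbe $\mathcal{U}_C$ lying in the $p^* GrPic(X)$- and $\widehat{p}^* GrPic(\widehat{X})$-cosets respectively, and both cosets contain $q^*(s_{pt})$ for any trivialisation $s_{pt} : 1 \to \mathcal{U}$ on $pt$; hence their ratio lies in $p^* GrPic(X) + \widehat{p}^* GrPic(\widehat{X})$ and $\delta(\gamma_1) = \delta$. So both candidates are Real $T$-duals of $(X , \mathcal{G})$ with $\delta$-invariant $\delta$, they differ by the point gerbe $\mathcal{U}_{\widehat{X}}$, and (2) follows. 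The main obstacle, then, is precisely this compatibility of the point-gerbe ambiguity with the $\delta$-invariant; everything else is a direct combination of Theorem \ref{thm:rtd}, Propositions \ref{prop:tdualc} and \ref{prop:affgrb}, and the triviality criterion for $\mathcal{U}_Z$.
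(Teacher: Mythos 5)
Your proof is correct, and the overall architecture matches the paper's: existence from Theorem \ref{thm:rtd}, matching of Chern classes via Proposition \ref{prop:tdualc}, the uniqueness count via Proposition \ref{prop:affgrb}, and the fixed-point dichotomy to decide triviality of pulled-back point gerbes. Where you genuinely diverge is in the case $c \neq 0$, $\widehat{c} = 0$. The paper's argument there is indirect: it applies Theorem \ref{thm:rtd} to $(\widehat{X}, \widehat{\mathcal{G}}_i)$ to produce some $T$-dual $(X, \mathcal{G}')$, notes $\lambda(\mathcal{G}') = \delta\widehat{c} = \lambda(\mathcal{G})$, and invokes the $c\neq 0$ uniqueness from Proposition \ref{prop:affgrb} to conclude $\mathcal{G}' \cong \mathcal{G}$, so both $\widehat{\mathcal{G}}_1$ and $\widehat{\mathcal{G}}_2$ must be $T$-dual to $(X,\mathcal{G})$ (after reversing roles). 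You instead construct the second isomorphism $\gamma_1 = \gamma_0 \otimes p^*(s_X)$ explicitly, using the Real triviality of $\mathcal{U}_X$ when $c\neq 0$, and verify $\delta(\gamma_1) = \delta(\gamma_0)$ by showing the two trivialisations $p^*(s_X)$, $\widehat{p}^*(\widehat{s})$ of $\mathcal{U}_C$ differ by something in $p^*GrPic(X) + \widehat{p}^*GrPic(\widehat{X})$ (via the common reference $q^*(s_{pt})$). This is more hands-on and uses more explicit gerbe bookkeeping, but it has the advantage of not relying on the symmetry of $T$-duality under reversing roles (i.e.\ the fact that inverting $\gamma$ and swapping $X \leftrightarrow \widehat{X}$ again yields a Poincar\'e isomorphism with $\delta$-invariant $\delta^t$), which the paper uses implicitly; your route makes the construction of both dual gerbes completely concrete.
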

\begin{proof}
The existence of $\widehat{\mathcal{G}}$ and $\gamma$ with the stated properties with immediate from Theorem \ref{thm:rtd}. It remains to prove assertions (1) and (2). Since $(X , \mathcal{G})$ and $(\widehat{X} , \widehat{\mathcal{G}})$ are $T$-dual, we have that $\lambda( \widehat{\mathcal{G}} ) = \delta^{t}(c)$. If $\widehat{c} \neq 0$, then there is a unique such gerbe up to isomorphism satisfying $\lambda( \widehat{\mathcal{G}} ) = \delta^{t}(c)$ by Proposition \ref{prop:affgrb}. So now assume that $\widehat{c} = 0$. In this case, there are exactly two isomorphism classes of gerbes on $\widehat{X}$ satisfying $\lambda( \widehat{\mathcal{G}} ) = \delta^{t}(c)$ and these two classes differ by a point gerbe. Denote them by $\widehat{\mathcal{G}}_1, \widehat{\mathcal{G}}_2$. If $c \neq 0$, then both $\widehat{\mathcal{G}}_1$ and $\widehat{\mathcal{G}}_2$ must be $T$-dual to $\mathcal{G}$, because $\mathcal{G}$ is the unique Real affine gerbe on $X$ with $\lambda( \mathcal{G}) = \delta (\widehat{c})$ by Proposition \ref{prop:affgrb}. Lastly, if $c$ and $\widehat{c}$ are both zero then both $\mathcal{G}$ and $\widehat{\mathcal{G}}$ are point gerbes. Clearly in this case if $\mathcal{G} \cong \widehat{\mathcal{G}}$, then they are $T$-dual. Conversely, if $\mathcal{G}$ and $\widehat{\mathcal{G}}$ are $T$-dual, then the existence of the isomorphism $\gamma : p^*(\mathcal{G}) \to \widehat{p}^*(\widehat{\mathcal{G}})$ means that $\mathcal{G}$ and $\widehat{\mathcal{G}}$ are isomorphic when pulled back to $C = X \times \widehat{X}$. But since $c$ and $\widehat{c}$ are zero, the involutions $\sigma_X$ and $\sigma_{\widehat{X}}$ have fixed points. Hence so does the product involution $\sigma_X \times \sigma_{\widehat{X}}$. Pulling back $\gamma$ to a fixed point of $\sigma_X \times \sigma_{\widehat{X}}$, we see that $\mathcal{G} \cong \widehat{\mathcal{G}}$. In particular, if $c$ and $\widehat{c}$ are both zero then $\widehat{\mathcal{G}}$ is uniquely determined.
\end{proof}

\section{The Real Fourier--Mukai transform}\label{sec:rfmt}

Let $B$ be a compact smooth manifold and let $\sigma : B \to B$ be a Real structure given by a smooth involution. Let $(X , \mathcal{G}), (\widehat{X} , \widehat{\mathcal{G}})$ be Real $T$-dual pairs over $B$ where $X,\widehat{X}$ have $n$-dimensional fibres. Let $\gamma : p^*(\mathcal{G}) \to \widehat{p}^*(\widehat{\mathcal{G}})$ be an isomorphism satisfying the Poincar\'e property. The {\em Real Fourier--Mukai transform} associated to this data is the group homomorphism
\[
\Phi : KR^*(X , \mathcal{G} \otimes L(V_\rho) ) \to KR^{*-n}(\widehat{X} , \widehat{\mathcal{G}})
\]
given by the composition
\begin{small}
\[
\xymatrix@C-1pc{
KR^*(X , \mathcal{G} \otimes L(V_\rho) ) \ar[r]^-{p^*} & KR^*( C , p^*(\mathcal{G}) \otimes L(V_\rho) ) \ar[r]^-{\gamma} & KR^*(C , \widehat{p}^*(\widehat{\mathcal{G}}) \otimes L(V_\rho)) \ar[r]^-{\widehat{p}_*} & KR^{*-n}(\widehat{X} , \widehat{\mathcal{G}})
}
\]
\end{small}

Our main result is:
\begin{theorem}\label{thm:rfm}
The Real Fourier--Mukai transform $\Phi : KR^*(X , \mathcal{G} \otimes L(V_\rho) ) \to KR^{*-n}(\widehat{X} , \widehat{\mathcal{G}})$ is an isomorphism for any Real $T$-dual pair $(X , \mathcal{G}), (\widehat{X} , \widehat{\mathcal{G}})$ and any isomorphism $\gamma : p^*(\mathcal{G}) \to \widehat{p}^*(\widehat{\mathcal{G}})$ satisfying the Poincar\'e property.
\end{theorem}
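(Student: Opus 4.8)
The proof proceeds exactly along the lines sketched in the introduction, in three stages.

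\emph{Stage 1: reduction to a point.} The statement is local over $B$, so I would induct over a decomposition of the compact $\mathbb{Z}_2$-manifold $B$ obtained by successively cutting along $\sigma$-invariant hypersurfaces, for instance regular level sets of a $\sigma$-invariant Morse function (with a $\sigma$-invariant metric providing the equivariant collars). Each cut has the form $B = X_- \cup_Y X_+$ as in Proposition \ref{prop:mv}, and this pulls back to analogous decompositions of $X$, $\widehat X$ and $C$ whose pieces are $\sigma$-invariant; the restrictions of $(X,\mathcal{G})$, $(\widehat X,\widehat{\mathcal{G}})$ and $\gamma$ to $U$, $V$ and $U\cap V$ are again Real $T$-dual data. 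Proposition \ref{prop:mv} then gives three long exact sequences, for $KR^*_c(X|_?, \mathcal{G}\otimes L(V_\rho))$, for $KR^*_c(\widehat X|_?, \widehat{\mathcal{G}})$ and for $KR^*_c(C|_?, \widehat p^*(\widehat{\mathcal{G}})\otimes L(V_\rho))$, linked by Fourier--Mukai transforms. One checks that these transforms commute with all the Mayer--Vietoris maps: with $(i_*,-j_*)$ and $k_*+l_*$ this is functoriality of pushforward and the projection formula, while with the connecting map $\delta = \iota_*\circ i_Y^*$ it follows from the explicit description of $\delta$ in Proposition \ref{prop:mv} together with the base change formula (property (5) of Section \ref{sec:tkr}), which is precisely what makes $\widehat p_*$ commute with the pullback $i_Y^*$ and the further pushforward $\iota_*$ (and $\gamma$ commutes with restriction and pushforward by naturality). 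The five lemma then propagates ``$\Phi$ is an isomorphism'' through the induction, whose base cases are: a chart on which $\mathbb{Z}_2$ acts freely, where twisted $KR$-theory is twisted complex $K$-theory of the quotient and $\Phi$ becomes the classical Fourier--Mukai isomorphism of \cite{bar2}; and a $\mathbb{Z}_2$-equivariantly contractible chart around a fixed point, where the Thom isomorphism identifies $\Phi$, up to a degree shift, with the Fourier--Mukai transform over the fixed point. Hence it suffices to treat $B = \{pt\}$.

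\emph{Stage 2: reduction to an indecomposable fibre.} With $B = \{pt\}$, $X = V/\Lambda$ and $\widehat X = \widehat V/\widehat\Lambda$ are dual tori with affine involutions. By Example \ref{ex:Bpt} the $\mathbb{Z}_2$-module $\Lambda$ splits into indecomposables of types $\mathbb{Z}$, $\mathbb{Z}_-$, $R$, inducing a splitting $(X,\sigma_X)\cong(X_0,\sigma_0)\times(X',\sigma')$ with $X_0$ indecomposable; using Propositions \ref{prop:affgrb} and \ref{prop:tdpt} and the vanishing of the relevant higher equivariant cohomology of a point with coefficients in $\mathbb{Z}_-$ and $R$, the gerbe $\mathcal{G}$ splits as an external product up to a point gerbe (which only shifts degrees, by the discussion at the end of Section \ref{sec:tkr}), and the $T$-dual data splits compatibly. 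Regarding $X_0\times X'\to X'$ as a trivial torus bundle over the compact $\mathbb{Z}_2$-manifold $X'$ and absorbing $\mathcal{G}'\otimes L(TX')$ into the coefficient gerbe, the decompositions of $p$, $\widehat p$ and $\gamma$ together with the projection formula express $\Phi$ as a composition of two partial Fourier--Mukai transforms: one with indecomposable fibre $X_0$ over base $X'$, followed by one with fibre $X'$ (fewer indecomposable summands) over base $\widehat X_0$. The first is an instance of the theorem for an indecomposable fibre over an arbitrary compact base, which Stage 1 reduces to the point case; the second is handled by induction on the number of summands. So it remains to prove the theorem for $B = \{pt\}$ with $X$ indecomposable.

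\emph{Stage 3: the indecomposable cases.} By the computations of Example \ref{ex:Bpt} and Proposition \ref{prop:affgrb}, an indecomposable Real affine torus over a point equipped with a Real affine gerbe is, up to isomorphism and a point gerbe, one of five: (1) $\mathbb{R}/\mathbb{Z}$ with $\sigma=\mathrm{id}$; (2) $\mathbb{R}/\mathbb{Z}$ with $\sigma(t)=t+\tfrac12$; (3) $\mathbb{R}/\mathbb{Z}$ with $\sigma(t)=-t$ and trivial gerbe; (4) $\mathbb{R}/\mathbb{Z}$ with $\sigma(t)=-t$ and $\lambda(\mathcal{G})\neq 0$; (5) $\mathbb{R}^2/\mathbb{Z}^2$ with $\sigma$ the coordinate swap; whose Real $T$-duals are respectively (3), (4), (1), (2) and (5) itself. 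In each case I would compute both twisted $KR$-groups explicitly, using that a free involution gives $KR^* = K^*$ of the quotient (case (2)), a trivial involution gives $KR^* = KO^*$ (case (1)), Atiyah's calculations of $KR^*$ of the low-dimensional spheres with involution (case (3)), the Thom isomorphism to absorb the gerbe twist and the factor $L(V_\rho)$ (a point gerbe in case (5)) (cases (4)–(5)), and the identification $KR^*(Y\times Y,\mathrm{swap})\cong K^*(Y)$ for case (5); then verify directly that $\Phi = \widehat p_*\circ\gamma\circ p^*$ is an isomorphism, the Poincar\'e property of $\gamma$ being exactly what matches the generators on the two sides. This completes the proof.

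The main obstacle is Stage 1: without the compatibility of the Fourier--Mukai transform with the Mayer--Vietoris connecting map, encoded in Proposition \ref{prop:mv} and its interaction with the base change formula, the reduction to a point would not go through. The computations in Stage 3 are more laborious but routine, amounting to manipulations of known $KR$-groups of circles and $2$-tori with involution.
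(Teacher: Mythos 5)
Your three-stage structure matches the paper's exactly: Mayer--Vietoris reduction to a point, factorisation into indecomposable Real tori, and a case-by-case verification for the five indecomposable types. But there are concrete gaps in Stages 1 and 3 that would need to be repaired.

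In Stage 1, the base cases you list (free charts and equivariantly contractible charts around a fixed point) are not sufficient for an induction via a $\sigma$-invariant Morse function. When a handle is attached at a \emph{fixed} critical point, the intersection $U\cap V$ is $S^{\lambda-1}\times(-1,1)\times D^{n-\lambda}$ with a nontrivial linear $\mathbb{Z}_2$-action on the sphere factor (coming from the splitting of the tangent space into $\pm 1$-eigenspaces), and that sphere will generally have fixed points and is neither free nor contractible. You would need an additional inductive scaffold for spheres with linear involutions. The paper sidesteps this by first splitting $B$ into a free part and a tubular neighbourhood of $B^\sigma$ (Lemma~\ref{lem:trivrfm}), handling the free part via double covers of a quotient Morse decomposition (Lemmas~\ref{lem:free}, \ref{lem:free2}), and doing Morse theory only on $B^\sigma$ where the involution is trivial (Lemma~\ref{lem:rfmtriv}); that stratified order of reductions is what makes all the intersections fall into the base cases.

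In Stage 3, two of your claimed identifications are false, and the difficulty of the remaining computations is underestimated. For case (2) (the free involution $\sigma(t)=t+\tfrac12$ on $\mathbb{R}/\mathbb{Z}$), it is not true that $KR^*(X)\cong K^*(X/\sigma)$: the paper computes $KR^2(X)=0$ and $KR^3(X)=\mathbb{Z}_2$, whereas $K^*(S^1)=\mathbb{Z}$ in every degree. For case (5), $KR^*(Y\times Y,\mathrm{swap})\ncong K^*(Y)$: take $Y=\mathrm{pt}$ to get $KO^*(\mathrm{pt})\neq K^*(\mathrm{pt})$, and in the paper's computation $KR^*$ of the swap torus has $\mathbb{Z}_2$-torsion. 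Furthermore, case $T_4$ (nontrivial affine gerbe over $\sigma(t)=-t$) is not a routine Thom-isomorphism absorption: the gerbe is not a point gerbe, and the long exact sequence of the pair $(X,A)$ leaves an extension ambiguity ($KR^3\cong\mathbb{Z}$ or $\mathbb{Z}\oplus\mathbb{Z}_2$) that the paper rules out by a bootstrap: one composes $\Phi$ with the reverse transform $\Phi'$, shows $\Psi=\Phi'\Phi$ is an $R^*$-module map compatible with the forgetful map to complex $K$-theory, and derives a contradiction from a putative extra $\mathbb{Z}_2$. That $R^*$-module / forgetful-map technique (rather than a direct generator-matching) is the key mechanism in all five cases, and without it your ``verify directly'' step would not close.
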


In the course of proving this result, we will also need to make use of the Fourier--Mukai transform for affine torus bundles over a non-compact base. In such cases we use compactly supported $K$-theory, so the Real Fourier--Mukai transform takes the form of a map
\[
\Phi : KR^*_c(X , \mathcal{G} \otimes L(V_\rho) ) \to KR^{*-n}_c(\widehat{X} , \widehat{\mathcal{G}}).
\]

In this section, we prove Theorem \ref{thm:rfm}. The proof consists of the following steps. In Section \ref{sec:redtor} we use a Mayer--Vietoris type argument to reduce to the case that the base space $B$ is a point. Thus $X$ and $\widehat{X}$ are dual tori. We consider this case in Section \ref{sec:gentor} and we show that if $(X , \sigma)$ factors as a product of Real tori $(X_1 , \sigma_1) \times \cdots \times (X_k , \sigma_k)$ then the Fourier--Mukai transform similarly factors into a composition of partial Fourier--Mukai transforms. Then using another Mayer--Vietoris type argument we reduce to the case that $(X,\sigma)$ is indecomposable (i.e. does not factor into a product of lower-dimensional Real tori). Finally in Section \ref{sec:indtor} we prove the Real Fourier--Mukai transform is an isomorphism in the case of indecomposable tori.

\subsection{Reduction to torus case}\label{sec:redtor}

In this section we reduce the problem of proving that the Real Fourier--Mukai transform is an isomorphism for affine torus bundles over an arbitrary base $B$ to the case where the base is a point and hence $X$, $\widehat{X}$ are dual tori.

The general strategy is as follows. We allow the base to be non-compact, but in such cases the Real Fourier--Mukai transform will be defined for compactly supported $KR$-theory. Suppose $(X,\mathcal{G})$, $(\widehat{X} , \widehat{\mathcal{G}})$ are $T$-dual Real affine torus bundles over $B$ and $\gamma : p^*(\mathcal{G}) \to \widehat{p}^*(\widehat{\mathcal{G}})$ is an isomorphism satisfying the Poincar\'e property. Then we get a Real Fourier--Mukai transform
\[
\Phi = \Phi(X , \mathcal{G} , \widehat{X} , \widehat{\mathcal{G}} , \gamma ) : KR^*_c( X , L(V_\rho) \otimes \mathcal{G}) \to KR^{*-m}_c( \widehat{X} , \widehat{\mathcal{G}})
\]
where $m$ is the dimension of the fibres. We will say that $B$ is {\em RFM} if $\Phi$ is an isomorphism for all such tuples $(X , \mathcal{G} , \widehat{X} , \widehat{\mathcal{G}} , \gamma )$ defined over $B$.

Given a smooth manifold $B$, suppose that we have a decomposition $B = B_- \cup_Y \cup B_+$ where $B_+,B_-$ are smooth manifolds with boundary $Y$ and $Y \to B$ is proper. Assume $B_-$ has a collar neighbourhood $(-2 , 0] \times Y$, $B_+$ has a collar neighbourhood $[0,2) \times Y$, set $U = B_- \cup_Y [0,1) \times Y$, $V = (-1,0] \times Y \cup_Y B_+$ so that $\{U,V\}$ is an open cover of $B$. Assume $\sigma$ preserves the decomposition $B = B_+ \cup_Y \cup B_-$ and the action on the collar $(-2,2) \times Y$ is of the form $\sigma(t,y) = (t , \sigma_Y(y))$, where $\sigma_Y = \sigma|_Y$. By restriction we get affine torus bundles over $U$ denotes $X|_U, \widehat{X}|_V$ and similarly for $V, U \cap V$. The tuple $(X , \mathcal{G} , \widehat{X} , \widehat{\mathcal{G}} , \gamma )$ can be restricted to $U$ and we let $\Phi|_U$ denote the corresponding Fourier--Mukai transform over $U$. Similarly we have $\Phi|_V$ and $\Phi|_{U \cap V}$. Consider the following diagram:

\begin{small}
\[
\xymatrix@C-1.2pc{
\cdots \ar[r] & KR^a_c( X|_{U \cap V} , L(V_\rho) \otimes \mathcal{G}) \ar[rr]^-{(i_* , -j_*)} \ar[d]^-{\Phi|_{U \cap V}} & & {\begin{matrix}KR^a_c(X|_U , L(V_\rho) \otimes \mathcal{G}) \\ \oplus \\ KR^a_c(X|_V , L(V_\rho) \otimes \mathcal{G})\end{matrix}} \ar[rr]^-{k_* + l_*} \ar[d]^-{ \Phi|_U \oplus \Phi|_V } & & KR^a_c(X , \mathcal{G}) \ar[r]^-{\delta} \ar[d]^-{\Phi} & \cdots \\
\cdots \ar[r] & KR^{a-m}_c( \widehat{X}|_{U \cap V} , \widehat{\mathcal{G}}) \ar[rr]^-{(i_* , -j_*)} & & {\begin{matrix}KR^{a-m}_c(\widehat{X}|_U , \widehat{\mathcal{G}}) \\ \oplus \\ KR^{a-m}_c(\widehat{X}|_V , \widehat{\mathcal{G}}) \end{matrix}}  \ar[rr]^-{k_* + l_*} & & KR^{a-m}_c(\widehat{X} , \widehat{\mathcal{G}}) \ar[r]^-{\delta} & \cdots
}
\]
\end{small}
In this diagram, the rows are Mayer--Vietoris sequences, as in Proposition \ref{prop:mv}. Commutativity easily follows from the definition of the Fourier--Mukai transform and the definition of $\delta$ given in Proposition \ref{prop:mv}. By the five-lemma, if $\Phi|_U, \Phi|_V$ and $\Phi|_{U \cap V}$ are isomorphisms, then so is $\Phi$. Therefore, if $U,V$ and $U \cap V$ are RFM, then so is $B$.

Now the idea is to prove that a general compact smooth manifold $B$ with Real structure $\sigma$ is RFM by decomposing it into simpler pieces, each of which is RFM and using the above Mayer--Vietorus argument. We will also need to make use of a few other reduction steps which we state as lemmas.

\begin{lemma}\label{lem:thomrfm}
Let $B_0$ be a compact smooth manifold with smooth involution and let $r : B \to B_0$ be a Real vector bundle over $B_0$. If $B_0$ is RFM, then so is $B$.
\end{lemma}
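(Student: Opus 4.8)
The plan is to deduce that the Real Fourier--Mukai transform $\Phi_B$ over $B$ is an isomorphism from the corresponding statement over $B_0$ by means of the Thom isomorphism, after first arranging that the $5$-tuple in question over $B$ is pulled back from $B_0$. Since $r : B \to B_0$ is a Real vector bundle, its zero section is a $\mathbb{Z}_2$-equivariant homotopy equivalence, so the invariants classifying a Real affine torus bundle (its monodromy $\rho : \Gamma \to GL(\Lambda)$ and its equivariant Chern class), an affine graded Real gerbe (its grading class together with its affine Dixmier--Douady class, cf.\ Propositions \ref{prop:affgrbiso} and \ref{prop:cup}) and a Poincar\'e isomorphism (whose set is a torsor over $GrPic_R$) are all homotopy invariant. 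Hence any Real $T$-dual $5$-tuple $(X , \mathcal{G} , \widehat{X} , \widehat{\mathcal{G}} , \gamma)$ over $B$ is isomorphic to the pullback under $r$ of a Real $T$-dual $5$-tuple $(X_0 , \mathcal{G}_0 , \widehat{X}_0 , \widehat{\mathcal{G}}_0 , \gamma_0)$ over $B_0$, and since $\Phi$ is natural with respect to isomorphisms of $5$-tuples, I would reduce to the case that the $5$-tuple over $B$ is exactly such a pullback.

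Assume this. Let $E \to B_0$ denote the bundle $r$ (so $B$ is its total space), write $\widetilde{r}_X : X \to X_0$, $\widetilde{r}_{\widehat{X}} : \widehat{X} \to \widehat{X}_0$, $\widetilde{r}_C : C \to C_0$ for the induced maps and $q_0 : C_0 \to B_0$ for the projection. Then $X$, $\widehat{X}$, $C$ are the total spaces of the Real vector bundles $E_X := \pi_0^* E$, $E_{\widehat{X}} := \widehat{\pi}_0^* E$, $E_C := q_0^* E$ (with $p$, $\widehat{p}$ the corresponding bundle maps and $E_C = p_0^* E_X = \widehat{p}_0^* E_{\widehat{X}}$), and $V_\rho = \widetilde{r}_X^*(V_{\rho_0})$, so $L(V_\rho) = \widetilde{r}_X^*(L(V_{\rho_0}))$. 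Put $\mathcal{G}_0' := \mathcal{G}_0 \otimes L(E_X)^{-1}$ and $\widehat{\mathcal{G}}_0' := \widehat{\mathcal{G}}_0 \otimes L(E_{\widehat{X}})^{-1}$; these are again affine graded Real gerbes, and since $L(E_X)$, $L(E_{\widehat{X}})$ pull back (under $p_0$, resp.\ $\widehat{p}_0$) to the common gerbe $L(E_C)$ on $C_0$, tensoring $\gamma_0$ by the identity of $L(E_C)^{-1}$ gives a Poincar\'e isomorphism $\gamma_0' : p_0^*(\mathcal{G}_0') \to \widehat{p}_0^*(\widehat{\mathcal{G}}_0')$ with $\delta(\gamma_0') = \delta(\gamma_0)$, the latter because $L(E_X), L(E_{\widehat{X}})$ are pulled back from $B_0$ and so contribute nothing to the fibrewise invariant $\delta$. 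Thus $(X_0 , \mathcal{G}_0' , \widehat{X}_0 , \widehat{\mathcal{G}}_0' , \gamma_0')$ is a Real $T$-dual $5$-tuple over $B_0$. Using $\mathcal{G}_0 = \mathcal{G}_0' \otimes L(E_X)$ and $\widehat{\mathcal{G}}_0 = \widehat{\mathcal{G}}_0' \otimes L(E_{\widehat{X}})$, the Thom isomorphisms (property (3)) for $\widetilde{r}_X$ and $\widetilde{r}_{\widehat{X}}$ identify $KR^*_c(X , \mathcal{G} \otimes L(V_\rho))$ with $KR^{*-k}(X_0 , \mathcal{G}_0' \otimes L(V_{\rho_0}))$ and $KR^{*-n}_c(\widehat{X} , \widehat{\mathcal{G}})$ with $KR^{*-k-n}(\widehat{X}_0 , \widehat{\mathcal{G}}_0')$, where $k = \dim B - \dim B_0$; the point is that the $L(E_X)$- and $L(E_{\widehat{X}})$-twists produced by the Thom isomorphism are precisely those absorbed into the primed gerbes, and the degrees match since $\Phi_{B_0}$ shifts degree by $-n$.

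The key step is to check that these Thom isomorphisms intertwine $\Phi_B$ with the Fourier--Mukai transform $\Phi_{B_0}$ of $(X_0 , \mathcal{G}_0' , \widehat{X}_0 , \widehat{\mathcal{G}}_0' , \gamma_0')$, i.e.\ that
\begin{small}
\[
\xymatrix{
KR^{*-k}(X_0 , \mathcal{G}_0' \otimes L(V_{\rho_0})) \ar[r]^-{\Phi_{B_0}} \ar[d]_-{\cong} & KR^{*-k-n}(\widehat{X}_0 , \widehat{\mathcal{G}}_0') \ar[d]^-{\cong} \\
KR^*_c(X , \mathcal{G} \otimes L(V_\rho)) \ar[r]^-{\Phi_B} & KR^{*-n}_c(\widehat{X} , \widehat{\mathcal{G}})
}
\]
\end{small}
commutes, the vertical maps being the Thom isomorphisms. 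Since $\Phi = \widehat{p}_* \circ \gamma \circ p^*$, this splits into the compatibilities of $p^*$, $\gamma$ and $\widehat{p}_*$ with the Thom isomorphisms, all passing through the intermediate groups over $C$ and $C_0$. Compatibility with $p^*$ holds because $\widetilde{r}_C^* p_0^* = p^* \widetilde{r}_X^*$ and the Thom class is natural under pullback of bundles, so $p^*(\tau_{E_X}) = \tau_{E_C}$. Compatibility with $\gamma$ holds because the map of twisted $KR$-groups induced by a gerbe isomorphism commutes with pullback and with cup products, and $\gamma = \widetilde{r}_C^*(\gamma_0')$ affects only the $p^*\mathcal{G}$-twisted tensor factor, hence commutes with multiplication by $\tau_{E_C}$. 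Compatibility with $\widehat{p}_*$ follows because the Thom isomorphism of $\widetilde{r}_{\widehat{X}}$ is inverse to the pushforward $(\widetilde{r}_{\widehat{X}})_*$ (by the projection formula and $\pi_*(\tau) = 1$), and likewise for $\widetilde{r}_C$, so the required identity $(\widetilde{r}_{\widehat{X}})_* \circ \widehat{p}_* = (\widehat{p}_0)_* \circ (\widetilde{r}_C)_*$ is simply functoriality of pushforward applied to $\widetilde{r}_{\widehat{X}} \circ \widehat{p} = \widehat{p}_0 \circ \widetilde{r}_C$. Given the square, $\Phi_{B_0}$ is an isomorphism (as $B_0$ is RFM), hence so is $\Phi_B$; since the $5$-tuple over $B$ was arbitrary up to isomorphism, $B$ is RFM.

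The step I expect to be the main obstacle is the twist and degree bookkeeping in the second paragraph --- verifying that the lifting-gerbe factors introduced by the Thom isomorphism genuinely assemble into a Real $T$-dual $5$-tuple over $B_0$ and that all degrees align --- rather than the three naturality statements above, which are standard properties of pullback, pushforward, Thom classes and gerbe isomorphisms in twisted $KR$-theory.
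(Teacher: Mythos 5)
Your proposal is correct and follows essentially the same route as the paper: reduce to a pullback $5$-tuple using $\mathbb{Z}_2$-equivariant homotopy invariance of the classifying data, then relate $\Phi_B$ to a Fourier--Mukai transform on $B_0$ by means of the Thom isomorphisms for the vector bundles $X \to X_0$, $\widehat{X} \to \widehat{X}_0$ (equivalently their pushforwards) and conclude by naturality. The paper states the commutativity of the resulting square as ``easily seen'' with the $L(B)^{-1}$ twists carried along implicitly, whereas you make the bookkeeping explicit by absorbing those lifting-gerbe factors into a genuine Real $T$-dual $5$-tuple $(X_0, \mathcal{G}_0', \widehat{X}_0, \widehat{\mathcal{G}}_0', \gamma_0')$ over $B_0$ and then verifying the three compatibilities with $p^*$, $\gamma$, and $\widehat{p}_*$ separately; this is a useful expansion of exactly the step the paper compresses, not a different argument.
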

\begin{proof}
By the classification of Real affine torus bundles, any Real affine torus bundle $X \to B$ is isomorphic to a pullback $X \cong r^*(X_0)$ of a Real affine torus bundle $X_0 \to B_0$, namely $X_0$ is the restriction of $X$ to the zero section. Similary any graded Real gerbe $\mathcal{G}$ on $X$ is isomorphic to the pullback $\mathcal{G} \cong r^*(\mathcal{G}_0)$, where $\mathcal{G}_0$ is the restriction of $\mathcal{G}$ to the zero section. If $(X,\mathcal{G}), (\widehat{X}, \widehat{\mathcal{G}})$ are Real $T$-dual pairs on $B$ and $\gamma : p^*(\mathcal{G}) \to \widetilde{p}^*(\widetilde{\mathcal{G}})$ is an isomorphism satisfying the Poincar\'e property, then $(X , \mathcal{G}) \cong r^*( X_0 , \mathcal{G}_0)$, $(\widehat{X} , \widehat{\mathcal{G}}) \cong r^*( \widehat{X}_0 , \widehat{\mathcal{G}}_0)$ are isomorphic to pullbacks and under these identifications, $\gamma$ is equivalent to the pullback of an isomorphism $\gamma_0 : p^*(\mathcal{G}_0) \to \widetilde{p}^*(\widetilde{\mathcal{G}}_0)$ satisfying the Poincar\'e property. Thus any tuple $(X , \mathcal{G} , \widehat{X} , \widehat{\mathcal{G}} , \gamma )$ on $B$ can be identified with the pullback of a tuple $(X_0 , \mathcal{G}_0 , \widehat{X}_0 , \widehat{\mathcal{G}}_0 , \gamma_0 )$ on $B_0$. If we let $\Phi$ and $\Phi_0$ denote the Real Fourier--Mukai transforms with respect to these tuples then it is easily seen that we have a commutative diagram
\[
\xymatrix{
KR_c^*(X , L(V_\rho) \otimes \mathcal{G}) \ar[r]^-{r_*} \ar[d]^-{\Phi} & KR^{*-n}( X_0 , L(V_\rho) \otimes L(B)^{-1} \otimes \mathcal{G}_0) \ar[d]^-{\Phi_0} \\
KR_c^{*-n}(\widehat{X} , \widehat{\mathcal{G}}) \ar[r]^-{r_*} & KR^{*-n-m}(\widehat{X}_0 , L(B)^{-1} \otimes \mathcal{G}_0)
}
\]
where $L(B)$ denotes the lifting gerbe of $B \to B_0$ (thought of as a vector bundle over $B_0$), $n$ is dimension of the fibres of $B \to B_0$, $m$ is the dimension of the fibres of $X \to B$. The horizontal maps are isomorphisms (by the Thom isomorphism) and $\Phi_0$ is an isomorphism by the assumption that $B$ is RFM. Hence $\Phi$ is also an isomorphism.
\end{proof}

\begin{lemma}\label{lem:sphere}
Suppose that $B = \{pt\}$ is RFM. Then $RFM$ holds for any sphere $S^n$, equipped with trivial involution.
\end{lemma}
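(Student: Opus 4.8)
The plan is to prove the lemma by induction on $n$, using the Mayer--Vietoris reduction set up above together with Lemma \ref{lem:thomrfm}.

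First I would treat the base case $n = 0$. The sphere $S^0$ with trivial involution consists of two fixed points, so a Real $T$-dual tuple $(X,\mathcal{G},\widehat{X},\widehat{\mathcal{G}},\gamma)$ over $S^0$ amounts to a pair of such tuples, one over each point, and the associated Real Fourier--Mukai transform splits accordingly as a direct sum of the two pointwise transforms (using that $KR^*_c$ sends disjoint unions to direct sums and that $p^*$, $\gamma$ and $\widehat{p}_*$ respect this splitting). Since $\{pt\}$ is RFM, each summand is an isomorphism, hence so is $\Phi$; thus $S^0$ is RFM.

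For the inductive step, assume $S^{n-1}$ is RFM, where $n \ge 1$. Write $S^n = B_- \cup_Y B_+$, where $B_-$ and $B_+$ are the closed lower and upper hemispheres, each diffeomorphic to $D^n$, glued along the equator $Y = S^{n-1}$. The round metric furnishes collar neighbourhoods $(-2,0] \times Y \subset B_-$ and $[0,2) \times Y \subset B_+$, and since the involution on $S^n$ is trivial it preserves this decomposition and acts as $(t,y) \mapsto (t,y)$ on the collar, so we are exactly in the situation of the Mayer--Vietoris argument above. The resulting open sets $U = B_- \cup_Y [0,1) \times Y$ and $V = (-1,0] \times Y \cup_Y B_+$ are each equivariantly diffeomorphic to $\mathbb{R}^n$ with trivial involution, i.e.\ to the total space of the trivial Real vector bundle over a point, while $U \cap V = (-1,1) \times Y$ is equivariantly the total space of the trivial Real line bundle over $S^{n-1}$. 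Applying Lemma \ref{lem:thomrfm} with $B_0 = \{pt\}$ shows that $U$ and $V$ are RFM, and applying it with $B_0 = S^{n-1}$, which is RFM by the inductive hypothesis, shows that $U \cap V$ is RFM. By the Mayer--Vietoris argument of this section, $S^n$ is then RFM, completing the induction.

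There is no serious obstacle here: the argument is formal once one has the Mayer--Vietoris compatibility (already established) and Lemma \ref{lem:thomrfm}. The only mildly delicate points are the base case and the observation that the three pieces $U$, $V$, $U \cap V$ are genuinely trivial Real vector bundles over lower-dimensional bases, so that Lemma \ref{lem:thomrfm} applies with a strictly smaller base and the induction actually terminates.
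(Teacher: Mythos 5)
Your proof is correct and takes essentially the same route as the paper: induction on $n$, with the base case $n=0$ handled by the disjoint union of two points, and the inductive step given by the hemisphere decomposition $S^n = B_- \cup_{S^{n-1}} B_+$, Lemma \ref{lem:thomrfm}, and the Mayer--Vietoris argument. The only difference is that you spell out the identifications of $U$, $V$, and $U \cap V$ with trivial Real vector bundles in more detail, which is a reasonable elaboration but not a different argument.
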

\begin{proof}
We use induction on $n$. The case $n=0$ holds because $S^0$ is the disjoint union of two points. For $n > 0$, write $S^n = B_+ \cup_{S^{n-1}} B_-$, where $B_+,B_-$ are $n$-dimensional closed discs. The interiors of $B_+, B_-$ are RFM by Lemma \ref{lem:thomrfm} and $S^{n-1}$ is RFM by the inductive hypothesis. Thus $S^n$ is RFM by a Mayer--Vietoris argument.
\end{proof}

\begin{lemma}\label{lem:rfmtriv}
Suppose that $\{pt\}$ is RFM. Then any compact smooth manifold $B$ with trivial involution is RFM.
\end{lemma}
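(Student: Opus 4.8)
The plan is to build $B$ up from handles and, at each stage, apply the Mayer--Vietoris reduction set up at the beginning of Section~\ref{sec:rfmt}: if $X=\mathcal{U}\cup\mathcal{V}$ arises from a separating hypersurface as in Proposition~\ref{prop:mv}, and $\mathcal{U}$, $\mathcal{V}$, $\mathcal{U}\cap\mathcal{V}$ are all RFM, then $X$ is RFM. Since the involution on $B$ is trivial (we take $B$ closed), the equivariance hypotheses there are automatic. I would fix a finite handle decomposition $\emptyset=M_{-1}\subset M_0\subset\cdots\subset M_N=B$, where $M_j$ is obtained from $M_{j-1}$ by attaching one handle $h_j\cong D^{k_j}\times D^{n-k_j}$ ($n=\dim B$) along an embedding $S^{k_j-1}\times D^{n-k_j}\hookrightarrow\partial M_{j-1}$, and prove by induction on $j$ that the open manifold $\operatorname{int}(M_j)$ is RFM (RFM being understood via the compactly supported transform, which is available over noncompact bases); since $\operatorname{int}(M_N)=B$, the case $j=N$ proves the Lemma. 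The base case: $\operatorname{int}(M_0)$ is a disjoint union of copies of $\mathbb{R}^n$, and each $\mathbb{R}^n$ is RFM by Lemma~\ref{lem:thomrfm} (a real vector bundle over a point, which is RFM by hypothesis), while RFM is closed under finite disjoint unions because $KR^*_c$ and the Real Fourier--Mukai transform both split over a disjoint union.

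For the inductive step, write $h=h_j=D^k\times D^{n-k}$ attached along $\phi:S^{k-1}\times D^{n-k}\hookrightarrow\partial M_{j-1}$. In $\operatorname{int}(M_j)$ the free face $D^k\times S^{n-k-1}$ of $h$ is deleted (it lies on $\partial M_j$) while the attaching face becomes interior. I would cover $\operatorname{int}(M_j)$ by $\mathcal{U}\cup\mathcal{V}$, where $\mathcal{V}$ is an open neighbourhood of the handle with $\mathcal{V}\cong\mathbb{R}^n$ (take $\operatorname{int}(D^k_{1+\epsilon})\times\mathbb{R}^{n-k}$, which already contains the whole interiorised handle), and $\mathcal{U}=\operatorname{int}(M_j)$ minus a slightly smaller closed handle, so that $\mathcal{U}\cong\operatorname{int}(M_{j-1})$ (deleting a collar of the attaching face from the boundary does not change the diffeomorphism type of the open manifold $\operatorname{int}(M_{j-1})$). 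The overlap is a collar of the interiorised attaching face, so $\mathcal{U}\cap\mathcal{V}\cong\mathbb{R}\times(S^{k-1}\times\mathbb{R}^{n-k})\cong S^{k-1}\times\mathbb{R}^{n-k+1}$, matching the format of Proposition~\ref{prop:mv} with separating hypersurface $Y=S^{k-1}\times\mathbb{R}^{n-k}$. Now $\mathcal{U}$ is RFM by the inductive hypothesis; $\mathcal{V}\cong\mathbb{R}^n$ is RFM by Lemma~\ref{lem:thomrfm}; and $\mathcal{U}\cap\mathcal{V}$, being the trivial rank-$(n-k+1)$ real vector bundle over $S^{k-1}$, is RFM by Lemma~\ref{lem:thomrfm} and Lemma~\ref{lem:sphere} (which makes every sphere RFM, using that a point is). The five-lemma applied to the map of Mayer--Vietoris sequences induced by $\Phi$ then gives that $\operatorname{int}(M_j)$ is RFM, completing the induction.

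The one point that needs care --- and the only real obstacle --- is that the separating hypersurface at a handle attachment is $Y=S^{k-1}\times\mathbb{R}^{n-k}$, which for $1\le k\le n-1$ is neither compact nor boundaryless, whereas Proposition~\ref{prop:mv} is stated for a closed separating hypersurface. I would resolve this by observing that the proofs of Proposition~\ref{prop:mv} and of the compatibility of $\Phi$ with the Mayer--Vietoris sequence use only: that $Y$ is the entire outgoing boundary of $X_-$ together with its collar; that $Y\hookrightarrow\operatorname{int}(M_j)$ is proper (automatic, $Y$ being a closed submanifold); and the Thom isomorphism for the trivial line bundle $\mathbb{R}\times Y\to Y$ --- none of which requires $Y$ to be compact or closed. (Compatibility of $\Phi$ with the maps $i_*,j_*,k_*,l_*$ is the composition property of pushforward, and with $\delta=\iota_*\circ i_Y^*$ it is the base-change property of Section~\ref{sec:tkr} together with the compatibility of $p^*$ and $\gamma$ with restriction, exactly as in the diagram at the start of Section~\ref{sec:rfmt}.) One could instead decompose $B$ along the closed level sets of a Morse function, but crossing a critical value still produces a handle whose attaching region is such a $Y$, so the direct handle induction is cleaner. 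The remaining ingredients --- the three diffeomorphism identifications of $\mathcal{U}$, $\mathcal{V}$, $\mathcal{U}\cap\mathcal{V}$ and the existence of a finite handle decomposition --- are standard, so I expect no further difficulty.
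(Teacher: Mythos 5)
Your proof is correct and is essentially the paper's argument: the paper phrases the same handle-by-handle induction in terms of level sets of a Morse function, and uses the identical cover $U\cong B_{i-1}$, $V\cong\mathbb{R}^n$, $U\cap V\cong S^{\lambda_i-1}\times\mathbb{R}^{n-\lambda_i+1}$ with Lemmas~\ref{lem:thomrfm} and~\ref{lem:sphere} as inputs. Your worry about the separating hypersurface $Y=S^{k-1}\times\mathbb{R}^{n-k}$ being non-compact is in fact unnecessary: Proposition~\ref{prop:mv} only assumes $Y\hookrightarrow X$ is proper (which holds automatically for a closed subset), never that $Y$ is compact, so it already applies verbatim.
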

\begin{proof}
Choose a Morse function $f : B \to \mathbb{R}$ with critical points $b_1, \dots , b_k$ and with $c_1 < c_2 < \cdots < c_k$, where $c_i = f(b_i)$. Choose real numbers $a_0, \dots , a_k$ with $a_0 < c_1 < a_1 < c_2 < \cdots < c_k  < a_k$ and set $B_i = f^{-1}(-\infty , a_i )$. Then each $B_i$ is a smooth manifold, $B_0$ is empty, $B_k = B$ and $B_i$ is obtained from $B_{i-1}$ by attaching a handle of some index $\lambda_i$. It follows that $B_i$ admits an open cover given by $U = B_{i-1}$, $V = D^{\lambda_i} \times D^{n-\lambda_i}$ and $U \cap V \cong S^{\lambda_i-1} \times (-1,1) \times D^{n-\lambda_i}$, where $D^j$ denotes the open unit disc in $\mathbb{R}^j$ and $n = dim(B)$. Both $V$ and $U \cap V$ are RFM by Lemmas \ref{lem:thomrfm} and \ref{lem:sphere}. Applying a Mayer--Vietoris type argument to the cover $U,V$, we see that if $B_{i-1}$ is RFM, then so is $B_i$. Hence by induction on $i$, $B$ is RFM.
\end{proof}

\begin{lemma}\label{lem:free}
Let $B$ be a compact smooth manifold and let $\sigma$ be a smooth involution on $B$ which acts freely. Then $(B , \sigma)$ is RFM.
\end{lemma}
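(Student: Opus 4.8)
The plan is to reduce the statement to the ordinary (non-equivariant) complex Fourier--Mukai transform, exploiting that a free involution carries no genuinely Real content, combined with a Mayer--Vietoris argument in the spirit of Lemmas \ref{lem:thomrfm}--\ref{lem:rfmtriv} built from an equivariant handle decomposition. First I would establish the \emph{split case}: if $B_0$ is a compact smooth manifold with trivial involution and $B = \mathbb{Z}_2 \times B_0$ with $\sigma$ the free involution exchanging the two copies of $B_0$, then $B$ is RFM. Here the operation ``forget the Real structure and restrict to $\{0\} \times B_0$'' should identify a Real affine torus bundle on $\mathbb{Z}_2 \times B_0$ with an affine torus bundle on $B_0$; a graded Real gerbe with a graded complex gerbe; and a $\mathcal{G}$-twisted Real vector bundle with an ordinary complex twisted bundle (in each case the data on the copy $\{1\} \times B_0$ is forced by the Real structure and $\sigma$-equivariance). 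In particular one obtains natural isomorphisms $KR^*(\mathbb{Z}_2 \times Z, \mathcal{G}) \cong K^*(Z, \mathcal{G}_0)$ with ordinary twisted complex $K$-theory, compatible with pullback, pushforward, and the lifting-gerbe twist (the restriction of $L(V_\rho)$ being the usual $spin^c$-lifting gerbe of the vertical tangent bundle of $X_0 \to B_0$). Under this dictionary a Real $T$-dual $5$-tuple over $\mathbb{Z}_2 \times B_0$ corresponds to a topologically $T$-dual $5$-tuple over $B_0$, with the monodromy condition $\widehat{\rho} = \rho^\vee$ and the Poincar\'e property becoming the ordinary ones, and $\Phi$ becomes the complex Fourier--Mukai transform over $B_0$. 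Since the latter is an isomorphism for $T$-dual pairs over an arbitrary base \cite{bhm,bar2} (when $B_0$ is non-compact one works with compactly supported $K$-theory, and the statement reduces to the classical case of a point by the Thom isomorphism exactly as in Lemma \ref{lem:thomrfm}), the space $\mathbb{Z}_2 \times B_0$ is RFM.

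Next I would treat a general free $(B, \sigma)$. Let $\bar B = B/\mathbb{Z}_2$, again a compact manifold, with double cover $p : B \to \bar B$. Choose a Morse function $\bar f$ on $\bar B$ with distinct critical values and lift it to the $\sigma$-invariant Morse function $f = \bar f \circ p$ on $B$; since $\sigma$ is free, the critical points of $f$ occur in $\sigma$-pairs of equal index, and the collar neighbourhoods used below can be taken $\sigma$-equivariant with $\sigma(t,y) = (t, \sigma_Y(y))$ because $f$ is $\sigma$-invariant. As in the proof of Lemma \ref{lem:rfmtriv}, pick regular values $a_0 < c_1 < a_1 < \dots < a_k$ and set $B_i = f^{-1}(-\infty, a_i)$, so that $B_0 = \emptyset$, $B_k = B$, each $B_i$ is $\sigma$-invariant, and $B_i$ is obtained from $B_{i-1}$ by attaching a $\sigma$-pair of handles of index $\lambda_i$, i.e.\ a single free handle. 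Hence $B_i$ admits a $\sigma$-invariant open cover $U = B_{i-1}$, $V = \mathbb{Z}_2 \times (D^{\lambda_i} \times D^{n - \lambda_i})$, $U \cap V \cong \mathbb{Z}_2 \times (S^{\lambda_i - 1} \times (-1,1) \times D^{n-\lambda_i})$, with $\sigma$ exchanging the two copies and fixing the disc and sphere coordinates. By the split case $V$ and $U \cap V$ are RFM; the Mayer--Vietoris diagram of this section (using Proposition \ref{prop:mv}) and the five lemma then show that $B_i$ is RFM whenever $B_{i-1}$ is. Inducting on $i$ gives that $B = B_k$ is RFM.

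The main thing to verify carefully is the dictionary of the first paragraph --- that on $\mathbb{Z}_2 \times B_0$ every relevant structure (graded Real gerbes, the Poincar\'e property, the lifting gerbe $L(V_\rho)$, and twisted $KR$-theory) restricts to its ordinary complex analogue on $B_0$ and that $p^*$, $\gamma$ and $\widehat p_*$ are intertwined --- after which the conclusion is exactly the known complex statement. By contrast the equivariant handle induction is routine given the Mayer--Vietoris framework already established in this section.
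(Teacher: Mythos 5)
Your proof is correct and follows essentially the same strategy as the paper: first the ``split'' case $B \cong \mathbb{Z}_2 \times B_0$ with $\sigma$ swapping the two copies, handled by identifying all the Real data with ordinary complex data over $B_0$ and appealing to the known complex Fourier--Mukai theorem; then a reduction of the general free case to the split case via a Morse-theoretic handle induction and the Mayer--Vietoris argument of this section.

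There is one substantive difference worth pointing out, and it is in your favour. The paper works with a Morse function on the quotient $Q = B/\sigma$, takes preimages $B_i = \rho^{-1}(Q_i)$, and then worries that for a handle of index $\lambda_i = 2$ the restricted double cover $\widetilde{U}\cap\widetilde{V} \to U\cap V$ (with $U\cap V \simeq S^1\times\mathbb{R}^m$) could be the connected (non-trivial) double cover $\widetilde{S}^1\times\mathbb{R}^m$. To cover this possibility the paper proves separately, by a further Mayer--Vietoris argument, that $S^1$ with the free involution $t \mapsto t+\tfrac{1}{2}$ is RFM. You instead work with the pulled-back $\sigma$-invariant Morse function $f = \bar f\circ p$ on $B$ and observe that the handle $V$ in $B$ is a $\sigma$-pair $V_1\sqcup V_2$ with $\sigma(V_1)=V_2$, and that $\widetilde{U}\cap\widetilde{V} = (B_{i-1}\cap V_1)\sqcup(B_{i-1}\cap V_2)$. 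Equivalently, $\widetilde{U}\cap\widetilde{V} \to U_Q\cap V_Q$ is the restriction of the trivial double cover $V_1\sqcup V_2 \to V_Q$ (trivial because the handle $V_Q$ is contractible) to the open subset $U_Q\cap V_Q \subset V_Q$, and the restriction of a trivial cover to any open subset is trivial. So the non-trivial case the paper guards against cannot in fact arise, $\widetilde{U}\cap\widetilde{V}$ is always of the split form $\mathbb{Z}_2\times(U_Q\cap V_Q)$, and the paper's extra $\widetilde{S}^1$ step turns out to be dispensable. Apart from this genuine (if small) simplification, the two proofs coincide.
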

\begin{proof}
Set $Q = B/\langle \sigma \rangle$. Since $\sigma$ acts freely, the quotient map $\rho : B \to Q$ is an unbranched double cover. Suppose that $B \to Q$ is a trivial covering, so $B = Q \cup Q$ and $\sigma$ swaps the two copies. Let $X \to B$ be a Real affine torus bundle over $B$. Since the involution $\sigma_X$ on $X$ covers $\sigma$, we have that $X = X_Q \cup X_Q$ where $X_Q = X|_Q$ and $\sigma_X$ swaps the two copies. In this case the twisted $KR$-theory of $(X, \mathcal{G})$ is isomorphic to the twisted $K$-theory of $(X|_Q , \mathcal{G}|_{X_Q})$. One easily checks that in such a case Real $T$-duality of a pair $(X , \mathcal{G})$, $(\widehat{X} , \widehat{\mathcal{G}})$ on $B$ just reduces to ordinary $T$-duality of their restrictions over $Q$. Moreover the Real Fourier--Mukai transform just corresponds to the ordinary Fourier--Mukai transform in complex $K$-theory $K^*(X_Q , L(V_\rho)|_{Q} \otimes \mathcal{G}_Q) \to K^{*-m}( \widehat{X}_Q , \widehat{\mathcal{G}}|_{\widehat{X}_Q})$, which is already known to be an isomorphism.

In the more general case where the covering $\rho : B \to Q$ is non-trivial, we consider a Morse function $f : Q \to \mathbb{R}$ exactly as in the proof of Lemma \ref{lem:rfmtriv}. We get a sequence of open subsets $Q_0 , Q_1 , \dots , Q_k$, where $Q_0$ is empty, $Q = Q_k$ and $Q_i$ is obtained from $Q_{i-1}$ by attaching a handle of index $\lambda_i$. So $Q_i = U \cup V$ where $U = Q_{i-1}$, $V = D^{\lambda_i} \times D^{n - \lambda_i}$, $U \cap V \cong S^{\lambda_i - 1} \times (-1,1) \times D^{n-\lambda_i}$. We get a similar decomposition of $B$ by taking preimages $B_i = \rho^{-1}(Q_i)$. Thus $B_i = \widetilde{U} \cup \widetilde{V}$ where $\widetilde{U} = B_{i-1}$, $V = \rho^{-1}(V)$, $\widetilde{U} \cap \widetilde{V} = \rho^{-1}(U \cap V)$. Since $V$ is contractible, the double cover $\widetilde{V} \to V$ is trivial and so $V$ is RFM. If $\lambda_i \neq 2$, then $U \cap V$ is simply-connected, so the double covering $\widetilde{U} \cap \widetilde{V} \to U \cap V$ is trivial and $\widetilde{U} \cap \widetilde{V}$ is RFM. In the case $\lambda_i = 2$, if the double cover $\widetilde{U} \cap \widetilde{V} \to U \cap V$ is trivial, then $\widetilde{U} \cap \widetilde{V}$ is RFM. Suppose instead that the double covering is non-trivial. Since $U \cap V \cong \mathbb{R}^m \times S^1$ (where $m = dim(B) +1 - \lambda_i$), we have that $\widetilde{U} \cap \widetilde{V} \to U \cap V$ must be $\mathbb{R}^m \times \widetilde{S}^1$, where $\widetilde{S}^1 \to S^1$ is the unique non-trivial double covering of the circle. Using Lemma \ref{lem:thomrfm}, we will have that $\widetilde{U} \cap \widetilde{V}$ is RFM provided $\widetilde{S}^1$ is RFM. Assuming this for the time being, a Mayer--Vietoris argument will then imply that $B_i$ is RFM provided $B_{i-1}$ is RFM. Then induction on $i$ gives us that $B$ is RFM.

It remains to prove that $S^1$ equipped with a free involution is RFM. Wrrite $S^1 = \mathbb{R}/\mathbb{Z}$ with involution $\sigma(t) = t + 1/2$. Consider the open cover $U = (0,1/2) \cup (1/2 , 1)$, $V = (-1/4,1/4) \cup (1/4 , 3/4)$. Clearly $\sigma$ preserves $U$ and $V$ and permutes the two connected components of $U$ and $V$. Thus $U$ and $V$ are RFM. Furthermore $U \cap V = (-1/4 , 0) \cup (0,1/4) \cup (1/4 , 1/2) \cup (1/2 , 3/4)$ is made up of four components which are swapped by $\sigma$ in pairs. Thus $U \cap V$ is RFM. The Mayer--Vietoris argument then gives us that $(S^1 , \sigma)$ is RFM.
\end{proof}

\begin{lemma}\label{lem:free2}
Let $B$ be the interior of a compact smooth manifold $W$ with boundary. Let $\sigma_W$ be a smooth involution on $W$ and let $\sigma_B$ be the restriction of $\sigma_W$ to $B$. If $\sigma_W$ acts freely, then $(B , \sigma_B)$ is RFM.
\end{lemma}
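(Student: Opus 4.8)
The plan is to run essentially the same induction as in the proof of Lemma~\ref{lem:free}, the only new input being a handle decomposition of the \emph{open} manifold $B=\mathrm{int}(W)$ that is equivariant for the free involution. First I would pass to the quotient: since $\sigma_W$ acts freely, $Q=W/\langle\sigma_W\rangle$ is a compact smooth manifold with boundary, $\rho\colon W\to Q$ is an unbranched double cover, and $B=\rho^{-1}(\mathrm{int}(Q))$. I would then choose a Morse function $f\colon Q\to[0,1]$ adapted to the boundary, so that $f^{-1}(1)=\partial Q$, $f$ has no critical points on a collar of $\partial Q$, and the interior critical points $b_1,\dots,b_k$ have distinct critical values $c_1<\dots<c_k<1$; then $\mathrm{int}(Q)=f^{-1}[0,1)$. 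The pullback $\widetilde f=f\circ\rho\colon W\to[0,1]$ is a $\sigma_W$-invariant Morse function whose critical set is $\rho^{-1}\{b_1,\dots,b_k\}$, a union of $k$ free $\sigma_W$-orbits $\{\widetilde b_i,\sigma_W\widetilde b_i\}$. Fixing interleaving regular values $a_0<c_1<a_1<\dots<c_k<a_k=1$ with $a_0<\min f$ and setting $B_i=\widetilde f^{-1}[0,a_i)$, I obtain $\sigma_W$-invariant open submanifolds with $B_0=\varnothing$, $B_k=B$, and $B_i$ obtained from $B_{i-1}$ by attaching the two index-$\lambda_i$ handles located at $\widetilde b_i$ and $\sigma_W\widetilde b_i$.

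Next I would induct on $i$, using the Mayer--Vietoris argument from the beginning of Section~\ref{sec:redtor}. For each $i\ge1$ one writes $B_i=\widetilde U\cup\widetilde V$ with $\widetilde U\cong B_{i-1}$, $\widetilde V$ a neighbourhood of the two new handles, and $\widetilde U\cap\widetilde V$ a neighbourhood of the two attaching regions; since $\widetilde f$, a compatible metric and its gradient flow are all $\sigma_W$-invariant, this cover has the structure required by Proposition~\ref{prop:mv} (equivariant collars, compact separating hypersurface properly embedded in $B_i$), exactly as in the proof of Lemma~\ref{lem:free}. Because double covers of contractible or simply connected spaces are trivial, $\widetilde V$ has the form $Z\sqcup\sigma_W(Z)$ with $Z$ contractible, and $\widetilde U\cap\widetilde V$ is a disjoint union of pieces of the same form --- the sole exception being $\lambda_i=2$ with the restriction of the covering $B\to\mathrm{int}(Q)$ to the circle factor of $U\cap V\cong\mathbb R^{m}\times S^1$ non-trivial, in which case $\widetilde U\cap\widetilde V\cong\mathbb R^{m}\times\widetilde S^1$ is RFM by Lemma~\ref{lem:thomrfm} together with the fact that the circle with its free involution is RFM, established at the end of the proof of Lemma~\ref{lem:free}. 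On any piece of the form $Z\sqcup\sigma_W(Z)$ the twisted $KR$-theory, Real $T$-duality and the Real Fourier--Mukai transform all reduce to their complex counterparts (the ordinary complex Fourier--Mukai transform being an isomorphism), exactly as in Lemma~\ref{lem:free}; hence $\widetilde V$ and $\widetilde U\cap\widetilde V$ are RFM. The five-lemma applied to the Mayer--Vietoris ladder then shows $B_i$ is RFM whenever $B_{i-1}$ is, and induction from $B_0=\varnothing$ gives that $B=B_k$ is RFM.

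I expect the main obstacle to be the Morse-theoretic bookkeeping in the open, non-compact setting: producing the boundary-adapted Morse function on $Q$, checking that the induced exhaustion of $B$ by the $B_i$ is genuinely $\sigma_W$-equivariant, and verifying at each stage that the two-set open cover of $B_i$ really satisfies the hypotheses of Proposition~\ref{prop:mv} --- in particular that the separating hypersurface is compact and properly embedded and that the collar can be chosen $\sigma_W$-equivariant --- so that the opening argument of Section~\ref{sec:redtor} applies without change. Everything downstream of that, namely the reduction to complex $K$-theory on the swapped pieces and the treatment of the $\widetilde S^1$ case, is imported directly from Lemma~\ref{lem:free} and Lemma~\ref{lem:thomrfm}.
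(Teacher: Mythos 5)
Your proposal is correct, but it takes a genuinely different route from the paper. The paper avoids boundary-adapted Morse theory entirely by doubling: it forms the closed manifold $D_W = W \cup_{\partial W} W$ with its free involution, extends the torus bundle, gerbe and $T$-dual data from $B$ to $W$ and then doubles them over $D_W$; since $D_W$ and $\partial W$ are compact and $\sigma$-free, Lemma~\ref{lem:free} applies to both, and a single Mayer--Vietoris argument for $D_W = W \cup_{\partial W} W$ yields the result for $B$. You instead re-run the handle-by-handle induction of Lemma~\ref{lem:free} directly on the open manifold $B = \mathrm{int}(W)$, using a boundary-adapted Morse function on $Q = W/\langle\sigma\rangle$. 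Both strategies work. The paper's is more economical in that it reuses Lemma~\ref{lem:free} as a black box and never re-enters the Morse-theoretic machinery, at the cost of having to argue (somewhat briskly) that the torus-bundle, gerbe and $T$-dual data all extend from $B$ across the double. Your route sidesteps any extension issue, but must justify the boundary-adapted exhaustion, in particular the final stage $a_k = 1$: there $B_k = \widetilde f^{-1}[0,1)$ differs from a genuine open sublevel set carrying the last pair of handles by an open collar of $\partial W$, and an equivariant gradient-flow isotopy is needed to identify $B_k$ with $\widetilde f^{-1}[0,b)$ for a regular $b \in (c_k,1)$ before the handle-attachment picture applies. Apart from this point, the bookkeeping you flag (equivariant collars, properness of the separating hypersurface, the $\widetilde S^1$ case) is identical to what already appears in the proofs of Lemmas~\ref{lem:rfmtriv} and~\ref{lem:free}.
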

\begin{proof}
Let $Y$ be the boundary of $W$. Note that $Y$ is compact and that $\sigma_W$ acts freely on $Y$. Let $D_W = W \cup_Y W$ be the double of $W$. That is, $D_W$ is the compact manifold obtained by taking two copies of $W$ and identifying their boundaries. The involution $\sigma_W$ extends to $D_W$ in the obvious way and the resulting involution $\sigma_{D_W}$ is free. Let $X \to B$ be a Real affine torus bundle over $B$. By the classification of Real affine torus bundles, it is clear that such a bundle extends to a Real affine torus bundle $X_W \to W$. Furthermore, $X_W$ extends to $D_W$ by taking two copies of $X_W$ and identifying them over $Y$. A similar doubling procedure works for any Real affine gerbe $\mathcal{G}$ on $X$. Thus any pair $(X , \mathcal{G})$ can be doubled to produce a pair $(D_X , D\mathcal{G})$ on $W$. If $(X , \mathcal{G})$ and $(\widehat{X} , \widehat{\mathcal{G}})$ are Real $T$-duals over $B$, then it is easily seen that their doubles $(D_X , D\mathcal{G})$, $(D_{\widehat{X}} , D\widehat{\mathcal{G}})$ are Real $T$-duals over $W$. Indeed given an isomorphism $\gamma : p^*(\mathcal{G}) \to \widehat{p}(\widehat{\mathcal{G}})$ satisfying the Poincar\'e property, we can construct a doubled isomorphism $D_\gamma$ which also satisfies the Poincar\'e property. By Lemma \ref{lem:free}, $W$ and $Y$ are RFM, so $\Phi(D_\gamma)$ and $\Phi(\gamma|_Y)$ are isomorphism. Now we appy a Mayer--Vietoris argument to the decomposition $D_W = W \cup_Y W$ to see that $\Phi(\gamma)$ is an isomorphism. Hence $B$ is RFM.
\end{proof}

\begin{lemma}\label{lem:trivrfm}
Let $B$ be a compact, smooth manifold and $\sigma$ a smooth involution. If the fixed point set $B^\sigma$ is RFM (with trivial involution), then $(B , \sigma)$ is RFM.
\end{lemma}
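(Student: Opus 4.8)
The plan is to glue together two regions by means of the Mayer--Vietoris reduction set up at the beginning of this section: a $\sigma$-invariant tubular neighbourhood of the fixed set $B^\sigma$, handled by Lemma \ref{lem:thomrfm}, and the complement of $B^\sigma$ together with the overlap, handled by Lemma \ref{lem:free2} since $\sigma$ is free away from its fixed set. Throughout I use that RFM is invariant under $\sigma$-equivariant diffeomorphism, which is immediate from the definitions.

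If $B^\sigma = \emptyset$ then $\sigma$ is free and $(B,\sigma)$ is RFM by Lemma \ref{lem:free}, so assume $B^\sigma \neq \emptyset$. Averaging a Riemannian metric makes $\sigma$ an isometry, and the exponential map of such a metric gives a $\sigma$-equivariant diffeomorphism from a $\sigma$-invariant open tubular neighbourhood $\nu$ of $N := B^\sigma$ onto the total space of the normal bundle $\pi_\nu : \nu(N) \to N$. Since $N$ is exactly the fixed set, the linear involution induced by $\sigma$ on each normal fibre has no nonzero fixed vector, hence equals $-\mathrm{id}$; in particular $\sigma$ preserves the radial coordinate and acts freely on $\nu \setminus N$. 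Fix $\epsilon > 0$ and let $D, S$ denote the closed disc bundle and sphere bundle of radius $\epsilon$ in $\nu(N)$; put $Y := S$, $B_- := D$, $B_+ := B \setminus D^\circ$ (so $B_+$ is compact with boundary $Y$). The decomposition $B = B_- \cup_Y B_+$ meets the hypotheses of the Mayer--Vietoris reduction of this section: $Y$ is compact (hence $Y \to B$ proper), the radial coordinate provides the collars, and on them $\sigma$ has the product form $\sigma(t,y) = (t,\sigma_Y(y))$ because $\sigma$ fixes the radial coordinate. Writing $U, V, U \cap V$ for the open sets of that reduction, one gets $\sigma$-equivariant diffeomorphisms $U \cong \nu(N)$, $U \cap V \cong (-1,1) \times Y$, and $V \cong \mathrm{int}(B_+) \cong B \setminus N$.

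Next I would check the three pieces are RFM. First, $U \cong \nu(N)$ is a Real vector bundle over the compact manifold $N = B^\sigma$ with the trivial involution, and $N$ is RFM by hypothesis, so $U$ is RFM by Lemma \ref{lem:thomrfm}. Second, $U \cap V \cong (-1,1) \times Y$ with $\sigma$ acting by $\mathrm{id} \times \sigma_Y$ is the interior of the compact manifold with boundary $[-1,1] \times Y$, on which the involution is free since $\sigma_Y$ is free on the sphere bundle $Y$; so $U \cap V$ is RFM by Lemma \ref{lem:free2}. Third, $V \cong \mathrm{int}(B_+)$ is the interior of the compact manifold with boundary $B_+$, on which $\sigma$ acts freely because $B_+ \cap N = \emptyset$; so $V$ is RFM by Lemma \ref{lem:free2} again.

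Finally, having shown $U$, $V$ and $U \cap V$ are RFM, the morphism of Mayer--Vietoris sequences of Proposition \ref{prop:mv} intertwining $\Phi|_U$, $\Phi|_V$, $\Phi|_{U \cap V}$ and $\Phi$ --- exactly as in the argument at the start of this section --- combined with the five-lemma, shows that $\Phi$ is an isomorphism for every Real $T$-dual tuple over $B$; hence $(B,\sigma)$ is RFM. I expect the main point requiring care to be the bookkeeping of the collars and the product form of $\sigma$ needed to legitimately apply Proposition \ref{prop:mv}; the existence of a $\sigma$-invariant tubular neighbourhood is standard, and once the geometry is arranged the conclusion is a formal consequence of Lemmas \ref{lem:thomrfm} and \ref{lem:free2}.
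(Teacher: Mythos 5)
Your proof is correct and follows essentially the same route as the paper: the cover is by a tubular neighbourhood of $B^\sigma$ (RFM via Lemma~\ref{lem:thomrfm}) and its complement and overlap (RFM via Lemma~\ref{lem:free2}, since $\sigma$ is free there), finished by the Mayer--Vietoris argument. You merely spell out the collar/decomposition bookkeeping and the trivial $B^\sigma = \emptyset$ case, which the paper leaves implicit.
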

\begin{proof}
Consider the covering $U = B \setminus B^{\sigma}$, $V = \nu B^\sigma$, a tubular neighbourhood of $B^\sigma$. Since $\sigma$ acts freely on $U$ and $U \cap V$, we have that $U$ and $U \cap V$ are RFM, by Lemma \ref{lem:free2}. Further, $V$ is the total space of a Real vector bundle over $B^\sigma$. So if $B^\sigma$ is RFM, then so is $V$ by Lemma \ref{lem:thomrfm}. By the Mayer--Vietoris argument, if $U,V$ and $U \cap V$ are RFM, then so is $B$.
\end{proof}

Putting all of the above results together, we have:

\begin{lemma}\label{lem:redpt}
Suppose that $\{ pt \}$ is RFM. Then $(B , \sigma)$ is RFM for any compact smooth manifold $B$ and any smooth involution $\sigma$.
\end{lemma}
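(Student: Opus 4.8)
The plan is to obtain Lemma~\ref{lem:redpt} as a formal consequence of the two reduction results Lemma~\ref{lem:rfmtriv} and Lemma~\ref{lem:trivrfm} already established above. First I would recall the standard fact that the fixed point set $B^\sigma$ of a smooth involution $\sigma$ on a compact smooth manifold $B$ is itself a compact smooth manifold: near a fixed point one averages a Riemannian metric to make $\sigma$ an isometry, and then $B^\sigma$ is locally the fixed subspace of the linear isometry $d\sigma$, so it is a smooth closed submanifold (possibly a disjoint union of components of different dimensions). By construction the involution induced on $B^\sigma$ is the identity, i.e. the trivial involution.

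Given the hypothesis that $\{pt\}$ is RFM, Lemma~\ref{lem:rfmtriv} applies to $B^\sigma$ equipped with its trivial involution and shows that $B^\sigma$ is RFM. If $B^\sigma$ has components of varying dimension one simply applies Lemma~\ref{lem:rfmtriv} to each component separately, using that the RFM property is stable under disjoint unions (a Real $T$-dual tuple over a disjoint union restricts to such tuples over the pieces, and the Fourier--Mukai transform, being built from pullback, the isomorphism $\gamma$, and fibrewise pushforward, splits accordingly). The degenerate case $B^\sigma = \emptyset$ is already covered, since then $\sigma$ acts freely and Lemma~\ref{lem:free} gives that $(B,\sigma)$ is RFM directly. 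Finally, Lemma~\ref{lem:trivrfm} states precisely that if $B^\sigma$ is RFM (with trivial involution) then $(B,\sigma)$ is RFM, which is the assertion of Lemma~\ref{lem:redpt}.

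I do not expect a genuine obstacle at this stage: all of the analytic and $K$-theoretic content — the Mayer--Vietoris compatibility of Proposition~\ref{prop:mv}, the Thom-isomorphism reduction of Lemma~\ref{lem:thomrfm}, the free-quotient arguments of Lemmas~\ref{lem:free} and \ref{lem:free2}, and the handle-decomposition induction behind Lemma~\ref{lem:rfmtriv} — has been absorbed into the preceding lemmas. The only points meriting a sentence of justification in the write-up are the smoothness and compactness of $B^\sigma$ and the insensitivity of RFM to disjoint unions; beyond that the proof is a two-step citation of Lemma~\ref{lem:rfmtriv} followed by Lemma~\ref{lem:trivrfm}.
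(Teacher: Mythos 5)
Your proof is correct and is essentially the paper's own argument: the paper proves Lemma~\ref{lem:redpt} in two lines by first invoking Lemma~\ref{lem:trivrfm} to reduce to the case of trivial involution and then citing Lemma~\ref{lem:rfmtriv}. You supply the same chain, merely making explicit the points the paper leaves implicit (smoothness and compactness of $B^\sigma$, insensitivity of RFM to disjoint unions and to components of varying dimension, and the degenerate case $B^\sigma=\emptyset$, which is in fact already absorbed into Lemma~\ref{lem:trivrfm} via its tubular-neighbourhood cover rather than needing a separate appeal to Lemma~\ref{lem:free}).
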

\begin{proof}
By Lemma \ref{lem:trivrfm}, it suffices to prove the result in the case that $\sigma$ acts trivially. Then the result follows from Lemma \ref{lem:rfmtriv}.
\end{proof}

\subsection{Torus case}\label{sec:gentor}

By the results of Section \ref{sec:redtor} we are reduced to establishing the Real Fourier--Mukai transform is an isomorphism in the case that $B = \{pt\}$. Thus $X,\widehat{X}$ are tori. In this section we will reduce the problem of establishing that the Real Fourier--Mukai transform is an isomorphism for arbitrary tori down to the case of proving it for indecomposable tori, meaning that $(X,\sigma)$ can not be expressed as a product of lower dimensional tori. As in Section \ref{sec:rtdpt}, it suffices to restrict attention to the case that all gerbes have a trivial grading.

Let $(X , \sigma)$ be a Real affine torus. Suppose that $(X , \sigma)$ is decomposable into a product $(X , \sigma) = (X_1 , \sigma_1) \times (X_2 , \sigma_2)$. If we write $X = V/\Lambda$ then the decomposition $X = X_1 \times X_2$ corresponds to a decomposition $\Lambda = \Lambda_1 \oplus \Lambda_2$ preserved by $\sigma$ and a corresponding decomposition $V = V_1 \oplus V_2$. 

\begin{lemma}\label{lem:grbdec}
If $(X,\sigma)$ decomposes into a product $(X_1 , \sigma_1) \times (X_2 , \sigma_2)$, then any graded Real affine gerbe $\mathcal{G}$ on $X$ factors into a product $\mathcal{G} \cong \pi_1^*(\mathcal{G}_1) \otimes \pi_2^*(\mathcal{G}_2)$ of graded Real affine gerbes on $X_1,X_2$ ($\pi_1,\pi_2$ are the projections to $X_1,X_2$).
\end{lemma}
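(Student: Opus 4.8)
The plan is to remain in the setting of this section, where the base is a point, and to deduce the factorisation from a short exact sequence of coefficient $\mathbb{Z}_2$-modules. By the reduction recalled at the start of the section it is enough to handle the grading by hand: the grading class of $\mathcal{G}$ is pulled back from a class $e \in H^1_{\mathbb{Z}_2}(pt ; \mathbb{Z}_2)$, so I would take $\mathcal{G}_1$ to carry this grading and $\mathcal{G}_2$ to be trivially graded; the correction term $\delta(e \smallsmile e')$ in the graded tensor product then vanishes, and it remains only to match the ungraded Dixmier--Douady data. So from now on all gerbes are ungraded, and a Real affine gerbe on $X$ is represented by an equivariant affine \v{C}ech cocycle valued in $\mathcal{A}(X)_-$, its isomorphism class being recorded by a class in $H^2_{\mathbb{Z}_2}(pt ; \mathcal{A}(X)_-)$.

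The key observation is the following. Writing $X = V/\Lambda$ with $\Lambda = \Lambda_1 \oplus \Lambda_2$ and $V = V_1 \oplus V_2$ as in the statement, an affine $S^1$-valued function on the product torus $X = X_1 \times X_2$ has a linear part that splits uniquely as $\ell_1 \oplus \ell_2$ with $\ell_i \in \Lambda_i^*$, together with a single constant term; hence the homomorphism
\[
m : \mathcal{A}(X_1)_- \oplus \mathcal{A}(X_2)_- \to \mathcal{A}(X)_-, \qquad m(f_1 , f_2) = \pi_1^*(f_1)\pi_2^*(f_2)
\]
is surjective with kernel the anti-diagonally embedded copy of $S^1_-$ in the subgroup of constants. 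A direct check shows that $m$ is $\mathbb{Z}_2$-equivariant (this uses that $\sigma$ preserves the splitting $\Lambda = \Lambda_1 \oplus \Lambda_2$), so we obtain a short exact sequence of $\mathbb{Z}_2$-modules
\[
0 \to S^1_- \to \mathcal{A}(X_1)_- \oplus \mathcal{A}(X_2)_- \buildrel m \over \longrightarrow \mathcal{A}(X)_- \to 0.
\]

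Next I would pass to the long exact sequence in $\mathbb{Z}_2$-equivariant cohomology of the point, whose relevant portion is
\[
H^2_{\mathbb{Z}_2}(pt ; \mathcal{A}(X_1)_-) \oplus H^2_{\mathbb{Z}_2}(pt ; \mathcal{A}(X_2)_-) \buildrel m_* \over \longrightarrow H^2_{\mathbb{Z}_2}(pt ; \mathcal{A}(X)_-) \to H^3_{\mathbb{Z}_2}(pt ; S^1_-).
\]
Since $H^3_{\mathbb{Z}_2}(pt ; S^1_-) \cong H^4_{\mathbb{Z}_2}(pt ; \mathbb{Z}_-) = 0$, as recalled in Section \ref{sec:rtdpt}, the map $m_*$ is surjective. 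Concretely this means that any affine Real gerbe $\mathcal{G}$ on $X$ can, after altering its representing affine \v{C}ech cocycle by a coboundary, be represented by a cocycle of the form $\pi_1^*(\theta^{(1)})\pi_2^*(\theta^{(2)})$ with each $\theta^{(i)}$ an affine Real \v{C}ech cocycle on $X_i$. Letting $\mathcal{G}_i$ be the affine Real gerbe on $X_i$ defined by $\theta^{(i)}$ (with $\mathcal{G}_1$ additionally carrying the grading of $\mathcal{G}$), and using that the tensor product of bundle gerbes is represented by the product of the defining cocycles (the graded-tensor sign being trivial since $\mathcal{G}_2$ is ungraded), one concludes $\mathcal{G} \cong \pi_1^*(\mathcal{G}_1) \otimes \pi_2^*(\mathcal{G}_2)$.

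The only substantive ingredients are the exactness and $\mathbb{Z}_2$-equivariance of the displayed short exact sequence and the vanishing $H^3_{\mathbb{Z}_2}(pt ; S^1_-) = 0$; the remaining steps are routine \v{C}ech bookkeeping, so I do not anticipate a serious obstacle. It is worth noting, however, that the argument genuinely depends on the base being a point: over a general base $B$ the analogous sequence has $\mathcal{C}_B(S^1)_-$ in place of $S^1_-$, and the obstruction group $H^3_{\mathbb{Z}_2}(B ; \mathcal{C}_B(S^1)_-) \cong H^4_{\mathbb{Z}_2}(B ; \mathbb{Z}_-)$ need not vanish, so no such factorisation need exist in general.
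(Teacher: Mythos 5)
Your proof is correct and takes essentially the same approach as the paper: both arguments rest on the short exact sequence $0 \to S^1_- \to \mathcal{A}(X_1)_- \oplus \mathcal{A}(X_2)_- \to \mathcal{A}(X)_- \to 0$ of $\mathbb{Z}_2$-modules and the vanishing of $H^3_{\mathbb{Z}_2}(pt ; S^1_-) \cong H^4_{\mathbb{Z}_2}(pt ; \mathbb{Z}_-)$, which gives surjectivity of the map on $H^2_{\mathbb{Z}_2}$. The explicit handling of the grading is a harmless addition; the paper sidesteps it by restricting to trivially graded gerbes at the start of the section.
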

\begin{proof}
Let $\mathcal{A}(X)$ denote the $\mathbb{Z}_2$-module given by the space of affine $S^1$-valued functions on $X$. Then graded Real affine gerbes are given by classes in $H^2_{\mathbb{Z}_2}(pt ; \mathcal{A}(X)_-)$. Similarly we have the $\mathbb{Z}_2$-modules $\mathcal{A}(X_1)$, $\mathcal{A}(X_2)$. Any affine function on $X = X_1 \times X_2$ can be written as a product of an affine function on $X_1$ and an affine function on $X_2$. This gives rise to a short exact sequence
\[
0 \to S^1_- \to \mathcal{A}(X_1)_- \oplus \mathcal{A}(X_2)_- \to \mathcal{A}(X)_-
\]
and a corresponding long exact sequence
\[
\cdots \to H^2_{\mathbb{Z}_2}(pt ; S^1_-) \to \begin{matrix}H^2_{\mathbb{Z}_2}(pt ; \mathcal{A}(X_1)_-) \\ \oplus \\ H^2_{\mathbb{Z}_2}(pt ; \mathcal{A}(X_2)_-) \end{matrix} \to H^2_{\mathbb{Z}_2}(pt ; \mathcal{A}(X)_-) \to H^3_{\mathbb{Z}_2}(pt ; S^1_-) \to \cdots
\]
We have $H^3_{\mathbb{Z}_2}(pt ; S^1_-) \cong H^4_{\mathbb{Z}_2}(pt ; \mathbb{Z}_-) = 0$, so the map $H^2_{\mathbb{Z}_2}(pt ; \mathcal{A}(X_1)_-) \oplus H^2_{\mathbb{Z}_2}(pt ; \mathcal{A}(X_2)_-)\to H^2_{\mathbb{Z}_2}(pt ; \mathcal{A}(X)_-)$ is surjective.
\end{proof}

\begin{proposition}\label{prop:tdfac}
Suppose that $(X , \mathcal{G})$ and $(\widehat{X} , \widehat{\mathcal{G}})$ are Real $T$-dual pairs and that $\gamma : p^*(\mathcal{G}) \to \widehat{p}^*(\widehat{\mathcal{G}})$ is an isomorphism satisfying the Poincar\'e property. Suppose that $(X , \sigma_X)$ factors into a product $(X , \sigma_X) = (X_1 , \sigma_{X_1}) \times (X_2 , \sigma_{X_2})$. Then there exists:
\begin{itemize}
\item[(1)]{A factorisation $(\widehat{X} , \sigma_{\widehat{X}}) \cong (\widehat{X}_1 , \sigma_{\widehat{X}_1}) \times (\widehat{X}_2 , \sigma_{\widehat{X}_2})$}
\item[(2)]{Real (trivially graded) affine gerbes $\mathcal{G}_1, \mathcal{G}_2, \widehat{\mathcal{G}}_1, \widehat{\mathcal{G}}_2$ on $X_1,X_2,\widehat{X}_1,\widehat{X}_2$ such that $\mathcal{G} \cong \mathcal{G}_1 \otimes \mathcal{G}_2$, $\widehat{\mathcal{G}} \cong \widehat{\mathcal{G}}_1 \otimes \widehat{\mathcal{G}}_2$}
\item[(3)]{Isomorphisms $\gamma_i : p_i^*(\mathcal{G}_i) \to \widehat{p}_i^*(\widehat{\mathcal{G}}_i)$ on $X_i \times \widehat{X}_i$ satisfying the Poincar\'e property and such that the isomorphisms $\gamma$ and $\gamma_1 \otimes \gamma_2$ differ by the pullback of Real line bundles on $X$ and $\widehat{X}$.}
\end{itemize}

\end{proposition}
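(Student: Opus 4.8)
The plan is to transport the product structure on $X$ across the $T$-duality, split the two gerbes using the cohomological decompositions already established, and then compare $\gamma$ with the external tensor product of the factor isomorphisms. Throughout, all gerbes are trivially graded, as in the preceding subsection.

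First I would decompose $\widehat{X}$. Writing $X = V/\Lambda$, the factorisation $(X,\sigma_X) = (X_1,\sigma_{X_1}) \times (X_2,\sigma_{X_2})$ corresponds to a $\sigma$-invariant splitting $\Lambda = \Lambda_1 \oplus \Lambda_2$. The Poincar\'e property for $\gamma$ gives an isomorphism $\delta = \delta(\gamma) : \widehat{\Lambda} \to (\Lambda^*)_-$ of $\mathbb{Z}_2$-modules, so setting $\widehat{\Lambda}_i = \delta^{-1}(\Lambda_i^*)$ produces a $\sigma$-invariant splitting $\widehat{\Lambda} = \widehat{\Lambda}_1 \oplus \widehat{\Lambda}_2$ with respect to which $\delta = \delta_1 \oplus \delta_2$ is block diagonal, $\delta_i : \widehat{\Lambda}_i \xrightarrow{\sim} (\Lambda_i^*)_-$. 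Since $H^2_{\mathbb{Z}_2}(pt;\widehat{\Lambda}) = \bigoplus_i H^2_{\mathbb{Z}_2}(pt;\widehat{\Lambda}_i)$, the Chern class of $\widehat{X}$ decomposes as $\widehat{c} = (\widehat{c}_1,\widehat{c}_2)$, and therefore $\widehat{X} = X(\widehat{\Lambda},\widehat{c}) \cong X(\widehat{\Lambda}_1,\widehat{c}_1) \times X(\widehat{\Lambda}_2,\widehat{c}_2) =: \widehat{X}_1 \times \widehat{X}_2$, which is assertion (1).

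Next, by Lemma~\ref{lem:grbdec} we may write $\mathcal{G} \cong \pi_1^*\mathcal{G}_1 \otimes \pi_2^*\mathcal{G}_2$ for Real affine gerbes $\mathcal{G}_i$ on $X_i$; comparing the $\Lambda_i^*$-components in $\lambda(\mathcal{G}) = \delta\widehat{c} = (\delta_1\widehat{c}_1,\delta_2\widehat{c}_2)$ (Proposition~\ref{prop:tdualc}) yields $\lambda(\mathcal{G}_i) = \delta_i\widehat{c}_i$. Applying Proposition~\ref{prop:tdpt} to the factor $i$ with the isomorphism $\delta_i$ then produces a Real affine gerbe $\widehat{\mathcal{G}}_i$ on $\widehat{X}_i$ and an isomorphism $\gamma_i : p_i^*\mathcal{G}_i \to \widehat{p}_i^*\widehat{\mathcal{G}}_i$ with $\delta(\gamma_i) = \delta_i$. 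Using $C = X \times_B \widehat{X} \cong (X_1 \times \widehat{X}_1) \times (X_2 \times \widehat{X}_2)$ one checks that $\gamma_1 \otimes \gamma_2$ is an isomorphism $p^*(\mathcal{G}_1 \otimes \mathcal{G}_2) \to \widehat{p}^*(\widehat{\mathcal{G}}_1 \otimes \widehat{\mathcal{G}}_2)$ with $\delta(\gamma_1 \otimes \gamma_2) = \delta_1 \oplus \delta_2 = \delta$, so it satisfies the Poincar\'e property and $(\widehat{X},\widehat{\mathcal{G}}_1 \otimes \widehat{\mathcal{G}}_2)$ is again a Real $T$-dual of $(X,\mathcal{G})$. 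Since $\lambda(\widehat{\mathcal{G}}_1 \otimes \widehat{\mathcal{G}}_2) = \delta_1^t c_1 + \delta_2^t c_2 = \delta^t c = \lambda(\widehat{\mathcal{G}})$, Proposition~\ref{prop:affgrb} gives $\widehat{\mathcal{G}} \cong \widehat{\mathcal{G}}_1 \otimes \widehat{\mathcal{G}}_2$ except possibly when $c \neq 0$ and $\widehat{c} = 0$, in which case the two can only differ by a point gerbe; but then some $c_i \neq 0$ while $\widehat{c}_i = 0$, so by Proposition~\ref{prop:tdpt}(2) (re-running the construction of Theorem~\ref{thm:rtd} to retain $\delta(\gamma_i) = \delta_i$) we may absorb the point gerbe into $\widehat{\mathcal{G}}_i$ and re-choose $\gamma_i$. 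After this adjustment $\widehat{\mathcal{G}} \cong \widehat{\mathcal{G}}_1 \otimes \widehat{\mathcal{G}}_2$, which is assertion (2).

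Finally, for (3), $\gamma$ and $\gamma_1 \otimes \gamma_2$ are now both isomorphisms $p^*\mathcal{G} \to \widehat{p}^*\widehat{\mathcal{G}}$ satisfying the Poincar\'e property with $\delta(\gamma) = \delta = \delta(\gamma_1 \otimes \gamma_2)$; since the set of such isomorphisms is a torsor over $GrPic_R(C)$, they differ by a graded Real line bundle $N$ on $C$ whose image in $GrPic(C)/(p^*GrPic(X) + \widehat{p}^*GrPic(\widehat{X}))$ is zero. The main obstacle is the last step: promoting this to the Real statement $N \cong p^*A \otimes \widehat{p}^*\widehat{A}$ with $A,\widehat{A}$ graded Real line bundles on $X$ and $\widehat{X}$. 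This is the Real analogue of the uniqueness part of \cite[Proposition~3.9]{bar2}, and I would deduce it from an equivariant K\"unneth computation of $GrPic_R(X \times \widehat{X}) \cong H^0_{\mathbb{Z}_2}(C;\mathbb{Z}_2) \oplus H^2_{\mathbb{Z}_2}(C;\mathbb{Z}_-)$: since $X$ and $\widehat{X}$ are products of the indecomposable Real affine tori of Example~\ref{ex:Bpt}, $H^2_{\mathbb{Z}_2}(C;\mathbb{Z}_-)$ decomposes into the summands pulled back from $X$ and from $\widehat{X}$ together with a ``mixed'' summand onto which $\delta$ restricts isomorphically, so the vanishing of the mixed component of $N$ forces $N$ to be a product of pullbacks of Real line bundles.
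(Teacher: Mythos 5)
Your argument follows the paper's proof almost line by line: decompose $\widehat{\Lambda}$ via $\delta^{-1}$, split $\mathcal{G}$ using Lemma~\ref{lem:grbdec}, apply Proposition~\ref{prop:tdpt} to each factor to get $\widehat{\mathcal{G}}_i$ and $\gamma_i$, check $\delta(\gamma_1\otimes\gamma_2)=\delta$, and absorb the potential point-gerbe discrepancy into a factor where $c_i\neq 0$, $\widehat{c}_i=0$ exactly as the paper does. The only place you diverge is at the end, and there you have put your finger on a real terseness in the paper: the paper concludes $(3)$ directly from ``$\delta(\gamma)=\delta(\gamma_1\otimes\gamma_2)$'' with no further comment, but as you observe this equality only says that the difference class $[N]\in GrPic_R(C)$ has vanishing image in the non-equivariant quotient $GrPic(C)/(p^*GrPic(X)+\widehat{p}^*GrPic(\widehat{X}))$, whereas what is wanted is the stronger equivariant splitting $[N]=p^*[A]+\widehat{p}^*[\widehat{A}]$ with $A,\widehat{A}$ genuinely Real line bundles. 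Your proposed route — an equivariant K\"unneth-type analysis of $H^2_{\mathbb{Z}_2}(X\times\widehat{X};\mathbb{Z}_-)$ showing it splits into pullback pieces plus a mixed piece on which $\delta$ is an isomorphism — is a sensible way to close this, and reducing to the indecomposable tori of Example~\ref{ex:Bpt} is legitimate here since that decomposition is purely module-theoretic and does not presuppose the reduction the proposition is being used to establish. You have not, however, actually carried out that K\"unneth computation, so as written your proof still has the same unfilled step as the paper; if you want a complete argument you should either verify the K\"unneth splitting case by case over the five indecomposable types, or (when $\sigma_X$ and $\sigma_{\widehat{X}}$ have fixed points) argue more directly by restricting $N$ to $X\times\{\widehat{x}_0\}$ and $\{x_0\}\times\widehat{X}$ at equivariant basepoints and comparing.
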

\begin{proof}
Write $X = V/\Lambda$, $\widehat{X} = V^{\vee}/\Lambda^{\vee}$. Set $\delta = \delta(\gamma) : \Lambda^{\vee} \to \Lambda^*_-$. Note that $\delta$ is an isomorphism of $\mathbb{Z}_2$-modules. The factorisation $X = X_1 \times X_2$ corresponds to a $\sigma$-invariant deomposition $\Lambda = \Lambda_1 \oplus \Lambda_2$. This gives rise to a corresponding decomposition $\Lambda^*_- = (\Lambda_1^*)_- \oplus (\Lambda_2^*)_-$. Set $\Lambda^{\vee}_i = \delta^{-1}( (\Lambda_i^*)_-)$. Then we have a $\mathbb{Z}_2$-invariant decomposition $\Lambda^{\vee} = \Lambda^{\vee}_1 \oplus \Lambda^{\vee}_2$ with the property that $\delta$ sends $\Lambda^{\vee}_i$ to $(\Lambda_i)^*_-$. Corresponding to this decomposition of $\Lambda^{\vee}$ we get a decomposition $\widehat{X} = \widehat{X}_1 \times \widehat{X}_2$. Let $\delta_i : \Lambda^{\vee}_i \to (\Lambda^*_i)_-$ be the restriction of $\delta$ to $\Lambda^{\vee}_i$.

Choose a factorisation $\mathcal{G} \cong \mathcal{G}_1 \otimes \mathcal{G}_2$ as in Lemma \ref{lem:grbdec}. From Proposition \ref{prop:tdpt}, there exists Real affine gerbes $\widehat{\mathcal{G}}_1$, $\widehat{\mathcal{G}}_2$ on $\widehat{X}_1, \widehat{X}_2$ and isomorphisms $\gamma_i : p_i^*(\mathcal{G}_i) \to \widehat{p}_i^*(\widehat{\mathcal{G}}_i)$ on $X_i \times \widehat{X}_i$ on $X_i \times \widehat{X}_i$ satisfying $\delta(\gamma_i) = \delta_i$. In particular, $\gamma_i$ satisfies the Poincar\'e property. Tensoring these isomorphisms together we get an isomorphism $\gamma_1 \otimes \gamma_2 : p^*(\mathcal{G}) \to \widehat{p}^*( \widehat{\mathcal{G}}_1 \widehat{\mathcal{G}}_2 )$ such that $\delta(\gamma_1 \otimes \gamma_2) = \delta_1 \oplus \delta_2 = \delta$. Thus $(X , \mathcal{G})$ and $(\widehat{X} , \widehat{\mathcal{G}}_1 \widehat{\mathcal{G}}_2)$ are $T$-dual pairs. From Proposition \ref{prop:tdpt}, we either have $\widehat{\mathcal{G}} \cong \widehat{\mathcal{G}}_1 \otimes \widehat{\mathcal{G}}_2$, or $\widehat{\mathcal{G}}$ and $\widehat{\mathcal{G}}_1 \otimes \widehat{\mathcal{G}}_2$ differ by a point gerbe which we denote by $\mathcal{H}$. In the latter case it follows from Proposition \ref{prop:tdpt} that $\widehat{c} = 0$ and $c \neq 0$. The vanishing of $\widehat{c}$ means that the Chern classes $\widehat{c}_1, \widehat{c}_2$ of $\widehat{X}_1, \widehat{X}_2$ both vanish. On the other hand, the non-vanishing of $c$ implies that one of the Chern classes $c_1, c_2$ of $X_1, X_2$ is non-vanishing. Without loss of generality, assume $c_1 \neq 0$. Applying Proposition \ref{prop:tdpt} to $(X_1 , \mathcal{G}_1)$, we see that $(\widehat{X}_1 , \widehat{\mathcal{G}}_1 \otimes \mathcal{H})$ is also $T$-dual to $(X , \mathcal{G}_1)$. So, after replacing $\widehat{\mathcal{G}}_1$ by $\widehat{\mathcal{G}}_1 \otimes \mathcal{H}$, we can assume that $\widehat{\mathcal{G}} \cong \widehat{\mathcal{G}}_1 \otimes \widehat{\mathcal{G}}_2$.

To finish, we note that since $\delta( \gamma_1 \otimes \gamma_2) = \delta = \delta(\gamma)$, the isomorphisms $\gamma$ and $\gamma_1 \otimes \gamma_2$ must differ by the pullback of Real line bundles on $X$ and $\widehat{X}$.
\end{proof}

Let $(X, \mathcal{G}) , (\widehat{X} , \widehat{\mathcal{G}})$ and $\gamma$ be as in Proposition \ref{prop:tdfac}. Then associated to the isomorphisms $\gamma$ and $\gamma_1 \otimes \gamma_2$ are Fourier--Mukai transforms
\[
\Phi(\gamma), \Phi(\gamma_1 \otimes \gamma_2) : KR^*( X , L(V) \otimes \mathcal{G}) \to KR^{*-m}( \widehat{X} , \widehat{\mathcal{G}} )
\]
where $m = dim(X)$. Since, according to Proposition \ref{prop:tdfac}, $\gamma$ and $\gamma_1 \otimes \gamma_2$ differ by tensoring by line Real bundles $N,\widehat{N}$ on $X$ and $\widehat{X}$, it follows that we have an equality
\begin{equation}\label{equ:fmcomp}
\Phi(\gamma) = t_{\widehat{N}} \circ \Phi(\gamma_1 \otimes \gamma_2) \circ t_N
\end{equation}
where $t_N : KR^*(X , L(V) \otimes \mathcal{G}) \to KR^*(X , L(V) \otimes \mathcal{G})$ is the automorphism given by tensoring by $N$ and $t_{\widehat{N}}$ is similarly defined. Since $t_N$ and $t_{\widehat{N}}$ are isomorphisms, it follows that $\Phi(\gamma)$ is an isomorphism if and only if $\Phi(\gamma_1 \otimes \gamma_2)$ is an isomorphism.

Next we observe that $X = X_1 \times X_2$ and $X_1 \times \widehat{X}_2$ are Real affine torus bundle over $X_1$ with fibres $X_2, \widehat{X}_2$. Clearly $(X_1 \times X_2 , \mathcal{G}_1 \otimes \mathcal{G}_2)$ and $(X_1 \times \widehat{X}_2 , \mathcal{G}_1 \otimes \widehat{\mathcal{G}}_2)$ are Real $T$-duals via the isomorphism $\gamma_2$. Similarly $(X_1 \times \widehat{X}_2 , \mathcal{G}_1 \otimes \widehat{\mathcal{G}}_2)$ and $(\widehat{X}_1 \times \widehat{X}_2 , \widehat{\mathcal{G}}_1 \otimes \widehat{\mathcal{G}}_2)$ thought of as torus bundles over $\widehat{X}_2$ are Real $T$-dual via the isomorphism $\gamma_1$. Corresponding to the $T$-dual pairs are Fourier--Muakai transforms
\[
\Phi(\gamma_2) : KR^*( X_1 \times X_2 , L(V_1) \otimes L(V_2) \otimes \mathcal{G}_1 \otimes \mathcal{G}_2) \to KR^{*-m_2}(X_1 \times \widehat{X}_2 , L(V_1) \otimes \mathcal{G}_1 \otimes \widehat{\mathcal{G}}_2)
\]
and
\[
\Phi(\gamma_1) : KR^*( X_1 \times \widehat{X}_2 , L(V_1) \otimes \mathcal{G}_1 \otimes \widehat{\mathcal{G}}_2 ) \to KR^{*-m_1}(\widehat{X}_1 \times \widehat{X}_2 , \widehat{\mathcal{G}}_1 \otimes \widehat{\mathcal{G}}_2 )
\]
where $m_i = dim(X_i)$.

\begin{lemma}\label{lem:fmfactor}
We have $\Phi(\gamma_1 \otimes \gamma_2 ) = \Phi(\gamma_1 ) \circ \Phi(\gamma_2)$.
\end{lemma}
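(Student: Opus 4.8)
The plan is to realise the two-step transform as a composition of correspondences. Write $Y = X_1 \times \widehat{X}_2$, let $C_2 = X_1 \times X_2 \times \widehat{X}_2$ be the correspondence underlying $\Phi(\gamma_2)$, with projections $p^{(2)} : C_2 \to X = X_1\times X_2$ (forgetting $\widehat{X}_2$) and $\widehat{p}^{(2)} : C_2 \to Y$ (forgetting $X_2$), and let $C_1 = X_1 \times \widehat{X}_1 \times \widehat{X}_2$ be the correspondence underlying $\Phi(\gamma_1)$, with $p^{(1)} : C_1 \to Y$ (forgetting $\widehat{X}_1$) and $\widehat{p}^{(1)} : C_1 \to \widehat{X} = \widehat{X}_1\times\widehat{X}_2$ (forgetting $X_1$). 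The full correspondence $C = X \times \widehat{X} = X_1 \times X_2 \times \widehat{X}_1 \times \widehat{X}_2$ is then the fibre product $C_2 \times_Y C_1$, fitting into a pullback square with $q : C \to C_2$ (forgetting $\widehat{X}_1$) and $q' : C \to C_1$ (forgetting $X_2$); moreover $p^{(2)} \circ q = p$, $\widehat{p}^{(1)} \circ q' = \widehat{p}$ and $\widehat{p}^{(2)} \circ q = p^{(1)} \circ q'$, all three maps being the evident projections of $C$. Since $B$ is a point all of these spaces are compact, $\widehat{p}^{(2)}$ is a fibre bundle with compact fibre $X_2$, and $p^{(1)}$ is proper, so the base change formula (property (5) of Section \ref{sec:tkr}) applies to this square and gives $(p^{(1)})^* \circ \widehat{p}^{(2)}_* = q'_* \circ q^*$, the twists matching because $L(V) \cong L(V_1) \otimes L(V_2)$ (Section \ref{sec:lg}) with $L(V_1)$, $L(V_2)$ pulled back from $X_1$, $X_2$.

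With this in hand I would expand
\[
\Phi(\gamma_1) \circ \Phi(\gamma_2) = \widehat{p}^{(1)}_* \circ \gamma_1 \circ (p^{(1)})^* \circ \widehat{p}^{(2)}_* \circ \gamma_2 \circ (p^{(2)})^*
\]
and substitute $(p^{(1)})^* \circ \widehat{p}^{(2)}_* = q'_* \circ q^*$ for the middle two factors. Next I would move the two gerbe isomorphisms to the ends. Naturality of pullback gives $q^* \circ \gamma_2 = (q^*\gamma_2) \circ q^*$, and combined with $q^* \circ (p^{(2)})^* = p^*$ this turns the right-hand factors into $(q^*\gamma_2) \circ p^*$; the compatibility of pushforward with a gerbe isomorphism pulled back from the target (a form of the projection formula, property (2)) gives $\gamma_1 \circ q'_* = q'_* \circ (q'^*\gamma_1)$, and combined with $\widehat{p}^{(1)}_* \circ q'_* = \widehat{p}_*$ (property (1)) this turns the left-hand factors into $\widehat{p}_* \circ (q'^*\gamma_1)$. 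Hence
\[
\Phi(\gamma_1) \circ \Phi(\gamma_2) = \widehat{p}_* \circ (q'^*\gamma_1) \circ (q^*\gamma_2) \circ p^*.
\]
Finally, since $p^*(\mathcal{G}_1) = q'^*(p^{(1)})^*(\mathcal{G}_1)$, $\widehat{p}^*(\widehat{\mathcal{G}}_2) = q^*(\widehat{p}^{(2)})^*(\widehat{\mathcal{G}}_2)$, and so on, the composite $(q'^*\gamma_1) \circ (q^*\gamma_2)$ is by definition the tensor product isomorphism $\gamma_1 \otimes \gamma_2 : p^*(\mathcal{G}) \to \widehat{p}^*(\widehat{\mathcal{G}})$ on $C$ (no sign corrections intervene since all gerbes here are trivially graded), so the right-hand side is exactly $\widehat{p}_* \circ (\gamma_1 \otimes \gamma_2) \circ p^* = \Phi(\gamma_1 \otimes \gamma_2)$.

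The step I expect to be the main obstacle is keeping the twisting data straight throughout: one has to check that the vertical-tangent twists carried by $\widehat{p}^{(2)}_*$, $\widehat{p}^{(1)}_*$ and $q'_*$ fit together (this is where $L(V) \cong L(V_1) \otimes L(V_2)$ and the fact that each factor is pulled back from the corresponding torus are used), that the source of $q'_*$ produced by base change really has the form (vertical twist) $\otimes\, q'^*(\text{class on } C_1)$ so that $q'_*$ is even defined, and that the identification $(q'^*\gamma_1)\circ(q^*\gamma_2) = \gamma_1 \otimes \gamma_2$ holds on the nose. Each of these is a routine unwinding of the definitions of Sections \ref{sec:grg} and \ref{sec:tkr}; the only genuinely delicate ingredient is the base change formula, which is precisely why it was isolated as a property in Section \ref{sec:tkr}.
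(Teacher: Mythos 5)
Your proof is correct and is essentially the same computation the paper carries out, with the same key ingredients: factoring the pushforward through an intermediate correspondence, the projection formula, and the base change property (5) of Section \ref{sec:tkr}. The only cosmetic difference is the direction — you start from $\Phi(\gamma_1)\circ\Phi(\gamma_2)$ and assemble it into $\Phi(\gamma_1\otimes\gamma_2)$, whereas the paper expands $\Phi(\gamma_1\otimes\gamma_2)$ and peels off $\Phi(\gamma_2)$ first — and your correspondence notation $(C_1,C_2,Y,q,q')$ in place of the paper's indexed projections $\pi^I_J$.
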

\begin{proof}
The proof is a fairly straightward calculation. The only difficulty is keeping track of all the different maps involved. For this purpose, we introduce the following notation. Let $X_{1^{\vee}} = \widehat{X}_1$, $X_{2^{\vee}} = \widehat{X}_2$. If $I \subseteq \{ 1 , 2 , 1^{\vee} , 2^{\vee} \}$, let $X_I = \prod_{i \in I} X_i$. If $J \subseteq I \subseteq \{1,2,1^{\vee}, 2^{\vee} \}$, let $\pi^I_J : X_I \to X_J$ be the projection map. Then we have:
\begin{align*}
& \Phi(\gamma_1 \otimes \gamma_2)(x) \\
&\quad =(\pi^{1 1^{\vee} 2 2^{\vee} }_{1^\vee 2^\vee})_*(  (\pi^{11^{\vee}22^{\vee}}_{11^{\vee}})^*(\gamma_1) \otimes (\pi^{11^{\vee}22^{\vee}}_{2 2^{\vee}})^*(\gamma_2) \otimes (\pi^{121^{\vee} 2^{\vee}}_{12})^*(x) ) \\
&\quad = (\pi^{11^{\vee} 2^{\vee}}_{1^\vee 2^\vee})_* (\pi^{1 1^{\vee} 2 2^{\vee}}_{1 1^{\vee} 2^{\vee}})_*( (\pi^{11^{\vee}22^{\vee}}_{11^{\vee}})^*(\gamma_1) \otimes (\pi^{11^{\vee}22^{\vee}}_{2 2^{\vee}})^*(\gamma_2) \otimes (\pi^{11^\vee 22^\vee}_{12 2^\vee})^*( \pi^{12 2^\vee}_{12})^*(x) ) \\
&\quad = (\pi^{11^{\vee} 2^{\vee}}_{1^\vee 2^\vee})_* ( (\pi^{11^\vee 2^\vee}_{11^\vee})^*(\gamma_1) \otimes (\pi^{1 1^{\vee} 2 2^{\vee}}_{1 1^{\vee} 2^{\vee}})_*(  (\pi^{11^{\vee}22^{\vee}}_{2 2^{\vee}})^*(\gamma_2) \otimes (\pi^{11^\vee 22^\vee}_{1 2 2^\vee})^*( \pi^{12 2^\vee}_{12})^*(x) ) ).
\end{align*}

Next, we have
\begin{align*}
& (\pi^{1 1^{\vee} 2 2^{\vee}}_{1 1^{\vee} 2^{\vee}})_*(  (\pi^{11^{\vee}22^{\vee}}_{2 2^{\vee}})^*(\gamma_2) \otimes (\pi^{11^\vee 22^\vee}_{12 2^\vee})^*( \pi^{12 2^\vee}_{12})^*(x) )  \\
&= (\pi^{1 1^{\vee} 2 2^{\vee}}_{1 1^{\vee} 2^{\vee}})_*(\pi^{11^\vee 2 2^\vee}_{122^\vee})^*(   (\pi^{122^{\vee}}_{2 2^{\vee}})^*(\gamma_2) \otimes (\pi^{122^\vee}_{12})^*(x) ) \\
&= (\pi^{11^\vee 2^\vee}_{1 2^\vee})^* (\pi^{12 2^\vee}_{1 2^\vee})_*(   (\pi^{122^{\vee}}_{2 2^{\vee}})^*(\gamma_2) \otimes (\pi^{122^\vee}_{12})^*(x) ) \\
&=  (\pi^{11^\vee 2^\vee}_{1 2^\vee})^* \Phi(\gamma_2)(x).
\end{align*}

Hence
\begin{align*}
\Phi(\gamma_1 \otimes \gamma_2)(x) &= (\pi^{11^\vee 2^\vee}_{1^\vee 2^\vee})_*(  (\pi^{11^\vee 2^\vee}_{11^\vee})^*(\gamma_1) \otimes ( \pi^{11^\vee 2^\vee}_{1 2^\vee})^*( \Phi(\gamma_2)(x) ) ) \\
&= \Phi(\gamma_1)(( \Phi(\gamma_2)(x))).
\end{align*}

\end{proof}

As a consequence of Lemma \ref{lem:fmfactor} and Equation (\ref{equ:fmcomp}), we have that $\Phi(\gamma)$ is an isomorphism if and only of $\Phi(\gamma_1)$ and $\Phi(\gamma_2)$ are ismorphisms.

We have just demonstrated that if a Real affine torus $X$ decomposes as a product $X = X_1 \times X_2$ then the Fourier--Mukai transform $\Phi(\gamma)$ associated to any $T$-dual tuple $(X , \mathcal{G} , \widehat{X} , \widehat{\mathcal{G}} , \gamma)$ can be expressed in terms of isomorphisms $t_N, t_{\widehat{N}}$ and partial Fourier--Mukai transforms $\Phi(\gamma_1), \Phi(\gamma_2)$. Applying the above argument iteratively, if we decompose $X$ into indecomposables $X = X_1 \times X_2 \times \cdots \times X_k$, then $\Phi(\gamma)$ can be expressed as a composition of isomorphisms (of the form $t_N$ for some Real line bundles) and the partial Fourier--Mukai transforms $\Phi(\gamma_1) , \dots , \Phi(\gamma_2)$. In particular, if $\Phi(\gamma_i)$ is an isomorphism for each $i$, then $\Phi(\gamma)$ is an isomorphism. Each of the partial Fourier--Mukai transforms $\Phi(\gamma_i)$ is a special case of the Fourier--Mukai transform for product torus bundles $X_i \times B$, $\widehat{X}_i \times B$ over a base space $B$.

Let $(B , \sigma_B)$ be a smooth manifold with smooth involution $\sigma_B$. We will say $B$ is {\em RFMI} if the Real Fourier--Mukai transform is an isomorphisms for all affine torus bundles over $B$ of the form $(X , \sigma_X) = (T , \sigma_T) \times (B , \sigma_B)$, where $(T , \sigma_T)$ is an indecomposable Real torus. By the discussion above, we have proven that if every compact smooth $(B , \sigma_B)$ is RMFI, then the Real Fourier--Mukai transform is an isomorphism whenever the base space is a point. Hence by Lemma \ref{lem:redpt}, we will have shown that the Fourier--Mukai transform is an isomorphism for any compact smooth base space $B$ and any smooth involution $\sigma_B$.

The next lemma reduces us to the problem of showing $\{ pt\}$ is RFMI. That is, that the Real Fourier--Mukai transform is an isomorphism in the case of indecomposable tori.

\begin{lemma}
Suppose that $\{ pt \}$ is RFMI. Then $(B , \sigma)$ is RFMI for any compact smooth manifold $B$ and any smooth involution $\sigma$.
\end{lemma}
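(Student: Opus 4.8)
The plan is to run through the same chain of reductions that leads to Lemma~\ref{lem:redpt}, but now tracking only torus bundles of the product form $(X,\sigma_X)=(T,\sigma_T)\times(B,\sigma_B)$. The crucial observation is that every geometric operation used in Section~\ref{sec:redtor} --- restriction to an open subset of the base, pullback along a Real vector bundle, passage to a double cover, and doubling across a boundary --- sends a product torus bundle over a base to a product of the \emph{same} $(T,\sigma_T)$ with the new base. Consequently each Mayer--Vietoris step of Section~\ref{sec:redtor}, which relies only on Proposition~\ref{prop:mv} together with the base change and projection formulas for pushforward, restricts to a statement about RFMI tuples: the commuting ladder of Mayer--Vietoris sequences and Fourier--Mukai transforms carries over verbatim.

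With this in hand the argument proceeds as follows. First, by the (evident, product-preserving) analogue of Lemma~\ref{lem:thomrfm}, if $B_0$ is RFMI and $r\colon B\to B_0$ is a Real vector bundle then $B$ is RFMI; here $T\times B=r^*(T\times B_0)$, any affine gerbe on $T\times B$ is pulled back from its restriction to $T\times B_0$, and the Fourier--Mukai transform over $B$ is intertwined with that over $B_0$ by the Thom isomorphism $r_*$ exactly as in Lemma~\ref{lem:thomrfm}. Next, the reduction to the trivial involution: cover $B$ by $U=B\setminus B^\sigma$ and a tubular neighbourhood $V=\nu B^\sigma$, noting that $\sigma$ is free on $U$ and on $U\cap V$, so both are RFM --- hence RFMI --- by Lemmas~\ref{lem:free} and~\ref{lem:free2}, while $V$ is a Real vector bundle over $B^\sigma$; a Mayer--Vietoris argument then shows that $(B,\sigma)$ is RFMI provided $B^\sigma$ (with trivial involution) is. Finally, for a compact manifold with trivial involution one repeats the Morse-theoretic argument of Lemma~\ref{lem:rfmtriv}: handle attachments reduce RFMI to the case of spheres with trivial involution, which is handled by induction on dimension as in Lemma~\ref{lem:sphere} (writing $S^n$ as two discs glued along $S^{n-1}$, with the disc interiors RFMI by the Thom analogue), the base case $S^0$ being two points and hence RFMI by hypothesis. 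Combining these, $(B,\sigma)$ is RFMI for every compact smooth $B$ and every smooth involution $\sigma$.

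The work here is bookkeeping rather than a genuine obstacle. The one point to check with care is that restricting or pulling back the complete Real $T$-dual $5$-tuple $(X,\mathcal{G},\widehat{X},\widehat{\mathcal{G}},\gamma)$ again yields such a $5$-tuple over the new base with $X$ still a product of $(T,\sigma_T)$ with that base, so that the inductive hypothesis applies at each stage; and that the Fourier--Mukai transform commutes with the coboundary map $\delta=\iota_*\circ i_Y^*$ of the Mayer--Vietoris sequence of Proposition~\ref{prop:mv}, which follows from functoriality of pullback, the projection formula, and the base change property of pushforward, precisely as in the commuting diagram displayed in Section~\ref{sec:redtor}. No ingredient beyond Section~\ref{sec:redtor} and the known complex Fourier--Mukai isomorphism (used inside Lemmas~\ref{lem:free} and~\ref{lem:free2}) is required.
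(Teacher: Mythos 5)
Your proposal is correct and takes essentially the same approach as the paper, whose proof is the single sentence ``this follows by an identical argument to the proof of Lemma~\ref{lem:redpt} given in Section~\ref{sec:redtor}.'' You have spelled out why the argument is identical: each of the geometric reductions (Thom/vector bundle, fixed-point/tubular neighbourhood, Morse handles, free quotients) preserves the product form $(T,\sigma_T)\times(B,\sigma_B)$, the free cases already establish RFM unconditionally (hence RFMI), and the Mayer--Vietoris machinery of Proposition~\ref{prop:mv} applies verbatim to the restricted class of product torus bundles, so the inductive hypothesis lands where it should at every stage.
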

\begin{proof}
This follows by an identical argument to the proof of Lemma \ref{lem:redpt} given in Section \ref{sec:redtor}.
\end{proof}

\subsection{Indecomposable tori}\label{sec:indtor}

In this section we restrict to the case that $B = \{pt\}$ and $X$ is indecomposable. By the results of Section \ref{sec:gentor}, if the Real Fourier--Mukai transform is an isomorphism in this case then it is an isomorphism for all affine torus bundles over any compact smooth base and so the proof of Theorem \ref{thm:rfm} will be complete.

Assume that $X$ is an indecomposable Real torus. More precisely, we have $X = T = V/\Lambda$, $\Gamma = \mathbb{Z}_2 = \langle \sigma \rangle$. Then $\rho : \mathbb{Z}_2 \to GL(\Lambda)$ makes $\Lambda$ into a $\mathbb{Z}_2$-module. We have that $\rho$ is indecomposable. As discussed in Example \ref{ex:Bpt}, there are three possible cases for $\rho$: 
\begin{itemize}
\item[(1)]{Trivial: $\Lambda = \mathbb{Z}$, $\rho(x) = x$.}
\item[(2)]{Cyclotomic: $\Lambda = \mathbb{Z}$, $\rho(x) = -x$.}
\item[(3)]{Regular: $\Lambda = \mathbb{Z}^2$, $\rho(x,y) = (y,x)$.}
\end{itemize}

Let $\rho^{\vee}$ be the $T$-dual monodromy representation on $\Lambda^*$, namely $\rho^{\vee}(x) = -\rho^*(x)$. Observe that $\rho \mapsto \rho$ exchanges the trivial and cyclotomic cases and sends the regular case to itself.

Let $c \in H^2_{\mathbb{Z}_2}(pt ; \Lambda_\rho)$ denote the first Chern class of $X$. As in Example \ref{ex:Bpt}, we have that $c=0$ in the cyclotomic and regular cases. In the trivial case $
\Lambda_\rho = \mathbb{Z}$, we have $H^2_{\mathbb{Z}_2}(pt ; \Lambda_\rho) \cong \mathbb{Z}_2$, so there are two choices depending on whether $c$ is zero or non-zero.

By assumption $\mathcal{G}$ is an affine gerbe, so its Dixmier--Douady class lifts to $H^0_{\mathbb{Z}_2}(pt ; \mathbb{Z}_2) \oplus H^2_{\mathbb{Z}_2}(pt ; \mathcal{A}(X)_- )$. The first summand $H^0_{\mathbb{Z}_2}(pt ; \mathbb{Z}_2) \cong \mathbb{Z}_2$ is the grading class of the $\mathcal{G}$. Twisting $KR$-theory by this class just results in a grading shift. The second term $H^2_{\mathbb{Z}_2}(pt ; \mathcal{A}(X)_-)$ is more interesting. We calculate this group for the four possible cases of $(\rho , c)$. First of all note that any affine $S^1$-valued function on $X = V/\Lambda$ can be identified with a pair $( \lambda , u ) \in \Lambda^* \times S^1$ via $(\lambda , u ) \leftrightarrow f$, where $f(t) = \lambda(t) + u$. To compute $H^2_{\mathbb{Z}_2}(pt ; \mathcal{A}(X)_-)$, we note that in general for a $\mathbb{Z}_2$-module $A$, we have
\[
H^2_{\mathbb{Z}_2}(pt ; A) = \frac{\{ a \in A \; | \; \sigma(a) = a \}}{\{ b+\sigma(b) \; | \; b \in A\} }.
\]

{\bf Case 1: $\Lambda_\rho = \mathbb{Z}$, $c=0$.} Then $\sigma : X \to X$ is the identity map. The $\mathbb{Z}_2$-action on $\mathcal{A}(X)_- \cong \mathbb{Z} \times S^1$ is $\sigma(\lambda , u) = (-\lambda , -u)$. Hence $H^2_{\mathbb{Z}_2}(pt ; \mathcal{A}(X)_-) \cong \mathbb{Z}_2$. Moreover, the pullback map 
\[
\pi^* : H^2_{\mathbb{Z}_2}( pt ; S^1_-) \to H^2_{\mathbb{Z}_2}( pt ; \mathcal{A}(X)_-)
\]
is an isomorphism. Hence $\mathcal{G}$ is the pullback of a Real gerbe on $B = \{pt\}$.

{\bf Case 2: $\Lambda_\rho = \mathbb{Z}$, $c \neq 0$.} In this case $X = \mathbb{R}/\mathbb{Z}$ with $\sigma(x) = x+1/2$. The $\mathbb{Z}_2$-action on $\mathcal{A}(X)_- \cong \mathbb{Z} \times S^1$ is $\sigma(\lambda , u ) = (-\lambda , -u - \lambda/2)$. One finds that $H^2_{\mathbb{Z}_2}(pt ; \mathcal{A}(X)_-) = 0$.

{\bf Case 3: $\Lambda_\rho = \mathbb{Z}_-$.} In this case $X = \mathbb{R}/\mathbb{Z}$ with $\sigma(x) = -x$. The $\mathbb{Z}_2$-action on $\mathcal{A}(X)_- \cong \mathbb{Z} \times S^1$ is $\sigma(\lambda , u ) = (\lambda , -u)$. One finds that $H^2_{\mathbb{Z}_2}(pt ; \mathcal{A}(X)_-) \cong \mathbb{Z}_2 \oplus \mathbb{Z}_2$. Observe that the action of $\sigma$ on $X$ has two fixed points $x_0 = 0$, $x_1 = 1/2$. By restricting to these points we get a homomorphism
\[
H^3_{\mathbb{Z}_2}(X ; \mathbb{Z}_-) \to H^3_{\mathbb{Z}_2}(x_0 ; \mathbb{Z}_-) \oplus H^3_{\mathbb{Z}_2}(x_1 ; \mathbb{Z}_-) \cong \mathbb{Z}_2 \oplus \mathbb{Z}_2.
\]
One finds that this map is an isomorphism, using localisation in equivariant cohomology. The map $H^2_{\mathbb{Z}_2}(pt ; \mathcal{A}(X)_-) \to H^3_{\mathbb{Z}_2}(X ; \mathbb{Z}_-)$ from affine Real gerbes to ordinary Real gerbes is also seen to be an isomorphism. Therefore, the isomorphism class of $\mathcal{G}$ is detected by its restrictions $\mathcal{G}|_{x_0}$, $\mathcal{G}|_{x_1}$. If $\mathcal{G}|_{x_0} \cong \mathcal{G}|_{x_1}$, then $\mathcal{G}$ is the pullback of a class in $H^3_{\mathbb{Z}_2}(pt ; \mathbb{Z}_-)$.

{\bf Case 4: $\Lambda_\rho = R$, the regular representation.} In this case $X = \mathbb{R}^2/\mathbb{Z}^2$ with $\sigma(x,y) = (y,x)$. One finds that $\mathcal{A}(X)_- \cong R \times S^1_-$ as $\mathbb{Z}_2$-modules and hence $H^2_{\mathbb{Z}_2}( pt ; \mathcal{A}(X)_-) \cong H^2_{\mathbb{Z}_2}(pt ; S^1_-) \cong \mathbb{Z}_2$. This also implies that $\mathcal{G}$ is a pullback from $pt$.

By a {\em point gerbe} on $X$, we mean a Real graded gerbe on $X$ obtained by pulling back a Real graded gerbe $\mathcal{H}$ over a point $\{pt\}$ under the map $X \to \{pt\}$. Let us say that two affine gerbes $\mathcal{G}, \mathcal{G}'$ on $X$ are {\em equivalent modulo point gerbes} if $\mathcal{G}' \cong \mathcal{G} \otimes \mathcal{H}$, where $\mathcal{H}$ is a point gerbe. If $(\widehat{X} , \widehat{\mathcal{G}})$ is Real $T$-dual to $(X , \mathcal{G})$, then it follows easily that $(\widehat{X} , \widehat{\mathcal{G}} \otimes \mathcal{H})$ is Real $T$-dual to $(X , \mathcal{G}')$. Moreover, the effect of twisting by a point gerbe $\mathcal{H}$ on $KR$-theory is just a grading shift. It follows that if the Real Fourier--Mukai transform for the pair $(X , \mathcal{G}), (\widehat{X} , \widehat{\mathcal{G}})$ is an isomorphism, then it is also an isomorphism for the pair $(X , \mathcal{G}') , (\widehat{X} , \widehat{\mathcal{G}} \otimes \mathcal{H})$. Therefore, it is enough to check that the Real Fourier--Mukai transform is an isomorphism for one particular representative of an equivalence class of affine gerbes modulo point gerbes. Looking again at Cases 1-4, we see that in Cases 1,2 and 4, $\mathcal{G}$ is equivalent modulo point gerbes to a trivial gerbe, whereas in Case 3 there are two equivalence classes: the trivial case $\mathcal{G}|_{x_0} \cong \mathcal{G}|_{x_1}$ and the non-trivial case $\mathcal{G}|_{x_0} \ncong \mathcal{G}|_{x_1}$. Also the gradings on $\mathcal{G}$ and $\widehat{\mathcal{G}}$ must be equal to each other and must be a pullback from $pt$. So modulo point gerbes, we can assume that $\mathcal{G}$ and $\widehat{\mathcal{G}}$ are trivially graded. It follows that modulo point gerbes there are exactly five types:
\begin{itemize}
\item[($T_1$)]{$X = \mathbb{R}/\mathbb{Z}$, $\sigma(x) = x$, $\mathcal{G}$ is trivial.}
\item[($T_2$)]{$X = \mathbb{R}/\mathbb{Z}$, $\sigma(x) = x+1/2$, $\mathcal{G}$ is trivial.}
\item[($T_3$)]{$X = \mathbb{R}/\mathbb{Z}$, $\sigma(x) = -x$, $\mathcal{G}$ is trivial.}
\item[($T_4$)]{$X = \mathbb{R}/\mathbb{Z}$, $\sigma(x) = -x$, $\mathcal{G}|_{x_0} \ncong \mathcal{G}|_{x_1}$.}
\item[($T_5$)]{$X = \mathbb{R}^2/\mathbb{Z}^2$, $\sigma(x,y) = (y,x)$, $\mathcal{G}$ is trivial.}
\end{itemize}

Let us work out which pairs are Real $T$-duals. By fixing $X$ and $\mathcal{G}$, the isomorphism class of $\widehat{X}$ is completely determined and $\widehat{\mathcal{G}}$ is determined up to tensoring with a point gerbe (with trivial grading). If $\sigma$ has fixed points on both $X$ and $\widehat{X}$, then the pullback $H^3_{\mathbb{Z}_2}(pt ; \mathbb{Z}_-) \to H^3_{\mathbb{Z}_2}(X \times_B \widehat{X} ; \mathbb{Z}_-)$ is injective and therefore the existence of an isomorphism $\gamma : p^*(\mathcal{G})  \to \widehat{p}^*(\widehat{\mathcal{G}})$ implies that $\widehat{\mathcal{G}}$ is uniqely determined by $(X , \mathcal{G})$. On the other hand, if $\sigma$ acts freely on $X$ or on $\widehat{X}$, then either $H^3_{\mathbb{Z}_2}(X ; \mathbb{Z}_-) = 0$ or $H^3_{\mathbb{Z}_2}(\widehat{X} ; \mathbb{Z}_-) = 0$. It follows that the pullback map $H^3_{\mathbb{Z}_2}(pt ; \mathbb{Z}_-) \to H^3_{\mathbb{Z}_2}(X \times_B \widehat{X} ; \mathbb{Z}_-)$ is zero, because it factors through $H^3_{\mathbb{Z}_2}(X ; \mathbb{Z}_-)$ and also through $H^3_{\mathbb{Z}_2}(\widehat{X} ; \mathbb{Z}_-)$. Hence in this case, if $(\widehat{X} , \widehat{\mathcal{G}})$ is Real $T$-dual to $(X , \mathcal{G})$, then so is $(\widehat{X} , \widehat{\mathcal{G}} \otimes \widehat{\pi}^*(\mathcal{H}))$ for any point gerbe $\mathcal{H}$ (with trivial grading).

From the above discussion it follows that, modulo point gerbes and modulo swapping the roles of $(X , \mathcal{G})$ and $(\widehat{X} , \widehat{\mathcal{G}})$, the Real $T$-dual pairs are:
\begin{itemize}
\item[(1)]{$X = \mathbb{R}/\mathbb{Z}$, $\sigma(x) = x$, $\widehat{X} = \mathbb{R}/\mathbb{Z}$, $\sigma(x) = -x$, $\mathcal{G}$ and $\widehat{\mathcal{G}}$ are trivial.}
\item[(2)]{$X = \mathbb{R}/\mathbb{Z}$, $\sigma(x) = x+1/2$, $\mathcal{G}$ is trivial. $\widehat{X} = \mathbb{R}/\mathbb{Z}$, $\sigma(x) = -x$, $\widehat{\mathcal{G}}$ is any Real gerbe with trivial grading and with $\mathcal{G}|_{x_0} \ncong \mathcal{G}|_{x_1}$.}
\item[(3)]{$X = \widehat{X} = \mathbb{R}^2/\mathbb{Z}^2$, $\sigma(x,y) = (y,x)$, $\mathcal{G}$ and $\widehat{\mathcal{G}}$ are trivial.}
\end{itemize}

Now let $(X,\mathcal{G})$ be one of the indecomposable types $T_1$-$T_5$, let $(\widehat{X} , \widehat{\mathcal{G}})$ be a Real $T$-dual and let $\gamma : p^*(\mathcal{G}) \to \widehat{p}^*(\widehat{\mathcal{G}})$ be an isomorphism satisfying the Poincar\'e property. Reversing the roles of $X$ and $\widehat{X}$, we have that $\gamma^{-1} : \widehat{p}^*(\widehat{\mathcal{G}}) \to p^*(\mathcal{G})$ also satisfies the Poincar\'e property and so we get a pair of Fourier--Mukai transforms
\[
\Phi : KR^*(X , \mathcal{G} ) \to KR^{*-n}(\widehat{X} , L(V_\rho)^{-1} \otimes \widehat{\mathcal{G}})
\]
and
\[
\Phi' : KR^{*-n}(\widehat{X} , L(V_\rho)^{-1} \otimes \widehat{\mathcal{G}}) \to KR^{*-2n}(X , L(V_\rho)^{-1} \otimes L(V_{\rho^{\vee}})^{-1} \otimes \mathcal{G})
\]
where $n = dim(X)$. We observe that in all cases $T_1$-$T_5$, we have $L(V_{\rho}) \otimes L(V_{\rho^\vee}) \cong L(V_\rho \oplus V_{\rho^{\vee}}) \cong L( \mathbb{R}^n \oplus \mathbb{R}^n_{-})$. Hence
\[
KR^{*-2n}(X , L(V_\rho)^{-1} \otimes L(V_{\rho^{\vee}})^{-1} \otimes \mathcal{G}) \cong KR^{*-2n}(X , L(\mathbb{R}^n \oplus \mathbb{R}^n_{-}) \otimes \mathcal{G} ) \cong KR^*(X , \mathcal{G}).
\]
Therefore the composition of $\Psi = \Phi' \Phi$ is an endomorphism
\[
\Psi : KR^*(X , \mathcal{G}) \to KR^*(X , \mathcal{G}).
\]
In a similar manner, the composition $\Psi' = \Phi \Phi'$ is an endomorphism
\[
\Psi' : KR^*(\widehat{X} , L(V_\rho)^{-1} \otimes \widehat{\mathcal{G}}) \to KR^*(\widehat{X} , L(V_\rho)^{-1} \otimes \widehat{\mathcal{G}}).
\]
Since $L(V_\rho)^{-1}$ is a point gerbe, it just alters the twisted $KR$-theory by a degree shift. So we can identify $\Psi$ with an endomorphism
\[
\Psi' : KR^*(\widehat{X} ,  \widehat{\mathcal{G}}) \to KR^*(\widehat{X} ,  \widehat{\mathcal{G}}).
\]
We will show that $\Psi, \Psi'$ are isomorphisms. This implies that $\Phi$ and $\Phi'$ are isomorphisms: injectivity of $\Psi$ implies injectivity of $\Phi$. Surjectivity of $\Psi'$ implies surjectivity of $\Phi$, so $\Phi$ is an isomorphism. Similarly $\Phi'$ is an isomorphism. Furthermore, since exchanging the roles of $(X , \mathcal{G})$ and $(\widehat{X} , \widehat{\mathcal{G}})$ exchanges $\Psi$ and $\Psi'$, we just need to show that $\Psi$ is an isomorphism in each of the five cases $T_1-T_5$.

To show that $\Psi : KR^*(X , \mathcal{G}) \to KR^*(X , \mathcal{G})$ is an isomorphism, we will make use of two key properties. First of all $\Psi$ is a morphism of $KR^*(pt)$-modules. This is clear because $\Psi$ is the composition of Fourier--Mukai transforms $\Phi,\Phi'$ which themselves are compositions of pullbacks, pushforwards and gerbe isomorphisms. Each of these operations are morphisms of $KR^*(pt)$-modules. The second key property of $\Psi$ is compatibility with the forgetful map $\varrho : KR^*(X , \mathcal{G}) \to K^*(X , \mathcal{G})$ to complex $K$-theory. Namely, we have a commutative diagram
\[
\xymatrix{
KR^*(X , \mathcal{G}) \ar[r]^-{\Psi} \ar[d]^-{\varrho} & KR^*(X , \mathcal{G}) \ar[d]^-{\varrho} \\
K^*(X , \mathcal{G}) \ar[r]^-{\Psi_K} & K^*(X , \mathcal{G})
}
\]
where $\Psi_K : K^*(X , \mathcal{G}) \to K^*(X , \mathcal{G})$ is defined identically to $\Psi$ but using complex $K$-theory. Note that $\mathcal{G}$ is trivial as a gerbe without Real structure, so that $K^*(X , \mathcal{G})$ is isomorphic to the untwisted $K$-theory of $X$. Since the Fourier--Mukai transform is known to be an isomorphism in complex $K$-theory, we have that $\Psi_K$ is an isomorphism.

We now proceed to the verification that $\Psi$ is an isomorphism in types $T_1$-$T_5$. In the calculations that follow, we will regard $KR$-theory as being $\mathbb{Z}_8$-graded, that is $KR^{j+8} = KR^j$ is an equality, not just an isomorphism. Let $R^* = KR^*(pt)$ be the $KR$-theory of a point. Then $R^* \cong \mathbb{Z}[\eta , h]/(2\eta , \eta^2 , \eta h, h^2 - 4)$, where $deg(\eta) = -1$, $deg(h) = 4$. Let $R_K^* = K^*(pt)$ regarded as $8$-periodic. Thus $R_K^* \cong \mathbb{Z}[\xi]/(\xi^4-1)$, where $deg(\xi) = -2$. The forgetful map $\varrho : R^* \to R^*_K$ makes $R^*_K$ into an $R^*$-module and is given by $\varrho(\eta) = 0$, $\varrho(h) = 2\xi^2$.

{\bf Type $T_1$.} $X = S^1$ with trivial involution and trivial gerbe. From the Thom isomorphism, we have $KR^*(X) \cong R^* \oplus R^* \tau$ where $deg(\tau) = 1$. Thus $KR^*(X)$ is a free $R^*$-module with basis $1,\tau$. Similarly $K^*(X) \cong R^*_K \oplus R^*_K \tau_K$ is a free $R^*_K$ with basis $1,\tau_K$, $deg(\tau_K) = 1$. The forgetful map is given by $\varrho(1) = 1$, $\varrho(\tau) = \tau_K$. Since $\Psi_K$ is an isomorphism, we have $\Psi_K(\tau_K) = \pm \tau_K$. Then the compatibility $\varrho \Psi = \Psi_K \varrho$ implies that $\Psi(\tau) = \pm \tau$. Since $KR^0(X) \cong \mathbb{Z} \oplus \mathbb{Z}_2 (\eta \tau)$, we have $\Psi(1) = a + b \eta \tau$ for some $a \in \mathbb{Z}$, $b \in \mathbb{Z}_2$. The compatibility of $\Psi$ and $\Psi_K$ gives $a = \pm 1$, hence $\Psi(1), \Psi(\tau)$ is a basis for $KR^*(X)$ as an $R^*$-module and thus $\Psi$ is an isomorphism.

{\bf Type $T_2$.} $X = S^1 = \mathbb{R}/\mathbb{Z}$ with involution $\sigma(x) = x+1/2$ and with trivial gerbe. Regard $S^1$ as the boundary of the closed unit disc $D \subset \mathbb{C}$ equipped with the involution $\sigma(z) = -z$. Then we have a long exact sequence associated to the pair $(D ,  X )$:
\[
\cdots \to KR^i(D,X) \to KR^i(D) \to KR^*(X) \to \cdots
\]
The Thom isomorphism gives $KR^i(D,X) \cong R^* \tau$ where $deg(\tau) = -2$ (the degree is $-2$ instead of $+2$ because $\sigma$ acts as $-1$ on $D$). Contractibility of $D$ implies $KR^*(D) \cong KR^*(pt)$. Thus the long exact sequence takes the form
\[
\cdots \to KR^i(pt) \to KR^i(X) \to KR^{i+3}(pt) \to \cdots
\]
From this we find $KR^0(X) \cong \mathbb{Z}$, $KR^1(X) \cong \mathbb{Z}$, $KR^2(X) \cong 0$, $KR^3(X) \cong \mathbb{Z}_2$. Furthermore, $X$ admits a Quaternionic line bundle, namely the trivial line bundle $S^1 \times \mathbb{C}$ equipped with the Quaternionic structure $(z , t ) \mapsto (-z , \overline{z}t)$ (here $S^1$ is taken to be the unit circle in $\mathbb{C}$). This gives a class $q \in KR^4(X)$ with the property that $q^2 = 1$. Hence tensoring by $q$ gives an isomorphism $KR^{i}(X) \cong KR^{i+4}(X)$ for all $i$. Let $1 \in KR^0(X)$ denote the identity element and let $\omega \in KR^1(X) \cong \mathbb{Z}$ be a generator. Then our calculations show that
\begin{align*}
KR^0(X) &\cong \mathbb{Z}1 &  KR^4(X) &\cong \mathbb{Z}q \\
KR^1(X) &\cong \mathbb{Z}\omega & KR^5(X) &\cong \mathbb{Z} q\omega \\
KR^2(X) &\cong 0 & KR^6(X) &\cong 0 \\
KR^3(X) &\cong \mathbb{Z}_2 \eta q & KR^7(X) &\cong \mathbb{Z}_2 \eta
\end{align*}

Since the forgetful map $\varrho : KR^{-2}(D , X) \to K^{-2}(D,X) \cong K^2(D,X)$ sends the Thom class $\tau \in KR^{-2}(D,X)$ to $\xi^2$ times the Thom class $\tau_K \in K^2(D,X)$, we have $\varrho(\tau) = \xi^2 \tau_K$. Then compatibility of the forgetful maps from $KR$ $K$ with the long exact sequences for $(D,X)$ implies that $\varrho(\omega)$ is a generator of $K^1(X) \cong \mathbb{Z}$. Let us denote this generator by $\omega_K$. Then the complex $K$-theory of $X$ is given by 
\begin{align*}
K^0(X) &\cong \mathbb{Z}1 &  K^4(X) &\cong \mathbb{Z}\xi^2 \\
K^1(X) &\cong \mathbb{Z}\omega_K & K^5(X) &\cong \mathbb{Z} \xi^2 \omega_K \\
K^2(X) &\cong \mathbb{Z}\xi^3 & K^6(X) &\cong \mathbb{Z} \xi \\
K^3(X) &\cong \mathbb{Z} \xi^3 \omega_K & K^7(X) &\cong \mathbb{Z} \xi \omega_K
\end{align*}

Since $h^2 = 4$ and $q^1 = 1$, it follows that $h = 2q$. Then since $\varrho(h) = 2\xi^2$, we must have $\varrho(q) = \xi^2$. So $\varrho(1) = 1$, $\varrho(\omega) = \omega_K$, $\varrho(q) = \xi^2$, $\varrho(q \omega) = \xi^2 \omega_K$. The compatibility relation $\varrho \Psi = \Psi_K \varrho$ together with the fact that $\Psi_K$ is an isomorphism implies that $\Psi(1) = \pm 1$, $\Psi(\omega) = \pm \omega$, $\Psi(q) = \pm q$, $\Psi( q \omega ) = \pm q \omega$. Lastly, since $\Psi$ is a morphism of $R^*$-modules, $\Psi(\eta) = \eta \Psi(1) = \eta$ and $\Psi(\eta q) = \eta \Psi(q) = \eta q$. Thus $\Psi$ is an isomorphism.

{\bf Type $T_3$.} $X = S^1 = \mathbb{R}/\mathbb{Z}$, $\sigma(x) = -x$ with trivial gerbe. The Thom isomorphism gives $KR^*(X) \cong R^* \oplus R^* \tau_-$, where $deg(\tau_-) = -1$. From here the argument is essential the same as the case $T_1$. Compatibility with $\Psi_K$ implies that $\Psi(1) = \pm 1$, $\Psi(\tau_-) = \pm \tau_- + b \eta$ for some $b \in \mathbb{Z}_2$. Thus $\Psi$ is an isomorphism.

{\bf Type $T_4$.} $X = S^1 = \mathbb{R}/\mathbb{Z}$, $\sigma(x) = -x$ with gerbe $\mathcal{G}$ such that $\mathcal{G}$ has trivial grading and $\mathcal{G}|_{x_0} \ncong \mathcal{G}|_{x_1}$, where $x_0 = 0,x_1 = 1/2$ are the fixed points of $\sigma$. Set $A = \{x_0,x_1\}$. Then we have the long exact sequence of the pair $(X , A)$:
\[
\cdots \to KR^*(X,A,\mathcal{G}) \to KR^*(X,\mathcal{G}) \to KR^*(A , \mathcal{G}) \to \cdots
\]
Let $U$ be the image of $(-\epsilon , \epsilon) \cup (1/2 - \epsilon , 1/2 + \epsilon)$ in $X = \mathbb{R}/\mathbb{Z}$ where $\epsilon$ is sufficiently small. Excision gives
\begin{align*}
KR^*(X,A,\mathcal{G}) &\cong KR^*(X , U , \mathcal{G}) \\
& \cong KR^*(X \setminus A , U \setminus A , \mathcal{G}) \\
& \cong K^*( [\epsilon , 1/2 - \epsilon] , \{\epsilon , 1/2 - \epsilon\} , \mathcal{G}) \\
& \cong K^*( [0,1] , \{0,1\} ) \\
& \cong \widetilde{K}^*(S^1) \\
& \cong K^*(pt)\tau
\end{align*}
where $deg(\tau) = 1$. Since $\mathcal{G}$ is trivial on one of $x_0,x_1$ and non-trivial on the other, we have
\[
KR^*(A , \mathcal{G}) \cong KR^*(pt)\alpha \oplus KR^{*}(pt) \beta
\]
where $deg(\alpha) = 0$, $deg(\beta) = 4$. Comparing with ordinary $K$-theory, we see that $\delta : KR^*(A,\mathcal{G}) \to KR^{*+1}(X,A , \mathcal{G})$ sends $\alpha$ to $\pm \tau$. From this one deduces $KR^0(X,\mathcal{G}) \cong \mathbb{Z}, KR^1(X ,\mathcal{G}) \cong 0$, $KR^2(X , \mathcal{G}) \cong \mathbb{Z}_2$ and there is a short exact sequence $0 \to \mathbb{Z} \to KR^3(X , \mathcal{G}) \to \mathbb{Z}_2 \to 0$, hence $KR^3(X , \mathcal{G}) \cong \mathbb{Z}$ or $\mathbb{Z} \oplus \mathbb{Z}_2$. Next we observe that $KR^*(X , \mathcal{G})$ is $4$-periodic. To see this, let $f : X \to X$ be the map $f(x) = x+1/2$. Then $f$ commutes with $\sigma$ and $f$ swaps $x_0,x_1$. Hence $f^*(\mathcal{G}) \cong \mathcal{G} \otimes \mathcal{H}$, where $\mathcal{H}$ is the point gerbe corresponding to the generator of $H^3_{\mathbb{Z}_2}(pt ; \mathbb{Z}_-)$. Thus $f^*$ induces an isomorphism
\[
KR^*(X , \mathcal{G}) \cong KR^*(X , \mathcal{G} \otimes \mathcal{H}) \cong KR^{*+4}(X , \mathcal{G}).
\]
We will prove that $KR^3(X , \mathcal{G}) \cong \mathbb{Z}$. Assume to the contrary that $KR^3(X , \mathcal{G}) \cong \mathbb{Z} \oplus \mathbb{Z}_2$. We we derive a contradiction by showing this is incompatible with the Real Fourier--Mukai transform. Let $(\widehat{X} , \widehat{\mathcal{G}})$ be a Real $T$-dual to $(X , \mathcal{G})$. So $(\widehat{X}) , \widehat{\mathcal{G}})$ is of type $T_2$, that is $\widehat{X} = \mathbb{R}/\mathbb{Z}$ with involution $\sigma(x) = x+1/2$ and with trivial gerbe. Then we have Fourier--Mukai transforms
\[
\Phi : KR^j( X , \mathcal{G}) \to KR^{j-1}(\widehat{X} , L(\mathbb{R}_{-})^{-1} \otimes \widehat{\mathcal{G}}) \cong KR^{j+1}(\widehat{X}  , \widehat{\mathcal{G}})
\]
and
\[
\Phi' : KR^j(\widehat{X} , \widehat{\mathcal{G}}) \to KR^{j-1}(X , \mathcal{G}).
\]
Furthermore, we have already established that $\Psi' = \Phi \Phi' : KR^*(\widehat{X} , \widehat{\mathcal{G}}) \to KR^*(\widehat{X} , \widehat{\mathcal{G}})$ is an isomorphism when we studied the $T_2$ case. Consider the commutative diagram
\[
\xymatrix{
\mathbb{Z} \oplus \mathbb{Z}_2 \cong KR^3(X , \mathcal{G}) \ar[r]^-{\Phi} \ar[d]^-{\eta} & KR^4(\widehat{X} , \widehat{\mathcal{G}}) \cong \mathbb{Z} \ar[d]^-{\eta} \\
\mathbb{Z}_2 \cong KR^2(X , \mathcal{G}) \ar[r]^-{\Phi} & KR^3(\widehat{X} , \widehat{\mathcal{G}}) \cong \mathbb{Z}_2
}
\]
Since $\Psi' = \Phi \Phi'$ is an isomorphism, it follows that $\Phi : KR^2(X , \mathcal{G}) \to KR^3(\widehat{X} , \widehat{\mathcal{G}})$ is an isomorphism. Let $e = (0,1) \in \mathbb{Z} \oplus \mathbb{Z}_2 \cong KR^3(X , \mathcal{G})$. Since $e$ is torsion, $\Phi(e) = 0$ and thus $\eta \Phi(e) = \Phi( \eta e) = 0$. But $\Phi : KR^2(X , \mathcal{G}) \to KR^3(\widehat{X} , \widehat{\mathcal{G}})$ is an isomorphism and so $\eta e = 0$. On the other hand, under the map $KR^3(X , \mathcal{G}) \to KR^3(A , \mathcal{G}) \cong \mathbb{Z}_2 \eta \beta$, we must have that $e$ maps to $\eta \beta$, because the kernel of  $KR^3(X , \mathcal{G}) \to KR^3(A , \mathcal{G})$ has no torsion. Then under the map $KR^4(X , \mathcal{G}) \to KR^4(A , \mathcal{G}) \cong \mathbb{Z} h\alpha \oplus \mathbb{Z}_2 \eta^2 \beta$, we have that $\eta e$ maps to $\eta^2 \beta \neq 0$, hence $\eta e \neq 0$. This is a contradiction and so the assumption that $KR^3(X , \mathcal{G}) \cong \mathbb{Z} \oplus \mathbb{Z}_2$ was false. So we must instead have $KR^3(X ,\mathcal{G}) \cong \mathbb{Z}$. From this we deduce that $KR^*(X ,\mathcal{G})$ has the form
\begin{align*}
KR^0(X,\mathcal{G}) &\cong \mathbb{Z} \chi_0 &  KR^4(X,\mathcal{G}) &\cong \mathbb{Z}\chi_4 \\
KR^1(X,\mathcal{G}) &\cong 0 & KR^5(X,\mathcal{G}) &\cong 0 \\
KR^2(X,\mathcal{G}) &\cong \mathbb{Z}_2 \eta \chi_3 & KR^6(X,\mathcal{G}) &\cong \mathbb{Z}_2 \eta \chi_7 \\
KR^3(X,\mathcal{G}) &\cong \mathbb{Z} \chi_3 & KR^7(X,\mathcal{G}) &\cong \mathbb{Z} \chi_7
\end{align*}
for some classes $\chi_0, \chi_4, \chi_3, \chi_7$. By comparing the long exact sequences in $KR$ and $K$-theory for $(X , A)$ one find that the forgetful map $\varrho : KR^*(X , \mathcal{G}) \to K^*(X , \mathcal{G}) \cong K^*(X)$ is injective modulo torsion. Compatibility of the Fourier--Mukai transforms in $KR$ and $K$-theory then implies that $\Psi : KR^*(X , \mathcal{G}) \to KR^*(X , \mathcal{G})$ is an isomorphism in degrees $0,1,3,4,5,7$. In particular, $\Psi(\chi_3) = \pm \chi_3$ and $\Psi(\chi_7) = \pm \chi_7$. Hence $\Psi(\eta \chi_3) = \eta \Psi(\chi_3) = \eta \chi_3$ and similarly $\Psi(\eta \chi_7) = \eta \chi_7$. Thus $\Psi$ is an isomorphism.

{\bf Type $T_5$.} We have $X = \mathbb{R}^2/\mathbb{Z}^2$, $\sigma(x,y) = (y,x)$ and the gerbe is trivial. Consider the standard cellular structure on $X$ with one $0$-cell, two $1$-cells (swapped by $\sigma$) and one $2$-cell. Let $X_0 \subset X_1 \subset X_2 = X$ be the skeleta. Then $X_0 = \{ x_0\}$ is a point, $X_1 = S^1 \vee S^1$ where $\sigma$ swaps the circles and $X_2 = X$. We use $x_0$ as a basepoint and set $\widetilde{KR}^i(X) = KR^i(X,x_0)$. The long exact sequence for $(X , \{x_0\})$ splits, so $KR^i(X) \cong \widetilde{KR}^i(X) \oplus KR^i(pt)$. Now to compute the reduced $KR$-theory of $X$ we consider the long exact sequence of the triple $(X_2 , X_1 , X_0)$:
\[
\cdots \buildrel \delta \over \longrightarrow KR^*(X , X_1) \to \widetilde{KR}^*(X) \to KR^*(X_1 , X_0) \buildrel \delta \over \longrightarrow \cdots
\]
By the Thom isomorphism, we have
\[
KR^*(X,X_1) \cong \widetilde{KR}^*(X/X_1) \cong \widetilde{KR}^*( (\mathbb{R} \oplus \mathbb{R}_-)^+ ) \cong R^* \tau,
\]
where $deg(\tau) = 0$. We also have 
\[
KR^*(X_1,X_0) = \widetilde{KR}^*( S^1 \vee S^1 , x_0) \cong \widetilde{K}^*(S^1) \cong R^*_K \mu
\]
where $deg(\mu) = 1$. So the long exact sequence can be re-written as
\[
\cdots \buildrel \delta \over \longrightarrow R^* \tau \to \widetilde{KR}^*(X) \to R^*_K \mu \buildrel \delta \over \longrightarrow \cdots
\]
Carrying out a similar computation in $K$-theory yields a long exact sequence
\[
\cdots\buildrel \delta \over \longrightarrow R^*_K \tau_K \to K^*(X) \to R^*_K \gamma_1 \oplus R^*_K \gamma_2 \buildrel \delta \over \longrightarrow \cdots
\]
where $deg(\tau_K) = 2$, $deg(\gamma_1) = \deg(\gamma_2) = 1$. However in this case we know $K^*(X)$ is isomorphic to $\mathbb{Z}$ in even degrees and $\mathbb{Z}^2$ in odd degrees, so the connecting homomorphisms $\delta$ must be zero. The forgetful map $\varrho$ from $KR$ to $K$-theory yields a commutative diagram
\[
\xymatrix{
\cdots \ar[r]^-{\delta} & R^* \tau \ar[r] \ar[d]^-{\varrho} & \widetilde{KR}^*(X) \ar[r] \ar[d]^-{\varrho} & R^*_K \mu \ar[r]^-{\delta} \ar[d]^-{\varrho} & \cdots \\
\cdots \ar[r]^-{0} & R_K^* \tau_K \ar[r] & \widetilde{K}^*(X) \ar[r] & R^*_K \gamma_1 \oplus R^*_K \gamma_2 \ar[r]^-{0} & \cdots
}
\]
Since $\varrho(\tau) = \xi \tau_K$, $\varrho(\mu) = \gamma_1 - \gamma_2$ (for suitably chosen generators $\tau_K,\gamma_1,\gamma_2$) one deduces that the coboundary maps $\delta$ are zero in $KR$-theory and thus we have a short exact sequence
\[
0 \to R^* \tau \to \widetilde{KR}^*(X) \to R^*_K \mu \to 0.
\]
Furthermore, since $R^*_K$ has no torsion, the short exact sequence splits (as abelian groups), giving
\begin{align*}
KR^0(X) &\cong \mathbb{Z}1 \oplus \mathbb{Z}\tau &  KR^4(X) &\cong \mathbb{Z}h \oplus \mathbb{Z}h\tau \\
KR^1(X) &\cong \mathbb{Z}\mu_1 & KR^5(X) &\cong \mathbb{Z}\mu_5 \\
KR^2(X) &\cong 0 & KR^6(X) &\cong \mathbb{Z}_2 \eta^2 \oplus \mathbb{Z}_2 \eta^2 \tau \\
KR^3(X) &\cong \mathbb{Z} \mu_3 & KR^7(X) &\cong \mathbb{Z}_2 \eta \oplus \mathbb{Z}_2 \eta \tau \oplus \mathbb{Z}\mu_7
\end{align*}
for some classes $\mu_1,\mu_3,\mu_5,\mu_7,\tau$. The action of $R^*$ on $1,\tau$ is evident from the generators given above. The action on the $\mu_i$ is less straightforward. Comparing with the complex $K$-theory of $X$, we see that the forgetful map $\varrho : KR^*(X) \to K^*(X)$ is injective modulo torsion. The Real Fourier--Mukai transform $\Psi : KR^*(X) \to KR^*(X)$ is a group homomorphism to sends torsion to torsion. So we get a commutative diagram
\[
\xymatrix{
0 \ar[r] & Tors( KR^*(X) ) \ar[d]^-{\Psi_{tors}} \ar[r] & KR^*(X) \ar[r] \ar[d]^-{\Psi} & KR^*(X)_{tf} \ar[r] \ar[d]^-{\Psi_{tf}} & 0 \\
0 \ar[r] & Tors( KR^*(X) ) \ar[r] & KR^*(X) \ar[r] & KR^*(X)_{tf} \ar[r] & 0
}
\]
where $Tors(KR^*(X))$ is the torsion subgroup of $KR^*(X)$ and $KR^*(X)_{tf}$ is the torsion-free quotient. Compatibility of $\Psi$ and $\Psi_K$ together with injectivity of $\varrho : KR^*(X)_{tf} \to K^*(X)$ implies that $\Psi_{tf}$ is an isomorphism. To show that $\Psi$ is an isomorphism, it remains only to show that $\Psi_{tors}$ is an isomorphism. Let $a,b,c,d \in \mathbb{Z}$ be such that $\Psi(1) = a + b\tau$, $\Psi(\tau) = c + d\tau$. Since $\Psi_{tf}$ is an isomorphism, it follows that $\Psi : KR^0(X) \to KR^0(X)$ is invertible and so $ad-bc = \pm 1$. Now $Tors(KR^*(X))$ is concentrated in degrees $6,7$. $Tors(KR^7(X)) = \mathbb{Z}_2 \eta \oplus \mathbb{Z}_2 \eta \tau$, $Tors(KR^6(X)) = \mathbb{Z}_2 \eta^2 \oplus \mathbb{Z}_2 \eta^2 \tau$. We have $\Psi(\eta) = \eta \Psi(1) = a\eta + b \eta \tau$, $\Psi(\eta \tau) = \eta \Psi(\tau) = c \eta + d \eta \tau$. Since $ad-bc = \pm 1$, we have that $\Psi_{tors} : Tors( KR^7(X)) \to Tors( KR^7(X))$ is invertible. The same follows for $\Psi_{tors}$ in degree $6$. Thus $\Psi_{tors}$ is an isomorphism and hence $\Psi$ itself is an isomorphism.

\section{The index of Real Dirac operators}\label{sec:indr}

In this section we consider an application of $KR$-theory and the Real Fourier--Mukai transform to the index theory of Real Dirac operators. We introduce the notion of Real spin$^c$-structures of type $k$ and show that the index of a spin$^c$-Dirac operator lifts from complex $K$-theory to Real $K$-theory in the presence of such a structure. We also study the families index for families of Real Dirac operators obtained by coupling to line bundles. We show in such cases the families index is given by the Real Fourier--Mukai transform.

Let $X$ be a topological space with a Real structure $\sigma$. Let $V \to X$ be a real rank $n$ orthogonal vector bundle. Suppose an involutive lift of $\sigma$ to $V$ is given. Let $P \to X$ denote the principal $O(n)$-frame bundle of $V$. The the action of $\sigma$ on $V$ induces an involution $\sigma : P \to P$ satisfying $\sigma(pg) = \sigma(p)g$ for all $p \in P$, $g \in O(n)$. An orientation on $V$ is equivalent to a reduction of structure of $P$ to a principal $SO(n)$-bundle. Alternatively an orientation can be viewed as a continuous function $\epsilon_P : P \to \mathbb{Z}/2\mathbb{Z}$ satisfying $\epsilon_P(pg) = \epsilon_P(p) + \epsilon(g)$, for all $p \in P$, $g \in O(n)$, where $\epsilon : O(n) \to \mathbb{Z}_2$ is the determinant homomorphism. This is the point of view that we will use. A $Pin^c$-structure on $V$ is a lift $\widetilde{P} \to P$ of the frame bundle of $V$ to a principal $Pin^c(n)$-bundle $\widetilde{P}$. A $Spin^c$-structure on $V$ may then be defined as a pair $( \widetilde{P} , \epsilon_P)$, where $\widetilde{P}$ is a $Pin^c$-structure and $\epsilon_P$ is an orientation.

\begin{definition}\label{def:spinck}
Let $X$ be a topological space with a Real structure $\sigma$. Let $V \to X$ be a real rank $n$ orthogonal vector bundle. Suppose an involutive lift of $\sigma$ to $V$ is given. A {\em Real spin$^c$-structure} of type $k \in \mathbb{Z}/4\mathbb{Z}$ consists of: a $Spin^c$-structure $( \widetilde{P} , \epsilon_P)$ and a lift $\widetilde{\sigma} : \widetilde{P} \to \widetilde{P}$ of $\sigma$ to a principal bundle automorphism of $\widetilde{P}$ satisfying:
\begin{itemize}
\item[(1)]{$\epsilon_P( \sigma(p) ) = \epsilon_P( \sigma(p) ) + k \; ({\rm mod} \; 2)$, that is, $\sigma : V \to V$ is orientation preserving or reversing according to the parity of $k$, and}
\item[(2)]{$\widetilde{\sigma}^2 = (-1)^{ \binom{k+1}{2} }$.}
\end{itemize}

\end{definition}

\begin{remark}\label{rem:trivk}
The trivial vector bundle $\mathbb{R}_-^k \to \{pt\}$ has a canonical Real spin$^c$-structure of type $k$. Indeed, the frame bundle is $P = O(k)$. The orientation is $\epsilon_P(g) = \epsilon(g) = det(g)$. The lift to $Pin^c(k)$ is $\widetilde{P} = Pin^c(k)$. The induced action of $\sigma$ on $P$ is $\sigma(p) = up$, where $u \in O(k)$ is given by $u = diag(-1,-1,\dots , -1)$. If $e_1, \dots , e_k$ denotes the standard basis of $\mathbb{R}^k$, then a lift of $u$ to $Pin^c(k)$ is given by $\widetilde{u} = e_1 e_2 \cdots e_k$. Then $\widetilde{\sigma} : \widetilde{P} \to \widetilde{P}$ may be taken to be $\widetilde{\sigma}(p) = \widetilde{u}p$. Observe that $\epsilon_P( \sigma(p) ) = \epsilon_P( up) = \epsilon(u) + \epsilon(p) = \epsilon(p) + k$ and $\widetilde{\sigma}^2 = \widetilde{u}^2 = (-1)^{\binom{k+1}{2}}$.
\end{remark}

The significance of Definition \ref{def:spinck} is given by the following:
\begin{proposition}\label{prop:realspinck}
Let $X$ be a topological space with a Real structure $\sigma$. Let $V \to X$ be a real rank $n$ orthogonal vector bundle. Suppose an involutive lift of $\sigma$ to $V$ is given. Let $(\widetilde{P} , \epsilon_P , \widetilde{\sigma})$ be a Real spin$^c$-structure of type $k$. Then $(\widetilde{P} , \epsilon_P , \widetilde{\sigma})$ determines an isomorphism of lifting gerbes $L(V) \cong L( \mathbb{R}_-^k )$.
\end{proposition}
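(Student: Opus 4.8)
The plan is to reduce to the case of a genuine Real spin$^c$-structure (type $k=0$), for which Section~\ref{sec:lg} already produces a canonical trivialisation of the lifting gerbe. The two ingredients are additivity of the type-$k$ condition under direct sums, and the canonical type-$k$ structure on the trivial bundle $\mathbb{R}_-^k$ supplied by Remark~\ref{rem:trivk}. Throughout I write $\mathbb{R}_-^j$ also for its pullback to $X$, so that $L(\mathbb{R}_-^j)$ denotes the pullback of the point gerbe $\mathcal{U}^j$.

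I would first prove an additivity lemma: if $V_1,V_2$ are Real orthogonal bundles over $X$ carrying Real spin$^c$-structures of types $k_1,k_2$, then $V_1\oplus V_2$ carries a canonical Real spin$^c$-structure of type $k_1+k_2$, and under the isomorphism of graded Real gerbes $L(V_1\oplus V_2)\cong L(V_1)\otimes L(V_2)$ of Section~\ref{sec:lg} these structures correspond. The orientation of $V_1\oplus V_2$ is the sum of the two orientations (determinants multiply), so condition~(1) of Definition~\ref{def:spinck} holds with $k=k_1+k_2$. For the $Pin^c$-lift one takes the graded tensor product $\widetilde P_1\otimes\widetilde P_2$ sitting inside the $Pin^c(n_1+n_2)$-frame bundle of $V_1\oplus V_2$ --- this is exactly the reduction of structure used in Section~\ref{sec:lg} --- together with the lift $\widetilde\sigma_1\otimes\widetilde\sigma_2$. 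Condition~(2) is then a sign computation in the group $Pin^c(n_1)\otimes Pin^c(n_2)$, whose multiplication carries the Clifford sign $(-1)^{\widetilde\epsilon(h_1')\widetilde\epsilon(h_2)}$: writing $\widetilde\sigma_i$ locally as left translation by a group element of degree $k_i\bmod 2$ followed by conjugation, one finds $(\widetilde\sigma_1\otimes\widetilde\sigma_2)^2=(-1)^{k_1k_2}\,\widetilde\sigma_1^2\,\widetilde\sigma_2^2=(-1)^{k_1k_2+\binom{k_1+1}{2}+\binom{k_2+1}{2}}$, and the elementary identity $\binom{k_1+k_2+1}{2}\equiv k_1k_2+\binom{k_1+1}{2}+\binom{k_2+1}{2}\pmod 2$ shows this equals $(-1)^{\binom{k_1+k_2+1}{2}}$. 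In particular both conditions of Definition~\ref{def:spinck} depend only on $k\bmod 4$, so type $k$ and type $k+4$ coincide.

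I would also record the trivial observation that a type-$0$ Real spin$^c$-structure on a bundle $W$ is exactly the data $(\widetilde P,\epsilon_P,\widetilde\sigma)$ with $\widetilde\sigma^2=1$ and $\sigma$-invariant orientation appearing in Section~\ref{sec:lg} (here $\binom{0+1}{2}=0$ and condition~(1) reads $\epsilon_P\circ\sigma=\epsilon_P$), and hence canonically trivialises $L(W)$. The proposition now follows. Equip the trivial bundle $\mathbb{R}_-^{4-k}$ over $X$ with the canonical type-$(4-k)$ structure of Remark~\ref{rem:trivk}. By the additivity lemma $V\oplus\mathbb{R}_-^{4-k}$ carries a structure of type $k+(4-k)=4$, which by the mod $4$ periodicity is a type-$0$ structure; this trivialises $L(V\oplus\mathbb{R}_-^{4-k})\cong L(V)\otimes L(\mathbb{R}_-^{4-k})$, so $L(V)\otimes L(\mathbb{R}_-^{4-k})\cong 1$. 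Tensoring with $L(\mathbb{R}_-^k)$ and using the canonical isomorphism $L(\mathbb{R}_-^{4-k})\otimes L(\mathbb{R}_-^{k})\cong L(\mathbb{R}_-^{4})\cong 1$ --- which comes from $\mathbb{R}_-^{4-k}\oplus\mathbb{R}_-^{k}\cong\mathbb{R}_-^4$ together with the canonical type-$0$ structure on $\mathbb{R}_-^4$, equivalently from $\mathcal{U}^4\cong 1$ in $\pi_0 GrGb_R(pt)$ --- yields a canonical isomorphism $L(V)\cong L(\mathbb{R}_-^k)$.

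The main obstacle is the additivity lemma, specifically the sign bookkeeping for the graded tensor product of $Pin^c$ groups: one must check that the Clifford sign $(-1)^{\widetilde\epsilon(h_1')\widetilde\epsilon(h_2)}$ combines with the two individual identities $\widetilde\sigma_i^2=(-1)^{\binom{k_i+1}{2}}$ to produce precisely $\widetilde\sigma^2=(-1)^{\binom{k_1+k_2+1}{2}}$ and not some other sign, and that the resulting datum is compatible with the direct-sum isomorphism of lifting gerbes already established in Section~\ref{sec:lg}. Alternatively one can build the isomorphism $L(V)\to L(\mathbb{R}_-^k)$ directly by the device used in Section~\ref{sec:lg} to trivialise $L(P)$, now allowing the grading datum to be $\mathbb{Z}_-$-twisted and the lift to square to $(-1)^{\binom{k+1}{2}}$; this is more explicit but needs essentially the same sign analysis. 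Everything else is routine verification of the kind already carried out in Section~\ref{sec:lg}.
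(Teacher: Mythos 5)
Your argument is correct but takes a genuinely different route from the paper's. The paper introduces an explicit point gerbe $\mathcal{U}_k$ whose graded Dixmier--Douady class is determined by $\lambda = k \bmod 2$ and $\mu = \binom{k+1}{2} \bmod 2$, and constructs the isomorphism $L(V) \to \mathcal{U}_k$ by hand: on a two-fold cover $Z = P \times \{0,1\}$ of the frame bundle it builds a graded Real line bundle $N$ (two copies of $\widetilde{P}$ with grading $\epsilon_P + \lambda i$ and Real structure $(-1)^{\mu i}\widetilde{\sigma}$) and a strict isomorphism $\varphi_{ij}$ carrying explicit signs $(-1)^{\lambda\varepsilon(h)i}\chi_{ij}$ with $\chi_{01} = e^{\pi i \mu/2}$, then verifies compatibility with the Real structures and multiplications. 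Applying the same recipe to $\mathbb{R}_-^k$ via Remark \ref{rem:trivk} and composing gives $L(V) \cong L(\mathbb{R}_-^k)$. You instead prove that the type is additive under direct sums and use stabilisation by $\mathbb{R}_-^{4-k}$ to reduce to type $0$, which is precisely the trivialisation set-up of \textsection\ref{sec:lg}. Your sign identity $\binom{k_1+k_2+1}{2} \equiv k_1 k_2 + \binom{k_1+1}{2} + \binom{k_2+1}{2} \pmod{2}$ (in fact an integer equality) is correct, and the local computation $(\widetilde{\sigma}_1 \otimes \widetilde{\sigma}_2)^2 = (-1)^{k_1 k_2}\widetilde{\sigma}_1^2\widetilde{\sigma}_2^2$ in the graded product $Pin^c(n_1)\otimes Pin^c(n_2)$ is exactly the Clifford-sign check you flagged; it does work out, so the additivity lemma holds. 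Your approach is more modular: the additivity lemma is a structural fact of independent interest and makes the mod-$4$ periodicity of the type transparent, whereas the paper's explicit $\mathcal{U}_k$-construction is self-contained but opaque. Both end up performing essentially the same sign bookkeeping in graded $Pin^c$ groups, just packaged differently; both produce a canonical isomorphism, so the proposal is a valid alternative proof.
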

\begin{proof}
First recall the definition of the lifting gerbe $L(V)$. We take $Y = P$, the $O(n)$-frame bundle of $V$, we take the circle bundle $C \to Y^{[2]}$ to be $C = P \times Pin^c(n)$, which maps to $Y^{[2]}$ via $(p,h) \mapsto (p , p \pi(h))$ (where $\pi : Pin^c(n) \to O(n)$ is the projection) and $\lambda : C_{12} \otimes C_{23} \to C_{13}$ is given by $\lambda( (p_1,h_1) , (p_2 , h_2) ) = (p_1 , h_1 h_2)$. The grading on $C$ is given by $\epsilon( p,h ) = \widetilde{\epsilon}(h)$ (where $\widetilde{\epsilon}(h) = \epsilon( \pi(h) ) = det( \pi(h))$). The Real structure on $C$ is given by $\sigma(p,h) = (\sigma(p) , c(h) )$ (where $c$ is the involution on $Pin^c(n)$ which covers the identity on $O(n)$ and acts as complex conjugation on $S^1$).

It will be convenient to define $\lambda , \mu \in \mathbb{Z}/2\mathbb{Z}$ by taking $\lambda = k \; ({\rm mod} \; 2)$ and $\mu = \binom{k+1}{2} \; ({\rm mod} \; 2)$. Let $e_{ij} \in \mathbb{Z}/2\mathbb{Z}$ be defined by $e_{ij} = 0$ if $i = j$, $e_{ij} = 1$ if $i \neq j$. Let $m_{ijk} \in S^1$ be defined by $m_{010} = m_{101} = -1$, $m_{ijk} = 1$ otherwise. Then $[e_{ij}], [m_{ijk}]$ are generators for $H^1_{\mathbb{Z}_2}(pt ; \mathbb{Z}_2)$ and $H^3_{\mathbb{Z}_2}(pt ; \mathbb{Z}_-)$. Consider a point gerbe $\mathcal{U}_k$ with graded Dixmier--Douady class equal to $( \lambda e_{ij} , m_{ijk}^{\mu} )$. Specifically, we take $\mathcal{U}_k$ to be the gerbe defined with respect to the cover $\{0,1\} \to \{pt\}$ with circle bundle $C(\mathcal{U}) = \bigcup_{i,j} C(\mathcal{U})_{ij}$, $C(\mathcal{U})_{ij} = S^1$, grading $\epsilon = \lambda e_{ij}$ on $C(\mathcal{U})_{ij}$, Real structure $\sigma(z) = \overline{z}$ for $z \in C(\mathcal{U})_{ij}$ and multiplication $\lambda_{ijk}( z_1 , z_2 ) = m_{ijk}^{\mu} z_1 z_2$.

From the data $(\widetilde{P} , \epsilon_P , \widetilde{\sigma})$, we will construct a canonical isomorphism $L(V) \to \mathcal{U}_k$. In light of Remark \ref{rem:trivk}, the same construction applied to $\mathbb{R}_-^k$ in place of $V$ gives a canonical isomorphism $L(\mathbb{R}_-^k) \to \mathcal{U}_k$. Combining these isomorphisms, we get a canonical isomorphism $L(V) \to L(\mathbb{R}_-^k)$, as claimed.

To define the isomorphism $L(V) \to \mathcal{U}_k$, we must first take the pullbacks of $L(V)$ and $\mathcal{U}_k$ to the common cover $Z = Y \times \{0,1\} = Y_0 \cup Y_1$, where $Y_i = Y \times \{i\} = P \times \{i\}$. Note that $Z^{[2]}$ is four copies of $Y^{[2]}$ indexed by pairs $i,j \in \{0,1\}$. We write the components as $Y_{ij}^{[2]}$.

On $Z = Y_0 \cup Y_1$ define a Real circle bundle $N \to Z$ by taking $N_0 = N_1 = \widetilde{P}$. We give $N$ a grading by declaring the grading on $N_i$ to be $\epsilon_P + \lambda i$. We define a Real structure $\sigma_N$ on $N$ by letting $\sigma_N : N_0 \to N_1$ equal $\widetilde{\sigma}$ and letting $\sigma_N : N_1 \to N_0$ be $(-1)^\mu \widetilde{\sigma}$. Since $\widetilde{\sigma}^2 = (-1)^\mu$, we have $\sigma_N^2 = 1$, so $\sigma_N$ makes $N$ into a Real line bundle. Moreover the grading on $N$ is defined in such a way that $\sigma_N$ is grading preserving, so $(N , \sigma_N)$ is a graded Real line bundle.

We will construct a strict isomorphism $\varphi$ from $(\delta N) \otimes \mathcal{U}_k$ to $L(V)$ (where $\mathcal{U}_k$ and $L(V)$ are pulled back to $Z$). Then $(N , \varphi)$ defines an isomorphism $\mathcal{U}_k \to L(V)$. First note that
\[
(\delta N)_{ij} = \{ (u_1,u_2) \in \widetilde{P} \times \widetilde{P} \; | u_2 = u_1 h \text{ for some } h \in \widetilde{G} \}/\!\sim
\]
where $(u_1,u_2) \sim (u'_1 , u'_2)$ if $(u'_1 , u'_2) = (u_1 z , u_2 z)$ for some $z \in S^1$.

Since $u_1$ and $h$ completely determine $u_2$ via $u_2 = u_1 h$, we can identify $(\delta N)_{ij}$ with pairs $(u_1 , h) \in \widetilde{P} \times \widetilde{G}$ modulo $(u_1 , h) \sim (u_1 z , h)$, $z \in S^1$. Thus $(\delta N)_{ij} \cong P \times \widetilde{G} = L_{ij}$. So we have a manifest isomorphism of circle bundles $(\delta N)_{ij} = L_{ij}$. However this does not define an isomorphism of gerbes $(\delta N) \otimes \mathcal{U} \to L$ because it does not respect the Real structures or multiplication.

The Real structure on $(\delta N)_{ij}$ is given by $\sigma( u_1 , u_2 ) = ( (-1)^{\mu i} \widetilde{\sigma}(u_1) , (-1)^{\mu j} \widetilde{\sigma}(u_2) )$. In terms of the identification $(\delta N)_{ij} = L_{ij} = P \times \widetilde{G}$, this is given by $\sigma( p , h ) = ( \sigma(p) , (-1)^{\mu(i+j)}c(h)  )$.

The multiplication
\[
\left( (\delta N)_{ij} \otimes \mathcal{U}_{ij} \right) \otimes \left( (\delta N)_{jk} \otimes \mathcal{U}_{jk} \right) \to (\delta N)_{ik} \otimes \mathcal{U}_{ik}
\]
is given as follows (where $p_2 = p_1h_1$, $p_3 = p_2 h_2$):
\begin{align*}
& \left( ( p_1 , h_1 ) \otimes (1)_{ij} \right) \otimes \left( (p_2 , h_2 ) \otimes (1)_{jk} \right)  \\
& \quad = (-1)^{(\varepsilon(h_2) + \lambda e_{jk}) \lambda e_{ij}} (p_1 , h_1) \otimes (p_2 , h_2 ) \otimes (1)_{ij} \otimes (1)_{jk} \\
& \quad \mapsto (-1)^{(\varepsilon(h_2) + \lambda e_{jk}) \lambda e_{ij}} m_{ijk}^{\mu} (p_1 , h_1 h_2).
\end{align*}

Now, we define the map $\varphi_{ij} : (\delta N)_{ij} \otimes \mathcal{U}_{ij} \to L_{ij}$ to be:
\[
\varphi_{ij}(  (p , h) \otimes (1)_{ij} ) = (p , (-1)^{\lambda \varepsilon(h) i }  \chi_{ij} h)
\]
where we set $\chi_{00} = \chi_{11} = 1$, $\chi_{01} = \chi_{10} = e^{\pi i\mu/2}$ (to make sense of $e^{\pi i \mu/2}$, we regard $\mu$ as an element of $\{0,1\}$ rather than as an integer mod $2$). One finds that with this choice $\varphi_{ij}$ respects Real structures.

With these definitions, it is straightforward to check that $\varphi$ defines a strict isomorphism from $(\delta N) \otimes \mathcal{U}_k$ to $L(V)$, as claimed. The isomorphism is canonical since it depends only on $(\widetilde{P} , \epsilon_P , \widetilde{\sigma})$ and no additional choices were made in the construction.
\end{proof}

If $X$ is a smooth $n$-manifold and $\sigma_X$ is a smooth Real structure on $X$, then we define a {\em Real spin$^c$-structure on $X$ of type $k$} to be a Real spin$^c$-structure of type $k$ on the tangent bundle $TX$. Let $X,Y$ be smooth, compact manifolds of dimensions $n$ and $m$. Let $\sigma_X, \sigma_Y$ be smooth Real structures on $X,Y$ and let $f : X \to Y$ be an equivariant smooth map from $X$ to $Y$ ($ f \circ \sigma_X = \sigma_Y \circ f$). Then we have a pushforward map
\[
f_* : KR^j( X , L(TX) \otimes f^*(\mathcal{G}_Y) ) \to KR^{j-n+m}(Y , L(TY) \otimes \mathcal{G}_Y )
\]
where $\mathcal{G}_Y$ is any graded Real gerbe on $Y$. Suppose now that $X,Y$ have Real spin$^c$-structrues of types $k_X,k_Y \in \mathbb{Z}/4\mathbb{Z}$. Hence by Proposition \ref{prop:realspinck}, we have canonical isomorphisms $L(TX) \cong L(\mathbb{R}_-^{k_X})$, $L(TY) \cong L(\mathbb{R}_-^{k_Y})$. As explained in Section \ref{sec:tkr}, twisting by $L( \mathbb{R}_-^k)$ has the effect on twisted $KR$-theory as a grading shift by $-2k$. Hence in this case, the pushforward can be regarded as a map
\[
f_* : KR^{j}( X , f^*(\mathcal{G}_Y) ) \to KR^{j-n+m+2k_X-2k_Y}(Y , \mathcal{G}_Y ).
\]
In particular, if $\mathcal{G}_Y$ is taken to be trivial, we get
\[
f_* : KR^{j}( X ) \to KR^{j-n+m+2k_X-2k_Y}(Y ).
\]
In the case $Y = \{pt \}$ is a point, we get an index map for twisted Real spin$^c$-structures:
\[
ind = f_* : KR^j(X) \to KO^{j-n+2k_X}(pt).
\]
Notably, $f_*(1) \in KO^{2k_X-n}(pt)$ is the index of the Dirac operator associated to the spin$^c$-structure. Under the forgetful map $KO^{2k_X-n}(pt) \to K^{2k_X-n}(pt) = K^{-n}(pt)$, $f_*(1)$ equals the usual index $ind(D) \in \mathbb{Z}$ of the Dirac operator associated to the spin$^c$-structure (which is zero if $n$ is odd). From this, we obtain the following observations:
\begin{proposition}
Let $X$ be a compact smooth $n$-manifold with smooth Real structure $\sigma_X$. Let $\mathfrak{s}$ be a spin$^c$-structure and let $ind(D) \in \mathbb{Z}$ denote the index of the spin$^c$-Dirac operator associated to $\mathfrak{s}$. Suppose that $\mathfrak{s}$ admits the structure of a Real spin$^c$-structure of type $k \in \mathbb{Z}/4\mathbb{Z}$.
\begin{itemize}
\item[(1)]{If $n = 2k+4 \; ({\rm mod} \; 8)$ then $ind(D)$ is even.}
\item[(2)]{If $n = 2k \pm 2 \; ({\rm mod} \; 8)$ then $ind(D) = 0$.}
\end{itemize}
\end{proposition}
\begin{proof}
We have seen that $ind(D) \in K^{-n}(pt)$ admits a lift to a class in $KO^{2k-n}(pt)$. If $n = 2k + 4 \; ({\rm mod} \; 8)$, then $ind(D)$ is even, because the image of $KO^4(pt) \to K^0(pt)$ is $2\mathbb{Z}$. If $n = 2k \pm 2 \; ({\rm mod} \; 8)$, then $ind(D) = 0$, because the map $KO^{2k-n}(pt) = KO^{\pm 2}(pt) \to K^{0}(pt)$ is zero.
\end{proof}

\begin{remark}
If $n = 2k+1$ or $2k+2$ mod $8$, then we have a mod $2$ index $f_*(1) \in KO^{2k-n}(pt) \cong \mathbb{Z}_2$ which is not detected by complex $K$-theory.
\end{remark}

In what follows we will be intetested in the index not of a single Dirac operator $D$, but of a family $\{D_L\}_{L}$ of Dirac operators parametrised by the Jacobian torus $Jac(X) = H^1(X ; \mathbb{R})/H^1(X ; \mathbb{Z})$. The resulting index will be a class in the $KR$-theory of $Jac(X)$.

Let $X$ be a compact, oriented smooth $n$-manifold with smooth Real structure $\sigma_X$. Let $\mathfrak{s}$ be a spin$^c$-structure on $X$ and suppose that $\mathfrak{s}$ admits the structure of a Real spin$^c$-structure of type $k$. Let $D$ denote the spin$^c$-Dirac operator associated to $\mathfrak{s}$. Let $Jac(X) = T_X = V/\Lambda$ be the Jacobian of $X$, where $V = H^1(X ; \mathbb{R})$, $\Lambda = H^1(X ; \mathbb{Z})$. We define the {\em Albanese torus} $Alb(X) = A_X$ of $X$ to be the dual torus
\[
A_X = \frac{ Hom( H^1(X ; \mathbb{R}) , \mathbb{R}) }{Hom( H^1(X ; \mathbb{Z}) , \mathbb{Z})} = V^*/\Lambda^*.
\]
Define the {\em Albanese map} $a : X \to A_X$ as follows. Choose a basepoint $x_0 \in X$ and a $b_1(X)$-dimensional subspace $\widetilde{V} \subset \Omega^1_{cl}(X)$ of closed $1$-forms such that the projection $\widetilde{V} \to V = H^1(X ; \mathbb{R})$ to cohomology is an isomorphism. For $x \in X$, $a(x) \in A_X$ is defined as follows. Choose a smooth path $\gamma$ from $x_0$ to $x$. Then we get an element $\int_{\gamma} \in V^* = Hom( V , \mathbb{R})$ defined by $[\lambda] \mapsto \int_{\gamma} \lambda$, where $\lambda \in \widetilde{V}$ is the unique element of $\widetilde{V}$ representing $[\lambda] \in V$. Different choices of path from $x_0$ to $x$ differ by a class in $H_1(X ; \mathbb{Z})$, hence if $\gamma_1,\gamma_2$ are two such paths then $\int_{\gamma_1} - \int_{\gamma_2}$ belongs to $\Lambda^* \subset V^*$. Thus the image of $\int_{\gamma} \in V^*/\Lambda^* = A_X$ depends only on the endpoints $x_0,x$ and not on the choice of path $\gamma$. We set $a(x) = \int_{\gamma} \; ({\rm mod} \; \Lambda^*)$ where $\gamma$ is a path from $x_0$ to $x$. The Albanese map $a : X \to A_X$ depends on the choice of lift $\widetilde{V}$ of $V$ and also on the choice of basepoint $x_0$, but these choices do not change the homotopy class of $a$.

Over $X \times Jac(X)$ we have a line bundle with connection $\mathcal{P}_X \to X \times Jac(X)$ with the property that for each $[L] \in Jac(X)$, $\mathcal{P}_X |_{X \times \{[L]\}}$ is isomorphic to $L$. To be specific, we will take $\mathcal{P}_X = ( a \times id_{Jac(X)} )^*( \mathcal{P} )$, where $\mathcal{P} \to A_X \times T_X$ is the Poincar\'e line bundle. By coupling the Dirac operator $D$ to the line bundle $\mathcal{P}_X$, we get a family of Dirac operators over $T_X$. Taking the families index of this family gives a class $ind(D) \in K^{-n}( T_X )$. Letting $\pi : X \times T_X \to T_X$ denote the projection to the second factor, it follows that $ind(D) = \pi_*( \mathcal{P}_X )$. Note that the spin$^c$-structure on $X$ determines a spin$^c$-structure on the fibres of $\pi : X \times T_X \to T_X$ and hence the push-forward $K^0(X \times T_X) \to K^{-n}(T_X)$ is defined.

Next we wish to promote $ind(D) \in K^{-n}(T_X)$ to a class in $KR$-theory. The involution $\sigma_X$ induces an involution $(\sigma_X)_*$ on $H_1(X ; \mathbb{Z})$ and hence also on the torsion-free quotient $H^1(X ; \mathbb{Z})_{tf}$. Noting that $\Lambda^* = Hom(H^1(X ; \mathbb{Z}) , \mathbb{Z}) \cong H_1(X ; \mathbb{Z})_{tf}$, we get an involution on $\Lambda^*$ denoted by $\sigma_*$, hence also an involution $\sigma_*$ on $V^* = \Lambda^* \otimes_{\mathbb{Z}} \mathbb{R}$ and an involution $\sigma_{A_X}$ on $A_X$. On $\Lambda = H^1(X ; \mathbb{Z})$ we take the involution $-(\sigma_X^*)$ which is {\em minus} the pullback of $\sigma_X$. This induces an involution $-\sigma^*$ on $V$ and an in turn an involution $\sigma_{T_X}$ on $T_X$. One motivation for using minus the pullback is that if $T_X = Jac(X)$ is identified with flat unitary line bundles, then minus the pullback corresponds to $L \mapsto \sigma^*( \overline{L})$. In the case $X$ is a complex manifold and $\sigma_X$ is anti-holomorphic, this is the natural action of $\sigma$ on holomorphic line bundles. Another motivation for using minus the pullback is that it makes $(T_X , \sigma_{T_X}) , (A_X , \sigma_{A_X})$ into a Real $T$-dual pair. Lastly and perhaps most crucially, the minus sign makes the Poincar\'e line bundle $\mathcal{P} \to A_X \times T_X$ into a Real line bundle. Following \cite[\textsection 5.2]{bar4} (see also \cite{fk}), we can explicitly construct $\mathcal{P}$ as follows. Take the trivial line bundle $\widetilde{\mathcal{P}} = V^* \times V \times \times \mathbb{C}$ on $V^* \times V$ and let $\Lambda^* \times \Lambda$ act on $\widetilde{P}$ by
\[
(\mu , \lambda) \cdot ( w , v , z) = (w + \mu , v + \lambda , e^{2\pi i \langle \mu , v \rangle } z).
\]
Then $\mathcal{P}$ may be defined as the quotient $\mathcal{P} = \widetilde{\mathcal{P}}/(\Lambda^* \times \Lambda)$. The Real structure on $\mathcal{P}$ is induced the the Real structure on $\widetilde{\mathcal{P}}$ given by
\[
\sigma( w , v , z ) = ( \sigma_*(w) , -\sigma^*(v) , \overline{z}).
\]
Note that the presence of the minus sign is needed to ensure that the Real structure commutes with the action of $\Lambda^* \times \Lambda$. The Real Poincar\'e line bundle $\mathcal{P}$, regarded as an isomorphism of trivial gerbes clearly satisfies the Poincar\'e property (indeed the Poincar\'e property was defined precisely so that this happens) and therefore associated to $\mathcal{P}$ is a Fourier--Mukai transform $\Phi : KR^*(A_X , L(A_X) ) \to KR^{* - b_1(X)}(T_X)$. As shown below, the twist by $L(A_X)$ will simply result in a grading shift.

We will assume that the action of $\sigma_X$ on $X$ is not free and we will take $x_0$ to be a fixed point of $\sigma_X$. We will also choose the subspace $\widetilde{V}$ to be $\sigma$-invariant (that is, $\sigma^*( \widetilde{V} ) = \widetilde{V}$). This can always be done, for example we could let $\widetilde{V}$ be the space of $g$-harmonic $1$-forms for a $\sigma$-invariant metric $g$. It then follows that the Albanese map $a : X \to A_X$ is equivariant in the sense that $a \circ \sigma_X = \sigma_{A_X} \circ a$. Then $\mathcal{P}_X = (a \times id_{T_X})^*(\mathcal{P})$ is a Real line bundle on $X$ and so defines a class $[\mathcal{P}_X] \in KR^0(X \times T_X)$. The projection $\pi : X \times T_X \to T_X$ is equivariant and the vertical tangent bundle has a Real spin$^c$-structure of type $k$ (since by assumption, the spin$^c$-structure $\mathfrak{s}$ admits a Real spin$^c$-structure of type $k$). Taking the families index of $D$ coupled to $\mathcal{P}_X$ then yields a lift of $ind(D) \in K^{-n}(T_X)$ to a class $ind_R(D) \in KR^{2k-n}(T_X)$ given by
\[
ind_R(D) = \pi_*( \mathcal{P}_X ) \in KR^{2k-n}(T_X).
\]
We call this the {\em Real families index of $D$}.

We note that the tangent bundle of $A_X$ is canonically isomorphic to $V^*$ and the tangent bundle of $T_X$ is canonically isomorphic to $V_- = V \otimes_{\mathbb{R}} \mathbb{R}_-$. Let $b = b_1(X)$ and let $b_+,b_-$ denote the dimensions of the $\pm 1$-eigenspaces of $\sigma^*$ on $H^1(X ; \mathbb{R})$. Then $V^* \cong \mathbb{R}^{b_+} \oplus \mathbb{R}^{b_-}_-$ and $V_- \cong \mathbb{R}^{b_+}_- \oplus \mathbb{R}^{b_-}$. In particular, the translation invariant spin$^c$-structure on $A_X$ has a Real structure of type $b_+$ and the translation invariant spin$^c$-structure on $T_X$ has a Real structure of type $b_-$. It follows that the pushforward with respect to the Albanese map $a : X \to A_X$ takes the form
\[
a_* : KR^*(X) \to KR^{*-n+b_+ - b_- +2k}(A_X)
\]
and the Real Fourier--Mukai transform for $(A_X,T_X , \mathcal{P})$ takes the form
\[
\Phi : KR^*(A_X) \to KR^{*-b_+ + b_-}(T_X).
\]

\begin{proposition}\label{prop:realdir}
We have
\[
ind_R(D) = \Phi( \alpha )
\]
where
\[
\Phi : KR^*( A_X ) \to KR^{* - b_+ + b_-}( T_X )
\]
is the Real Fourier--Mukai transform and
\[
\alpha = a_*(1) \in KR^{-n+b_+ - b_- + 2k}(A_X)
\]
is the pushforward of $1 \in KR^0(X)$ under the Albanese map $a : X \to A_X$.
\end{proposition}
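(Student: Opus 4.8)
The plan is to identify both sides directly as pushforwards from $X \times T_X$. Write $q : X \times T_X \to X$ and $\pi : X \times T_X \to T_X$ for the two projections, and $p : C = A_X \times T_X \to A_X$, $\widehat p : C \to T_X$ for the projections out of the correspondence space. By construction $ind_R(D) = \pi_*(\mathcal P_X)$ with $\mathcal P_X = (a \times id_{T_X})^*(\mathcal P)$, the pushforward using the Real spin$^c$-structure of type $k$ on the fibrewise tangent bundle of $\pi$. On the other side, since $\mathcal G$ and $\widehat{\mathcal G}$ are trivial and the gerbe isomorphism underlying $\Phi$ is the Real Poincar\'e line bundle $\mathcal P$ itself, the Real Fourier--Mukai transform is simply $\Phi(\beta) = \widehat p_*\big([\mathcal P]\cdot p^*(\beta)\big)$, where the twist by $L(V_\rho) = L(TA_X)$ has been absorbed into the grading shift recorded in the maps displayed before the statement (via Proposition \ref{prop:realspinck} applied to $TA_X$ and $TT_X$). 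Hence $\Phi(\alpha) = \widehat p_*\big([\mathcal P]\cdot p^*(a_*(1))\big)$.

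The first step is the geometric observation $\pi = \widehat p \circ (a \times id_{T_X})$. Combining the composition property (1) and the projection formula (2) of Section \ref{sec:tkr},
\[
ind_R(D) = \pi_*\big((a \times id_{T_X})^*(\mathcal P)\big) = \widehat p_*\Big((a \times id_{T_X})_*\big((a \times id_{T_X})^*([\mathcal P])\big)\Big) = \widehat p_*\big([\mathcal P]\cdot(a \times id_{T_X})_*(1)\big).
\]
Comparing with the formula for $\Phi(\alpha)$, the proof reduces to the base-change identity
\[
(a \times id_{T_X})_*(1) = p^*\big(a_*(1)\big) = p^*(\alpha).
\]

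To prove this identity, note that the square with horizontal maps $a : X \to A_X$, $a \times id_{T_X} : X \times T_X \to C$ and vertical maps $q$, $p$ is Cartesian, with $p$ a trivial fibre bundle. I would factor $a$ as an equivariant closed embedding $X \hookrightarrow A_X \times W$ into a Euclidean bundle (such $W$, a real orthogonal $\mathbb{Z}_2$-representation, exists because $X$ is compact with a smooth involution), so that $a_*$ is a Thom--Gysin map followed by a projection pushforward, and then invoke naturality of the Thom class under pullback together with base change for extension by zero --- these being the ingredients behind property (5) of Section \ref{sec:tkr}. Throughout one keeps track of the lifting-gerbe twists $L(TX)$, $L(TA_X)$, $L(TT_X)$; since $a \times id_{T_X}$ is the identity on the $T_X$-factor the $L(TT_X)$-twists on the two sides agree, and Proposition \ref{prop:realspinck} converts the remaining twists into precisely the grading shifts already built into the degrees in the statement. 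I expect this base-change step to be the main obstacle: property (5) as stated concerns pushforward along a fibre bundle pulled back along a proper map, whereas here it is the pushforward along the non-fibre-bundle map $a$ that must commute with pullback along the fibre-bundle projection $p$, so some care with the embedding factorisation and the Real spin$^c$-type bookkeeping is required. The remaining manipulations are formal consequences of the pushforward axioms, after which $ind_R(D) = \widehat p_*\big([\mathcal P]\cdot p^*(\alpha)\big) = \Phi(\alpha)$.
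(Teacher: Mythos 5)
Your proof is the same as the paper's: factor $\pi_*$ as $\widehat{p}_*\circ(a\times\mathrm{id}_{T_X})_*$ by composition, apply the projection formula to pull $\mathcal{P}$ out, and reduce to the base-change identity $(a\times\mathrm{id}_{T_X})_*(1)=p^*(a_*(1))$, which is exactly the chain of equalities written in the text. The only difference is that you explicitly flag that this last identity does not literally instance property (5) of Section \ref{sec:tkr} --- there the pushforward is along the fibre-bundle leg of the Cartesian square, whereas here one must push forward along $a\times\mathrm{id}_{T_X}$ --- and you sketch the standard repair (factor $a$ through an equivariant closed embedding into a Euclidean bundle over $A_X$, use naturality of the Thom class and of extension by zero under pullback for the embedding leg and property (5) for the bundle-projection leg); the paper asserts the base-change equality without remarking on this point.
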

\begin{proof}
Let $p_1 : A_X \times T_X \to A_X$ and $p_2 : A_X \times T_X \to T_X$ be the projections. Then $\Phi( x ) = (p_2)_*( \mathcal{P} \otimes p_1^*(x) )$. Using the commutative diagram
\[
\xymatrix{
X \times T_X \ar[rr]^-{a \times id_{T_X}} \ar[d]^-{\pi} & & A_X \times T_X \ar[d]^-{p_2} \\
T_X \ar@{=}[rr] & & T_X
}
\]
we deduce that
\begin{align*}
ind_R(D) &= \pi_*( \mathcal{P}_X ) \\
&= (p_2)* \circ (a \times id_{T_X})_* ( (a \times id_{T_X})^*(\mathcal{P}) ) \\
&= (p_2)_*( \mathcal{P} \otimes (a \times id_{T_X})_*(1) ) \\
&= (p_2)_*( \mathcal{P} \otimes (p_1)^*( a_*(1) ) ) \\
&= \Phi( a_*(1) ).
\end{align*}

\end{proof}

\begin{remark}
In the course of proving Proposition \ref{prop:realdir} we needed to assume that $\sigma_X$ has a fixed point, so that the Albanese map $a : X \to A_X$ was equivariant in the sense that $a \circ \sigma_X = \sigma_{A_X} \circ a$. If $\sigma_X$ does not have fixed points, then we can still define an involution $\sigma'_{A_X}$ on $A_X$ for which $a$ is equivariant. This involution will have the form $\sigma'_{A_X}(t) = \sigma_{A_X}(t) + u$, where $u \in A_{X}$ is given by $u([\lambda]) = \int_{\gamma} \lambda$ and $\gamma$ is a path from $\sigma(x_0)$ to $x_0$. From the point of view of Real affine torus bundles, this means $(A_X , \sigma_X)$ carries a non-trivial Chern class. We still have a Fourier--Mukai transform, but the $T$-dual of $(A_X , \sigma'_{A_X})$ will be of the form $( T_X , \sigma_{T_X} , \mathcal{G})$ for some non-trivial Real gerbe on $T_X$. We can still define a Real index $ind_R(D) = \Phi( a_*(1) )$ which is a lift of $ind(D)$, but now $ind_R(D)$ will be valued in a twisted $KR$-theory group $KR^*( T_X , \mathcal{G})$.
\end{remark}


\bibliographystyle{amsplain}

\begin{thebibliography}{99}
\bibitem{as}M. Atiyah, G. Segal, Twisted K-theory. {\em Ukr. Math. Bull.} {\bf 1} (2004), no. 3, 291-334.
\bibitem{bar4}D. Baraglia, Conformal Courant algebroids and orientifold T-duality. {\em Int. J. Geom. Methods Mod. Phys.} {\bf 10} (2013), no. 2, 1250084, 35 pp. 
\bibitem{bar1}D. Baraglia, Topological T-duality for general circle bundles, {\em Pure Appl. Math. Q.} {\bf 10} (3) (2014) 367-438.
\bibitem{bar3}D. Baraglia, A coboundary morphism for the Grothendieck spectral sequence, {\em Appl. Categ. Structures} {\bf 22} (2014) 269-288.
\bibitem{bar2}D. Baraglia, Topological T-duality for torus bundles with monodromy. {\em Rev. Math. Phys.} {\bf 27} (2015), no. 3, 1550008, 55 pp. 
\bibitem{bar4}D. Baraglia, The mod 2 Seiberg--Witten invariants of spin structures and spin families. arXiv:2303.06883 (2023).
\bibitem{bar5}D. Baraglia, Exotic embedded surfaces and involutions from Real Seiberg--Witten theory. arXiv:2504.00281 (2025).
\bibitem{bem}P. Bouwknegt, J. Evslin and V. Mathai, T-duality: Topology change from H-flux, {\em Comm. Math. Phys.} {\bf 249} (2) (2004) 383-415. 
\bibitem{bhm}P. Bouwknegt, K. Hannabuss and V. Mathai, T-duality for principal torus bundles and dimensionally reduced Gysin sequences, {\em Adv. Theor. Math. Phys.} {\bf 9} (5) (2005) 749-773.
\bibitem{bm}P. Bouwknegt, V. Mathai, D-branes, B-fields and twisted K-theory. {\em J. High Energy Phys.} (2000), no. 3, Paper 7, 11 pp. 
\bibitem{bs}U. Bunke and T. Schick, On the topology of T-duality, {\em Rev. Math. Phys.} {\bf 17} (1) (2005) 77-112.
\bibitem{brs}U. Bunke, P. Rumpf and T. Schick, The topology of T-duality for $T^n$-bundles, {\em Rev. Math. Phys.} {\bf 18} (10) (2006) 1103-1154.
\bibitem{ddr1}C. Doran, S. M\'endez-Diez, J. Rosenberg, T-duality for orientifolds and twisted KR-theory, {\em Lett. Math. Phys.} {\bf 104} (2014), no. 11, 1333-1364.
\bibitem{ddr2}C. Doran, S. M\'endez-Diez, J. Rosenberg, String theory on elliptic curve orientifolds and KR-theory, {\em Commun. Math. Phys.} {\bf 335} (2015), no. 2, 955-1001.
\bibitem{fht1}D. S. Freed, M. J. Hopkins, C. Teleman, Loop groups and twisted K-theory I. {\em J. Topol.} {\bf 4} (2011), no. 4, 737-798. 
\bibitem{mm}R. Minasian, G. Moore, K-theory and Ramond-Ramond charge. {\em J. High Energy Phys.} (1997), no. 11, Paper 2, 7 pp. 
\bibitem{fm}D. S. Freed, G. W. Moore, Twisted equivariant matter. {\em Ann. Henri Poincar\'e} {\bf 14} (2013), no. 8, 1927-2023.
\bibitem{fok}C.-K. Fok, Equivariant twisted Real K-theory of compact Lie groups, {\em J. Geom. Phys.} {\bf 124} (2018), 325-349.
\bibitem{fk}M. Furuta, Y. Kametani, Equivariant maps between sphere bundles over tori and KO-degree, arXiv:math/0502511 (2005).
\bibitem{gom}K. Gomi, Freed-Moore K-theory. {\em Comm. Anal. Geom.} {\bf 31} (2023), no. 4, 979-1067.
\bibitem{guk}S. Gukov, K-theory, reality, and orientifolds. {\em Comm. Math. Phys.} {\bf 210} (2000), no. 3, 621-639. 
\bibitem{hmsv}P. Hekmati, M. K. Murray, R. J. Szabo, R. F. Vozzo, Real bundle gerbes, orientifolds and twisted KR-homology. {\em Adv. Theor. Math. Phys.} {\bf 23} (2019), no. 8, 2093-2159. 
\bibitem{hor}K. Hori, D-branes, T-duality, and index theory. {\em Adv. Theor. Math. Phys.} {\bf 3} (1999), no. 2, 281-342. 
\bibitem{mur}M. K. Murray, Bundle gerbes, {\em J. London Math. Soc.} {\bf 54} (2) (1996) 403-416.
\bibitem{ste}D. Stevenson, The geometry of bundle gerbes, PhD thesis, University of Adelaide (2000), arXiv:math/0004117.
\bibitem{wit}E. Witten, D-branes and K-theory. {\em J. High Energy Phys.} (1998), no. 12, Paper 19, 41 pp.
\end{thebibliography}

\end{document}